\setlist[itemize]{noitemsep,nolistsep}
\newcommand*{\da@rightarrow}{\mathchar"0\hexnumber@\symAMSa 4B }
\newcommand*{\da@leftarrow}{\mathchar"0\hexnumber@\symAMSa 4C }
\newcommand*{\xdashrightarrow}[2][]{%
  \mathrel{%
    \mathpalette{\da@xarrow{#1}{#2}{}\da@rightarrow{\,}{}}{}%
  }%
}
\newcommand{\xdashleftarrow}[2][]{%
  \mathrel{%
    \mathpalette{\da@xarrow{#1}{#2}\da@leftarrow{}{}{\,}}{}%
  }%
}
\newcommand*{\da@xarrow}[7]{%
  \sbox0{$\ifx#7\scriptstyle\scriptscriptstyle\else\scriptstyle\fi#5#1#6\m@th$}%
  \sbox2{$\ifx#7\scriptstyle\scriptscriptstyle\else\scriptstyle\fi#5#2#6\m@th$}%
  \sbox4{$#7\dabar@\m@th$}%
  \dimen@=\wd0 %
  \ifdim\wd2 >\dimen@
    \dimen@=\wd2 %
  \fi
  \count@=2 %
  \def\da@bars{\dabar@\dabar@}%
  \@whiledim\count@\wd4<\dimen@\do{%
    \advance\count@\@ne
    \expandafter\def\expandafter\da@bars\expandafter{%
      \da@bars
      \dabar@ 
    }%
  }%
  \mathrel{#3}%
  \mathrel{%
    \mathop{\da@bars}\limits
    \ifx\\#1\\%
    \else
      _{\copy0}%
    \fi
    \ifx\\#2\\%
    \else
      ^{\copy2}%
    \fi
  }%
  \mathrel{#4}%
}
\newcommand{\nc}{\newcommand}
\nc{\sA}{{\mathsf{A}}}
\nc{\sB}{{\mathsf{B}}}
\nc{\sC}{{\mathsf{C}}}
\nc{\sD}{{\mathsf{D}}}
\nc{\sE}{{\mathsf{E}}}
\nc{\sF}{{\mathsf{F}}}
\nc{\sG}{{\mathsf{G}}}
\nc{\sH}{{\mathsf{H}}}
\nc{\sI}{{\mathsf{I}}}
\nc{\sJ}{{\mathsf{J}}}
\nc{\sK}{{\mathsf{K}}}
\nc{\sL}{{\mathsf{L}}}
\nc{\sM}{{\mathsf{M}}}
\nc{\sN}{{\mathsf{N}}}
\nc{\sO}{{\mathsf{O}}}
\nc{\sP}{{\mathsf{P}}}
\nc{\sQ}{{\mathsf{Q}}}
\nc{\sR}{{\mathsf{R}}}
\nc{\sS}{{\mathsf{S}}}
\nc{\sT}{{\mathsf{T}}}
\nc{\sU}{{\mathsf{U}}}
\nc{\sV}{{\mathsf{V}}}
\nc{\sW}{{\mathsf{W}}}
\nc{\sX}{{\mathsf{X}}}
\nc{\sY}{{\mathsf{Y}}}
\nc{\sZ}{{\mathsf{Z}}}
\def\cD{\mathscr{D}}
\def\cO{\mathscr{O}}
\def\cP{\mathscr{P}}
\def\cH{\mathscr{H}}
\def\cS{\mathscr{S}}
\def\cC{\mathscr{C}}
\def\cM{\mathscr{M}}
\def\cU{\mathscr{U}}
\def\cV{\mathscr{V}}
\def\cX{\mathscr{X}}
\def\cY{\mathscr{Y}}
\def\cZ{\mathscr{Z}}
\def\bv{\ensuremath{\mathbf v}}
\def\bp{\mathbf p}
\def\Z{{\bf Z}}
\def\C{{\bf C}}
\def\R{{\bf R}}
\def\P{{\bf P}}
\def\PP{{\bf P}}
\def\phi{\varphi}
\DeclareMathOperator{\Bl}{Bl}
\DeclareMathOperator{\Cl}{Cl}
\DeclareMathOperator{\disc}{disc}
\def\div{\mathop{\rm div}\nolimits}
\DeclareMathOperator{\GL}{GL}
\DeclareMathOperator{\Gr}{Gr}
\DeclareMathOperator{\Hom}{Hom}
\def\Im{\mathop{\rm Im}\nolimits}
\DeclareMathOperator{\KKK}{K3}
\DeclareMathOperator{\LG}{LG}
\DeclareMathOperator{\lin}{\underset{\mathrm lin}{\equiv}}
\DeclareMathOperator{\Pic}{Pic}
\DeclareMathOperator{\rk}{rk}
\DeclareMathOperator{\Spec}{Spec}
\DeclareMathOperator{\rank}{rank}
\DeclareMathOperator{\Sym}{Sym}
\def\eps{\varepsilon}
\def\vide{\varnothing}
\def\setminus{\smallsetminus}
\def\isom{\simeq}
\def\lra{\longrightarrow}
\def\llra{\hbox to 10mm{\rightarrowfill}}
\def\lllra{\hbox to 15mm{\rightarrowfill}}
\def\llla{\hbox to 10mm{\leftarrowfill}}
\def\lllla{\hbox to 15mm{\leftarrowfill}}
\def\dra{\dashrightarrow}
\def\lhra{\ensuremath{\lhook\joinrel\relbar\joinrel\to}}
\DeclareMathOperator{\isomto}{\stackrel{{}_{\scriptstyle\sim}}{\to}}
\def\hk{hyper-K\"ahler}
\newtheorem{lemm}{Lemma}[section]
\newtheorem{theo}[lemm]{Theorem}
\newtheorem{coro}[lemm]{Corollary}
\newtheorem{prop}[lemm]{Proposition}
\theoremstyle{remark}
\newtheorem{defi}[lemm]{Definition}
\newtheorem{rema}[lemm]{Remark}
\def\@tocline#1#2#3#4#5#6#7{\relax
  \ifnum #1>\c@tocdepth 
  \else
    \par \addpenalty\@secpenalty\addvspace{#2}%
    \begingroup \hyphenpenalty\@M
    \@ifempty{#4}{%
      \@tempdima\csname r@tocindent\number#1\endcsname\relax
    }{%
      \@tempdima#4\relax
    }%
    \parindent\z@ \leftskip#3\relax \advance\leftskip\@tempdima\relax
    \rightskip\@pnumwidth plus4em \parfillskip-\@pnumwidth
    #5\leavevmode\hskip-\@tempdima
      \ifcase #1
       \or\or \hskip 1em \or \hskip 2em \else \hskip 3em \fi%
      #6\nobreak\relax
    \dotfill\hbox to\@pnumwidth{\@tocpagenum{#7}}\par
    \nobreak
    \endgroup
  \fi}
\def\bw#1#2{\textstyle{\bigwedge\hskip-0.9mm^{#1}}\hskip0.2mm{#2}}
\def\id{\mathsf{id}}
\title{On the geometry of singular EPW cubes}
\begin{document}

\author[F.~Rizzo]{Francesca Rizzo}
\address{Universit\'e   Paris Cit\'e and Sorbonne Universit\'e, CNRS,   IMJ-PRG, F-75013 Paris, France}
 \email{{\tt francesca.rizzo@imj-prg.fr}}

 \subjclass[2020]{14J10, 14J42}
\thanks{This project has received funding from the European
Research Council (ERC) under the European
Union's Horizon 2020 research and innovation
programme (Project HyperK --- grant agreement 854361).
}

\date{\today}

\begin{abstract}
 {EPW cubes form a locally complete family of smooth projective \hk\  varieties of dimension 6, constructed by Iliev--Kapustka--Kapustka--Ranestad.\ Their construction and behavior share a lot of similarities with the double EPW sextics constructed by O'Grady.\ 
Adapting the methods of O'Grady, we construct a projective smooth small resolution of singular EPW cubes.}
\end{abstract}

\maketitle

  \tableofcontents

\section{Introduction}

Very few geometric constructions of locally complete families of smooth projective \hk\ varieties are known.\ One of these families is that of so-called  EPW cubes, constructed by Iliev--Kapustka--Kapustka--Ranestad in~\cite{ikkr}.\ Their construction goes as follows.

Let $V_6$ be   a $6$-dimensional complex vector space $V_6$.\ We endow $\bw3V_6$  with the conformal symplectic structure given by wedge product and we
 denote by $\LG(\bw3V_6)$ the  Lagrangian Grassmannian that parametrizes Lagrangian subspaces   $A\subset \bw3V_6$.\  For any $[A]\in \LG(\bw3V_6)$ and nonnegative integer $k$, the authors of \cite{ikkr} define  the degeneracy loci
 \begin{equation*}\label{eqn:EPwQuartStrat}
\sZ^{\ge k}_A \coloneqq \{[U_3]\in \Gr(3, V_6) \mid \dim(A\cap (\bw2U_3 \wedge V_6))\ge k\}, \quad \sZ^k_A \coloneqq \sZ^{\ge k}_A \setminus \sZ^{\ge k+1}_A
\end{equation*}
in the Grassmannian $\Gr(3, V_6)  $.\ 

When $A$ has no decomposable vectors (this means that the intersection $\P(A)\cap \Gr(3,V_6)$, inside $\P(\bw3V_6)$, is empty, and happens exactly when $[A]$ is outside  an irreducible divisor $\Sigma\subset \LG(\bw3V_6)$), the scheme $\sZ^{\ge 2}_A$ is an integral normal sixfold whose singular locus is the threefold~$\sZ^{\ge 3}_A$.\ Moreover, the locus $\sZ^{\ge 4}_A$ is finite, and it is empty for $[A]$ outside  another irreducible divisor $\Gamma\subset \LG(\bw3V_6)$. 

In \cite[Theorem~5.7]{dk2}, when $[A]\notin\Sigma$ (so that $A$ has no decomposable vectors), the authors constructed a double cover
\begin{equation}\label{eqn:EPWcube}
    g\colon \widetilde \sZ^{\ge 2}_A \lra \sZ^{\ge 2}_A
\end{equation}
 branched over $\sZ^{\ge 3}_A$.\ 
When moreover $[A]\notin\Gamma$ (namely when $\sZ^{\ge 4}_A = \varnothing$), the authors of   \cite{ikkr} prove that {\em the scheme $\widetilde \sZ^{\ge 2}_A$ is a smooth \hk\ variety of $\KKK^{[3]}$-type with a polarization of square $4$ and divisibility $2$,} called an EPW cube.\ This is an important result because, as explained above, there are very few explicit constructions of this type.\ It justifies a more extensive study of this construction.

{The aim of this paper is to study the singular sixfold $\widetilde \sZ^{\ge 2}_A$ when $[A]\in \Gamma\smallsetminus\Sigma$.\ Adapting the methods of \cite{og} (where the author worked with double EPW sextics), we show the following:
\begin{enumerate}
    \item[(\rm{a})] The scheme $\sZ^{\ge 4}_A=\{z_1,\dots,z_r\}$ is  finite and smooth, and equals $\sZ^{4}_A$; moreover, if $[A]$ is general in $\Gamma$, then $\sZ^{4}_A$ is a singleton ($r=1$).
    \item[(\rm{b})] The singular set of the sixfold $\widetilde \sZ^{\ge 2}_A$ is equal to $g^{-1}(\sZ^{4}_A)=\{g^{-1}(z_1),\dots,g^{-1}(z_r)\}$.\ For each $i\in \{1,\dots,r\}$, the tangent cone to $\widetilde \sZ^{\ge 2}_A$ at the point $g^{-1}(z_i)$ is isomorphic to the (affine) cone over the incidence variety $I\subset \P^3\times (\P^3)^\vee$.\ Therefore, the exceptional divisor  of the blowup  $\widetilde X_A\to \widetilde \sZ^{\ge 2}_A$ along $g^{-1}(\sZ^{4}_A)$ is the disjoint union of smooth divisors $E_1,\dots,E_r$, all isomorphic to $I$ and with normal bundles $\cO_{E_i}(-1,-1)$.\ For any choice~$\eps = (\eps_1, \dots, \eps_r)$ of contractions  of each~$E_i$ onto either $\P^3$ or $(\P^3)^\vee$, we obtain an (analytic) small resolution $\sX_A^\eps\to \widetilde \sZ^{\ge 2}_A$ with exceptional locus a disjoint union of $r$ copies of $\P^3$.
\item[(\rm{c})] There exists a choice of $\eps$ such that $\sX^\eps_A$ is 
 a projective smooth  quasi-polarized \hk\ sixfold   with a projective contraction $\sX^\eps_A\to\widetilde \sZ^{\ge 2}_A$ of $r$ copies of $\P^3$.\end{enumerate}
}

Concerning Item~(a), the finiteness and smoothness of $\sZ^{\ge 4}_A$ 
are proved in Theorem~\ref{theorem:ikkr} by generalizing the argument of \cite[Lemma~2.8]{ikkr}.\ 
  In Section~\ref{sec:ParamCount}, following \cite[Section~2.1]{og}, we show that~$\sZ^{\ge 4}_A$ consists of a single point for $[A]$ general in  $\Gamma$.
{Regarding Item~(b), we show in Proposition~\ref{prop:LocalStructureZ2} and Lemma~\ref{lem:GeneralHyper}  that, analytically, the singularity of $\sZ^{\ge 2}_A$ at a point of $\sZ^4_A$ is the cone over a general hyperplane section, in the space of ($4\times 4$)-matrices,  of the subscheme $\cS_{\le 2}$ of symmetric matrices of rank~$\le 2$.\ By the uniqueness of the double covers of   degeneracy loci constructed in   \cite[Theorem~3.1]{dk2}, the singularity of~$\widetilde \sZ^{\ge 2}_A$ at a point of $g^{-1}(\sZ^4_A)$ is the singularity of the ``universal double cover'' of this general hyperplane section of~$\cS_{\le 2}$, studied in details in Section~\ref{sec:CanonicalDoubleCover} (in other words, it is the cone over the incidence variety $I$ mentioned above).\

The construction of the small analytic  resolutions   $\sX_A^\eps\to\widetilde \sZ^{\ge 2}_A$ is analogous to the one in~\cite[Sections~3.1 and 3.2]{og}.\ We do it in Section~\ref{se4}.}

The most subtle part of our results is Item~(c), that is,  proving that there exists a small \emph{projective}  resolution of $\widetilde \sZ^{\ge 2}_A$ (as in the double EPW sextic case, if $\sZ^4_A$ has more than one point,   we do not expect   the small resolutions $\sX_A^\eps\to \widetilde \sZ^{\ge 2}_A$ to be all projective -- that is, K\"ahler).\ For   double EPW sextics, this was done by O'Grady  by explicitly constructing  a projective $\KKK$ surface $S_A$ and (under a mild generality hypothesis) an isomorphism between the Hilbert square $S_A^{[2]}$ and one of these analytic resolutions.\ We cannot hope for an analogous construction  in our case, because there are no associated K3 surfaces, and $\sY_A$ cannot be realized, even birationally, as a moduli space of sheaves over a K3 surface (we prove this in Proposition~\ref{prop:NoAssociatedK3}).

However, using the properties of the period map for EPW cubes and the surjectivity of the period map for \hk\ sixfolds,  we produce in Proposition~\ref{prop:ProjSmallRes} a smooth projective \hk\ sixfold $\sY_A$ with a big and nef line bundle $H$ that induces a small contraction $\sY_A\to \widetilde\sZ^{\ge 2}_A$; the pair $(\sY_A,H)$ is a deformation of standard (smooth) EPW cubes.\ Finally, using results from~\cite{kp23}, we show in Theorem~\ref{theo:projContraction} that $\sY_A$ is actually a divisorial contraction of the blowup~$\widetilde \sX_A$, hence one of the  small analytic resolutions $\sX_A^\eps $.

 {In Appendix~\ref{appendix}, we describe the Heegner divisors in the period domain of \hk\ varieties of $\KKK^{[3]}$-type with polarization of square 4 and divisibility 2, and show that the   divisors 
 $\Gamma$ and $\Sigma$ of $\LG(\bw3V_6)$ map onto   distinct Heegner divisors.}

\subsection*{Acknowledgement} I would like to express my deep gratitude to my advisor, Olivier Debarre, for suggesting   this problem to me and for his guidance.\  
I thank Alexander Kuznetsov and Claire Voisin for their help with Theorem~\ref{theo:projContraction}.\  
I also thank Pietro Beri, Francesco Denisi, Franco Giovenzana, Alexander Kuznetsov, and Rita Pardini for useful discussions. 

\section{The degeneracy loci $\sZ_A^{\ge k}$}
The following description of the degeneracy loci $\sZ_A^{\ge k} \subset \Gr(3,V_6)$ associated with a Lagrangian subspace $A\subset \bw3V_6$ with no decomposable vectors, namely vectors of the form $v_1\wedge v_2\wedge v_3$, is stated in \cite[Theorem~5.6]{dk2}.

\begin{theo}\label{theorem:ikkr}
If a Lagrangian subspace $A\subset \bw3V_6$ has no decomposable vectors, the following properties hold. 
\begin{itemize}
\item[\textnormal{(a)}] $\sZ_A^{\ge 1} $ is an integral normal quartic hypersurface in $\Gr(3,V_6)$.
\item[\textnormal{(b)}] $\sZ_A^{\ge 2}$ is the singular locus of $\sZ_A^{\ge 1} $; 
it is an integral normal Cohen--Macaulay sixfold of degree~$480$.
\item[\textnormal{(c)}] $\sZ_A^{\ge 3}$ is the singular locus of $\sZ_A^{\ge 2} $; 
it is an integral normal Cohen--Macaulay threefold of degree~$4944$.
\item[\textnormal{(d)}] $\sZ_A^{\ge 4}$ is the singular locus of  $\sZ_A^{\ge 3}$; 
{it} is finite and smooth, and is empty for $A$ general.
\item[\textnormal{(e)}] $\sZ_A^{\ge 5}$ is empty.
\end{itemize}
\end{theo}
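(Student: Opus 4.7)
The strategy is to defer most of the statement to \cite[Theorem~5.6]{dk2}, which already establishes (a), (b), (c), (e) and the identification $\sZ_A^{\ge 4} = \Sing(\sZ_A^{\ge 3})$ in (d). The new content is the finiteness, smoothness, and generic emptiness of $\sZ_A^{\ge 4}$, and I would prove these by adapting \cite[Lemma~2.8]{ikkr} from the corank-3 case to the corank-4 case.

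Recall that $\iota(U_3) := \bigwedge^2 U_3 \wedge V_6 \subset \bigwedge^3 V_6$ is a $10$-dimensional Lagrangian subspace, and these fibers assemble into a rank-$10$ subbundle $\cT \subset \bigwedge^3 V_6 \otimes \cO$ on $\Gr(3, V_6)$; the locus $\sZ_A^{\ge k}$ is then the corank-$k$ locus of the induced bundle map $\phi_A \colon A \otimes \cO \to (\bigwedge^3 V_6 \otimes \cO)/\cT$. At a point $[U_3] \in \sZ_A^{\ge 4}$, set $K := A \cap \iota(U_3)$, so that $\dim K \ge 4$. By the standard formula for the tangent space to a degeneracy locus,
\[
T_{[U_3]} \sZ_A^{\ge 4} \;\subset\; T_{[U_3]} \Gr(3, V_6) = \Hom(U_3, V_6/U_3)
\]
is the kernel of a linear map built from the derivatives of $\phi_A$ along directions in $K$.

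The heart of the proof is to show that this tangent map has trivial kernel, which gives finiteness and smoothness simultaneously. Concretely, for each $\omega \in K$ one writes the condition that, under a deformation of $U_3$ in a direction $\phi \in \Hom(U_3, V_6/U_3)$, the class of $\omega$ modulo the moving space $\iota(U_3(t))$ stays in the image of $A$ to first order; this is a linear equation on $\phi$ with coefficients in $\bigwedge^3 V_6/\iota(U_3)$, and varying $\omega$ through a basis of $K$ produces a system of at least four such equations. Combining the Lagrangian property of $A$ (which forces the pairings $\omega_i \wedge \omega_j \in \bigwedge^6 V_6$ to vanish) with the no-decomposable-vector hypothesis (which prevents any $\omega_i$ from being proportional to a wedge of three vectors of $U_3$, and controls the tensor rank of the image of $K$ in $U_3^\vee \otimes (V_6/U_3) \otimes \det U_3$) forces the only simultaneous solution to be $\phi = 0$. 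The main obstacle is the careful linear-algebra bookkeeping needed to convert these two abstract inputs into a concrete non-degeneracy statement; the corank-3 argument in \cite[Lemma~2.8]{ikkr} supplies the template, but one has to handle an additional equation and rule out new degenerate configurations that appear only once $\dim K \ge 4$.

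For the generic emptiness, a dimension count on the incidence correspondence
\[
\mathcal{I} := \bigl\{([A], [U_3]) \in \LG(\bigwedge^3 V_6) \times \Gr(3, V_6) : \dim(A \cap \iota(U_3)) \ge 4\bigr\}
\]
suffices. For fixed $[U_3]$, the Lagrangians $A$ meeting $\iota(U_3)$ in at least a $4$-dimensional subspace form a family of dimension at most $\dim \Gr(4, 10) + \binom{7}{2} = 24 + 21 = 45$, so $\dim \mathcal{I} \le 9 + 45 = 54 < 55 = \dim \LG(\bigwedge^3 V_6)$. Hence the projection to $\LG(\bigwedge^3 V_6)$ is not dominant, and a general $[A]$ yields $\sZ_A^{\ge 4} = \emptyset$.
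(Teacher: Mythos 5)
Your overall plan — defer (a)--(c) to the literature and reduce (d) to a tangent-space calculation in the spirit of \cite[Lemma~2.8]{ikkr} — is indeed the route the paper takes, and your dimension count for generic emptiness (an incidence variety $\mathcal{I}$ of dimension at most $9+24+21 = 54 < 55 = \dim\LG(\bw3V_6)$) is essentially identical to what the paper does in Proposition~\ref{prop:gamma}.\ Two issues, however.

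First, a minor misattribution: the paper explicitly points out that \cite[Theorem~5.6]{dk2} states (e) but the proof traced back to \cite{ikkr} only treats corank $\le 3$, so (e) also needs a proof.\ The paper supplies a one-paragraph argument: if $\dim(A\cap T_{U_0})\ge 5$, the $\P^4 = \P(A\cap T_{U_0})$ and the $5$-dimensional cone $\cC_{U_0}$ sit inside the same $\P^9$, so they must meet, contradicting the absence of decomposable vectors.\ Your proposal silently skips this.

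Second, and more substantially, the crux of (d) is exactly the step you compress into \emph{``careful linear-algebra bookkeeping ... rule out new degenerate configurations that appear only once $\dim K\ge 4$.''}\ In the paper this is the genuinely new content and it is nontrivial: after translating to the restriction map $r_K\colon H^0(\P(T_{U_0}), I_{\cC_{U_0}}(2))\to H^0(\P(K),\cO(2))$ from \eqref{defrk}, the injectivity needed to conclude smoothness and finiteness requires showing that no quadric in $|I_{\cM_1}(2)|\subset\P(\Hom(U_0,U_\infty))$ can contain a $\P^3$ disjoint from the Segre fourfold $\cM_1$.\ The paper proves this by classifying, up to the $\GL(U_0)\times\GL(U_\infty)$-action, the three orbits (ranks $9$, $6$, $4$) of quadrics containing $\cM_1$, and running a separate geometric argument in each case (Lefschetz theorem for the smooth quadric; analysis of the two rulings for the rank-$6$ cone; projection to $\P^2$ for the rank-$4$ cone).\ It then needs Lemma~\ref{lem:GeneralHyper}, showing the image hyperplane is general, via the birational involution $\Phi$ induced by the adjugate; this is needed later in Proposition~\ref{prop:LocalStructureZ2}.\ None of this is supplied or even sketched in your proposal; you state the conclusion (``forces the only simultaneous solution to be $\phi=0$'') but the argument is the whole point.\ As written, your (d) is a plan rather than a proof.
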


 However, in the reference given in \cite{dk2} for this result  (\cite[Proposition~2.6 and Corollary~2.10]{ikkr}),   the smoothness of $\sZ_A^{k}$ is only studied for $k\in\{1,2,3\}$, so (d) and (e) need to be proven.\ This is the aim of this section.

 We fix some notation.\ Let $[U_0]$ be a point of $\Gr(3, V_6)$.\ We set
  \begin{equation}\label{deftu}
     T_{U_0}\coloneqq \bw2 U_0\wedge V_6.
 \end{equation}
  This is a (10-dimensional) Lagrangian subspace of $\bw3 V_6$ and the projective tangent space to~$\Gr(3, V_6)$ at  $[U_0]$ is $\P(T_{U_0})\subset \P(\bw3 V_6)$.\ If   we choose a subspace $U_\infty\subset V_6$ complementary to~$U_0$, then $T_{U_0}$ decomposes as $\bw3{U_0}\oplus\Hom (U_0, U_\infty)$ and the Zariski tangent space to $\Gr(3, V_6)$ at  $[U_0]$ is isomorphic to $\Hom (U_0, U_\infty)$.
  
 We also set
 $$\cC_{U_0}\coloneqq \PP(T_{U_0})\cap \Gr(3, V_6),$$
 the intersection of $\PP(T_{U_0})$ with the space of decomposable vectors of $\LG(\bw3V_6)$.
 This is a 5-dimensional cone with vertex $ [\bw3{U_0}]$ over the smooth Segre fourfold $$\cM_1\simeq\P(U_0^\vee)\times \P(U_\infty)\subset \P(\Hom(U_0, U_\infty))$$  of morphisms  of rank $1$.

 \begin{proof}[Proof (of Theorem~\ref{theorem:ikkr}(e))]

 This is very easy to prove.\ Assume $[U_0]$ is in  $\sZ_A^{\ge 5}$, so that $A\cap T_{U_0}$ has dimension at least $5$.\ 
Then, $\P(A\cap T_{U_0})$ has dimension at least~$4$, hence must meet the fivefold $\cC_{U_0}$ in  $\P(T_{U_0})=\P^9$, which contradicts the fact that $A$ has no decomposable vectors.\ Therefore,  $\sZ_A^{\ge 5}$ is empty.
 
 It remains to prove Theorem~\ref{theorem:ikkr}(d).\ This will be done in Section~\ref{localde}.\ Before that, we prove   general preliminary results that will also be used later on in this paper.\ 
 \end{proof}
 \subsection{Tangent space to $\Gr(3, V_6)$}\label{subsec:TgGr36}
 In this section, we study the map  $T\colon \Gr(3, V_6)\to \LG(\bw3V_6)$ that takes a point $[U]$ to the Lagrangian subspace $[T_U]$.



\begin{prop}\label{prop:tgtGr36}
The tangent map to $T$ at a point $[U_0] $ gives   an identification 
$$
\Theta\colon T_{\Gr(3, V_6), [U_0]} \stackrel{\sim}{\lra} H^0(\P(T_{U_0}), I_{\cC_{U_0}}(2))
$$
of $9$-dimensional vector spaces, where $I_{\cC_{U_0}}$ is the ideal sheaf of the subscheme $\cC_{U_0}\subset \P(T_{U_0})$.
\end{prop}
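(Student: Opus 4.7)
My plan is to compute $\Theta$ explicitly via a first-order expansion of $T$, verify by direct calculation that the resulting quadrics vanish on $\cC_{U_0}$, and conclude using an equivariance argument for injectivity together with a dimension count on the target. Recall the canonical identification $T_{[L]}\LG(W)\cong \Sym^2 L^*$ for a Lagrangian $L$ in a symplectic space $W$, which embeds $T_{[L]}\LG(W)$ into $H^0(\P(L),\cO(2))$; hence $dT_{[U_0]}$ tautologically gives a map $\Theta\colon T_{[U_0]}\Gr(3,V_6)\to H^0(\P(T_{U_0}),\cO(2))$, and what must be shown is that the image lies in $H^0(\P(T_{U_0}),I_{\cC_{U_0}}(2))$, that $\Theta$ is injective, and that the target has dimension $9$. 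Fix a complement $U_\infty\subset V_6$ to $U_0$; this gives the Lagrangian decomposition $\bw3 V_6=T_{U_0}\oplus L'$ with $L'\coloneqq \bw3 U_\infty\oplus(U_0\wedge\bw2 U_\infty)$, and identifies $T_{[U_0]}\Gr(3,V_6)$ with $\Hom(U_0,U_\infty)$. For $\phi\in\Hom(U_0,U_\infty)$, the curve $U_t=(\id+t\phi)(U_0)$ realizes $T_{U_t}$ to first order as the graph of a linear map $\alpha_\phi\colon T_{U_0}\to L'$, and the symmetry of $B_\phi(x,y)\coloneqq\alpha_\phi(x)\wedge y$ (forced by the Lagrangian condition) means that $\Theta(\phi)$ is the quadric $q_\phi(x)=\alpha_\phi(x)\wedge x\in\bw6 V_6\cong\C$.

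Expanding the natural spanning set $\{u_1(t)\wedge u_2(t)\wedge u_3(t)\}\cup\{u_i(t)\wedge u_j(t)\wedge f_k\}$ of $T_{U_t}$ to first order in $t$ and projecting onto $L'$ gives the formulas
\[
\alpha_\phi|_{\bw3 U_0}=0,\qquad \alpha_\phi(\omega\wedge g)=(\phi\cdot\omega)\wedge g\quad\text{for }\omega\in\bw2 U_0,\ g\in U_\infty,
\]
where $\phi\cdot$ denotes the natural derivation extension of $\phi$ to $\bw\bullet U_0$; the first vanishing just records that the Pl\"ucker point of $U_t$ already lies in the projective tangent space $\P(T_{U_0})$ to first order. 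The vanishing on $\cC_{U_0}$ is then essentially free: every decomposable trivector in $T_{U_0}=\bw3 U_0\oplus(\bw2 U_0\wedge U_\infty)$ can be written as $x=v+\omega\wedge g$ with $v\in\bw3 U_0$, $\omega\in\bw2 U_0$, $g\in U_\infty$, and
\[
q_\phi(x)=(\phi\cdot\omega)\wedge g\wedge(v+\omega\wedge g)=0,
\]
because $(\phi\cdot\omega)\wedge v\in\bw4 U_0\wedge U_\infty=0$ and $g\wedge g=0$; hence $\Theta(\phi)\in H^0(\P(T_{U_0}),I_{\cC_{U_0}}(2))$.

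It remains to check injectivity and to compute the dimension of the target. For the target: $\cC_{U_0}\subset\P(T_{U_0})\cong\P^9$ is the cone with vertex $[\bw3 U_0]$ over the nondegenerate Segre embedding $\cM_1\cong\P^2\times\P^2\hookrightarrow\P^8=\P(\bw2 U_0\wedge U_\infty)$, so restriction identifies quadrics on $\P^9$ vanishing on $\cC_{U_0}$ with quadrics on $\P^8$ vanishing on $\cM_1$; the latter form the degree-$2$ piece of the Segre ideal, spanned by the $9$ linearly independent $2\times 2$ minors of a generic $3\times 3$ matrix. For injectivity: $\Theta$ is equivariant for the parabolic $\Stab([U_0])\subset \GL(V_6)$, the source $U_0^*\otimes(V_6/U_0)$ is irreducible under its Levi $\GL(U_0)\times\GL(V_6/U_0)$, and $\Theta$ is visibly nonzero (taking $\phi$ with $\phi(e_1)=f_1$, $\phi(e_2)=\phi(e_3)=0$, the formula gives $\alpha_\phi(e_1\wedge e_2\wedge f_3)=f_1\wedge e_2\wedge f_3\ne 0$); by Schur's lemma $\Theta$ is therefore injective, hence an isomorphism of $9$-dimensional vector spaces. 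The main technical step, and the only place where a real calculation is needed, is the derivation of the formulas for $\alpha_\phi$; once those are in hand, the crucial vanishing on $\cC_{U_0}$ reduces to $g\wedge g=0$ and the remaining arguments are formal.
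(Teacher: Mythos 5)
Your proof is correct, and while it shares the same overall framework as the paper (the canonical identification $T_{[L]}\LG(W)\cong\Sym^2 L^\vee$ and the choice of a complement $U_\infty$), the execution is genuinely different. The paper cites \cite[Lemma~2.7]{ikkr}, which gives $q_U$ as an explicit polynomial in the matrix entries $(b_{ij})$ of $U$, and simply reads off the linear part as $\sum_{i,j} b_{ij}M^{i,j}$; the image of $\Theta$ is then \emph{immediately} identified as the span of the nine $2\times 2$ cofactors, which is the degree-$2$ part of the ideal of $\cC_{U_0}$, so injectivity and surjectivity fall out at once from linear independence of the minors. You instead rederive the first-order expansion from scratch along the curve $U_t=(\id+t\phi)(U_0)$ to obtain the graph map $\alpha_\phi$, check the vanishing on decomposable trivectors by the clean wedge-product observation that $g\wedge g=0$ and $\bw4 U_0=0$, and then conclude injectivity via Schur's lemma (using equivariance under the parabolic $\Stab([U_0])$ and irreducibility of $U_0^\vee\otimes(V_6/U_0)$ under its Levi) together with a dimension count on the target. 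Your argument is more self-contained (no appeal to \cite{ikkr}) and separates the three ingredients---image lands in $I_{\cC_{U_0}}(2)$, source is irreducible hence $\Theta$ injective, target has dimension $9$---which makes it a bit longer but more transparent; the paper's version is terser because the explicit cofactor formula does all three jobs simultaneously. One small point worth making explicit in your write-up: $\alpha_\phi\neq 0$ does imply $q_\phi\neq 0$, but this uses that the pairing $T_{U_0}\times L'\to\bw6 V_6$ is perfect, so that $\alpha_\phi(x)\wedge y=0$ for all $x,y$ forces $\alpha_\phi=0$; as stated you exhibit $\alpha_\phi\neq 0$ and silently invoke this.
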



\begin{proof}
 Consider the affine open subsets  
$$\cU \coloneqq \{[U ]\in \Gr(3, V_6)\mid U \cap U_\infty =0\}\subset\Gr(3, V_6), $$
neighborhood of $[U_0]$, and 
$$\cV \coloneqq \{[A]\in \LG(\bw3V_6)\mid A\cap T_{U_\infty} =0\}\subset\LG(\bw3V_6). $$
 We get identifications $\Hom(U_0, U_\infty)\simeq \cU$ and $\Sym^2\!T_{U_0}^\vee\simeq \cV$ by sending a morphism to its graph and a quadratic form on $ T_{U_0}$, viewed as a symmetric morphism $T_{U_0}\to T_{U_0}^\vee$, to the graph of the composition of this morphism with the inverse of the  isomorphism $T_{U_\infty}\isomto T_{U_0}^\vee$ given by the symplectic form. 

The morphism $T\vert_\cU\colon \cU\to\cV$  then
 corresponds to a morphism  
 \begin{align*}
\Hom(U_0,U_\infty)  &\lra \Sym^2(T_{U_0}^\vee) 
 \\
         [U] &\longmapsto [q_U],
 \end{align*}
 where $q_U$ is described in \cite[Lemma~2.7]{ikkr} as a polynomial in the entries  of the matrix $ (b_{ij})_{1\le i, j \le 3}$ representing $U$ in fixed bases of $U_0$ and $U_\infty$.\ In particular, if we write an element of $T_{U_0} =\bw3 U_0 \oplus \Hom(U_0, U_\infty)$ as $(m, M)$, the linear part in the $b_{ij}$ of $q_U(m, M)$ is given by $\sum_{ij} b_{i,j} M^{i,j}$,
 where $M^{i,j}$ is the determinant of the $2\times 2$ submatrix of $M$ in which we remove the column $i$ and the row $j$. 

 Therefore, the image of the tangent space $T_{\cU, [U_0]}$ by $T_{\Theta, [U_0]}$ is generated by the cofactors~$M^{i,j}$, namely, this image is, in $\Sym^2(T_{U_0}^\vee) = H^0(\P(T_{U_0}),  \cO (2))$, the linear subsystem of quadrics in~$\P(T_{U_0})$ that vanish on the set of matrices of rank 1.  \end{proof}

\subsection{The restriction map $r_K$}\label{subsec:TgGr35andZk}
Let $A\subset \bw3V_6$ be a Lagrangian subspace and let $[U_0]$ be a point of $\sZ^{k}_{A}$, so that $K \coloneqq A\cap T_{U_0}$ has dimension $k$.\ The restriction map
\begin{equation}\label{defrk}
r_K\colon H^0(\P(T_{U_0}), I_{\cC_{U_0}}(2))\lra H^0(\P(K), \cO(2))\isom \Sym^2\!K^\vee
\end{equation}
will play a very important role for us.\ The following proposition is crucial for our argument.

\begin{prop}\label{prop23}
    Let  $A\subset \bw3V_6$ be a Lagrangian subspace with  no decomposable vectors and let $[U_0]$ be a point of $\sZ^{k}_{A}$, with $k\in\{1,2,3,4\}$.\ With the notation above,
    \begin{itemize}
        \item[\rm{(a)}] when $k\in\{1,2,3\}$, the map $r_K$ is surjective with kernel of dimension $9-k(k+1)/2$,
        \item[\rm{(b)}] when $k=4$, the map $r_K$ is injective with image a general hyperplane in  $H^0(\P(K), \cO(2))$.
    \end{itemize}
\end{prop}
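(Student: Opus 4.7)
My approach is to make $r_K$ completely concrete via the cofactor description of $\Theta$ from the proof of Proposition~\ref{prop:tgtGr36}. Since $A$ has no decomposable vectors, $K\cap \bw3 U_0=\{0\}$, so the projection $\pi\colon T_{U_0}=\bw3 U_0\oplus \Hom(U_0,U_\infty)\to \Hom(U_0,U_\infty)$ is injective on $K$, with image a $k$-dimensional subspace $K':=\pi(K)$ such that $\P(K')\cap \cM_1=\varnothing$, where $\cM_1\subset \P^8=\P(\Hom(U_0,U_\infty))$ is the Segre variety of rank-$1$ matrices. The elements of $H^0(I_{\cC_{U_0}}(2))$ depend only on the $M$-component (Proposition~\ref{prop:tgtGr36}), so $r_K$ identifies with the restriction map
$$
r_{K'}\colon H^0(\P^8,I_{\cM_1}(2))\lra \Sym^2 K'^\vee.
$$

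To analyse $r_{K'}$, I identify $H^0(\P^8,I_{\cM_1}(2))$ with $\Hom(U_0,U_\infty)$ via $C\mapsto Q_C$, $Q_C(M)=\mathrm{Tr}(C\cdot \mathrm{adj}(M))$. Then $\ker r_{K'}\subset \Hom(U_0,U_\infty)$ is the trace-orthogonal of $\Span_{M\in K'}\mathrm{adj}(M)\subset \Hom(U_\infty,U_0)$; this span equals the image of the linearisation $\widetilde{\mathrm{adj}}\colon \Sym^2 K'\to \Hom(U_\infty,U_0)$ of the quadratic map $M\mapsto \mathrm{adj}(M)$. Under the $\GL(U_0)\times \GL(U_\infty)$-decomposition
$$
\Sym^2(U_0^\vee\otimes U_\infty)=\Sym^2 U_0^\vee\otimes \Sym^2 U_\infty\oplus \bw2 U_0^\vee\otimes \bw2 U_\infty,
$$
$\widetilde{\mathrm{adj}}$ is (canonically) the projection onto the second summand; hence (a) amounts to injectivity of $\widetilde{\mathrm{adj}}|_{\Sym^2 K'}$ (for $k\le 3$), and the injectivity claim in (b) amounts to its surjectivity (for $k=4$).

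For (a), I would show $\Sym^2 K'\cap (\Sym^2 U_0^\vee\otimes \Sym^2 U_\infty)=\{0\}$: a nonzero element would encode a nontrivial quadratic relation among the cofactor matrices $\{\mathrm{adj}(M):M\in K'\}$, and analysing the rank stratification of $K'$ (the rank-$1$ and rank-$2$ loci inside $\P(K')$), together with the Kronecker normal form of the corresponding pencil or triple of matrices, shows that this forces a rank-$\le 1$ matrix to lie in $K'$, contradicting $\P(K')\cap \cM_1=\varnothing$. The case $k=1$ is immediate.

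For (b), with $k=4$, this is the main difficulty: $\dim \Sym^2 K'=10$ and $\dim \Hom(U_\infty,U_0)=9$, so $\widetilde{\mathrm{adj}}|_{\Sym^2 K'}$ automatically has kernel of dimension at least $1$, and I must show it is exactly $1$. I plan to verify this by an explicit computation at a single well-chosen $4$-dimensional $K'$ satisfying the hypotheses (for instance, with a basis of four matrices in sufficiently general position), and then to conclude by openness of the condition on $\Gr(4,\Hom(U_0,U_\infty))$ together with the irreducibility of the locus of $4$-dimensional subspaces avoiding $\cM_1$. The assertion that $\mathrm{image}(r_{K'})$ is a \emph{general} hyperplane in $\Sym^2 K'^\vee$ will then follow by the same family argument: as $K'$ varies over this open irreducible locus, the line $\ker \widetilde{\mathrm{adj}}|_{\Sym^2 K'}\subset \Sym^2 K'$ sweeps out a dense open subset of $\P(\Sym^2 K')$, so the induced hyperplane is general in $\Sym^2 K'^\vee$ in the sense needed for the later applications.
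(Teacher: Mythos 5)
Your reformulation of $r_K$ as the restriction map $r_{K'}$ to the Segre fourfold, via projection from $[\bw3 U_0]$, matches the paper's setup, and the trace-pairing dictionary through the adjugate is a clean and correct way to recast the problem: $\ker r_{K'}$ is the trace-annihilator of $\Im(\widetilde{\mathrm{adj}}\vert_{\Sym^2 K'})$, and $\ker \widetilde{\mathrm{adj}} = \Sym^2 U_0^\vee\otimes\Sym^2 U_\infty$ is indeed the complementary $\GL\times\GL$-isotypic piece. Your sketch for (a) is incomplete but plausible; note however that the paper simply cites \cite[Lemma~2.8]{ikkr} for this part, so a self-contained proof is optional.

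The genuine gap is in your treatment of (b). You propose to verify surjectivity of $\widetilde{\mathrm{adj}}\vert_{\Sym^2 K'}$ (equivalently injectivity of $r_{K'}$) at a single well-chosen $K'$ and then ``conclude by openness \ldots together with irreducibility'' of the locus of $4$-planes avoiding $\cM_1$. That argument only yields the conclusion on a dense open subset of that irreducible locus; it does not rule out a nonempty proper closed locus of $K'$ (still disjoint from $\cM_1$) on which surjectivity fails. But the proposition is needed for \emph{every} $[U_0]\in Z^4_A$ and every $[A]\in\Gamma\smallsetminus\Sigma$, not just generic ones: it feeds into Proposition~\ref{prop:LocalStructureZ2} and Theorem~\ref{theo:blowUpFamily}, which are stated for all such $[A]$. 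The paper closes this by a direct geometric argument valid for \emph{all} such $K'$: every quadric $Q_0\supset\cM_1$, analyzed orbit-by-orbit (rank $9$, $6$, $4$) under $\GL(U_0)\times\GL(U_\infty)$, is shown to contain no $\P^3$ disjoint from $\cM_1$. Your family argument also does not actually engage with what ``general hyperplane'' means here (that the defining tensor $\bp\in\Sym^2 K$ has rank $4$); the paper's Lemma~\ref{lem:GeneralHyper} establishes this for each individual $K$ by showing the cofactor Cremona map $\Phi$ is an embedding on $\P(K)$, which excludes $\rank(\bp)\le 2$, and then ruling out $\rank(\bp)=3$ by a separate case analysis. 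You would need to replace your genericity reduction by an argument of this type, valid pointwise.
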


The meaning of ``general'' in Item~(b) is the following: a hyperplane in $H^0(\P(K), \cO(2))\isom \Sym^2\!K^\vee$ is defined by a nonzero element of $\Sym^2\!K$ and we want the corresponding tensor to be of maximal rank $k=4$. 

The proof will show that the proposition is true more generally for any linear subspace $K\subset T_{U_0}$ of dimension $k\le 4$ with no decomposable vectors.

\begin{proof}
Recall that  $\cC_{U_0}$ is the cone with vertex $\bv\coloneqq [\bw3{U_0}]$ over the smooth fourfold $\cM_1\subset \P(\Hom(U_0, U_\infty))$ of morphisms  of rank $1$.\ Since the latter is not contained in any hyperplane, so is $\cC_{U_0}$, hence its projective Zariski tangent space   at $\bv$ is the whole space $\P^9$.\ Any quadric  containing $\cC_{U_0}$  is therefore a cone with vertex $\bv$.

Since $A$ contains no decomposable vectors, $\P(K)$ is disjoint from $\cC_{U_0}$.\ In particular, $\bv\notin \P(K)$, so we may, upon projecting from $\bv$, consider $\P(K)$ as a linear subspace  of $\P(\Hom(U_0, U_\infty))$ (as above, $U_\infty\subset V_6$ is a subspace complementary to~$U_0$).\ All in all, this proves that
  $r_K$ identifies with the restriction morphism
\begin{equation*}\label{defrpk}
  r'_K\colon H^0(\PP(\Hom (U_0, U_\infty)), I_{\cM_1}(2)) \longrightarrow H^0(\P(K), \cO(2)),  
\end{equation*}
where $\P(K)\cap \cM_1=\vide$.\

That being said, Item~(a) is actually \cite[Lemma~2.8]{ikkr}, so we may assume $k=4$ and we need to prove that $r'_K$ is injective with general image.\ 
Suppose by contradiction that there exists   a  nonzero  element  
 in the kernel of $r'_K$.\ Its zero-locus    $Q\subset \PP(\Hom (U_0, U_\infty))$ is a quadric   that contains the disjoint $\P(K)=\PP^3$ and  $\cM_1$.\ This contradicts the following lemma and proves the injectivity of $r'_K$.
 
 \begin{lemm}
    Let $\cM_1\subset \P(\Hom(U_0, U_\infty))=\P^8$ be the smooth Segre fourfold of morphisms  of rank $1$ and let $Q_0\subset \P^8$ be a quadric containing $\cM_1$.\ Any $\P^3\subset Q_0$ meets $\cM_1$.
\end{lemm}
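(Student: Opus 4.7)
The plan is to argue by contradiction: assume $L=\P^3\subset Q_0$ with $L\cap \cM_1=\vide$, and derive a contradiction by exhibiting a nonzero cohomological intersection class $[L]\cdot[\cM_1]$ inside $Q_0$.

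First I would parameterize the nine-dimensional space of quadrics through $\cM_1$. The decomposition
\[
\Sym^2(U_0^*\otimes U_\infty)=\bigl(\Sym^2 U_0^*\otimes\Sym^2 U_\infty\bigr)\oplus\bigl(\bw{2}{U_0^*}\otimes\bw{2}{U_\infty}\bigr)
\]
identifies $H^0(\P^8,I_{\cM_1}(2))$ with $\bw{2}{U_0^*}\otimes\bw{2}{U_\infty}\simeq\Hom(U_\infty,U_0)$, each element $B$ giving the quadric $Q_B(M)=\langle B,\bw{2}M\rangle$. A direct calculation shows $Q_B$ has rank $9$, $6$ or $4$ as a quadratic form on $\C^9$ according as $\rk(B)=3,2,1$.

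If $\rk(B)=3$, then $Q_0$ is a smooth seven-dimensional quadric. In $A^*(Q_0)$, with hyperplane class $h$ and maximal-isotropic class $\sigma$ satisfying $h^4=2\sigma$ and $\sigma\cdot h^3=1$, the projection formula $i_* h^3_{Q_0}=2h^4_{\P^8}$ together with $[\cM_1]_{\P^8}=6h^4$ (coming from $\deg(\cM_1)=6$) gives $[\cM_1]_{Q_0}=3h^3$; since $L$ is a maximal-isotropic $\P^3$, $[L]_{Q_0}=\sigma$, whence $[L]\cdot[\cM_1]=3\sigma\cdot h^3=3\ne 0$ in $A^7(Q_0)=\Z$. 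But disjoint supports force this cup product to vanish---contradiction.

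The hard part will be the singular case $\rk(B)\le 2$: here $Q_0$ is a cone over a smooth quadric $Q_0'$ with vertex $V\subset\P^8$ of dimension $v=4$ (if $\rk B=1$, so $Q_0'\subset\P^3$) or $v=2$ (if $\rk B=2$, so $Q_0'\subset\P^5$). Since $Q_0'$ (of dimension $2$ or $4$) contains no $\P^3$, the projection from $V$ forces $L\cap V\ne\vide$. An explicit coordinate calculation then identifies $\cM_1\cap V$: two $\P^2$'s meeting at a point if $v=4$, a smooth conic if $v=2$. A case analysis on $\dim(L\cap V)$ finishes the argument. In the ``large'' cases $L\cap V$ meets $\cM_1\cap V$ already by dimension inside $V$ (e.g.\ a line meets a smooth conic in $\P^2$, or a $\P^2$ meets the union of two $\P^2$'s in $\P^4$). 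In the remaining ``minimal'' cases ($\dim(L\cap V)=1$ when $v=4$, and $\dim(L\cap V)=0$ when $v=2$), the projection $\pi_V(L)$ must be a maximal-isotropic subspace of $Q_0'$, and the preimage $W=\pi_V^{-1}(\pi_V(L))$ is a linear $\P^6$ (resp.\ $\P^5$) containing both $L$ and a distinguished component of $\cM_1\cap W$: a Segre $\P^2\times\P^1$ of degree $3$ in $W=\P^6$, respectively a $\P^2$-ruling fiber of $\cM_1$ in $W=\P^5$; a cup-product computation in $A^*(W)$ gives the value $3$, resp.\ $1$, nonzero, so that $L$ must meet this distinguished component and hence $L\cap\cM_1\ne\vide$. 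The main obstacle in executing this plan is carrying out the singular-case analysis cleanly and identifying the correct linear strata of $\cM_1$ inside the cone preimages $W$.
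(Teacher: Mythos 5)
Your plan follows the same overall architecture as the paper's proof: the reduction to the three $\GL(U_0)\times\GL(U_\infty)$-orbits of quadrics through $\cM_1$ (ranks $9$, $6$, $4$), and the treatment of the smooth case by intersection theory (the paper uses Lefschetz to get $[\cM_1]=mh^3$ in $H^6(Q_0,\Z)$ and then pairs with $[\P^3]$; your computation of $m=3$ from $\deg\cM_1=6$ and $i_*h^3_{Q_0}=2h^4$ is the same argument made explicit). For the cone cases the paper projects from the vertex $V$ and uses explicit coordinates (in the rank-$6$ case it also invokes the two families of planes on the smooth quadric $Q_1\subset\P^5$, where your coordinate projection idea is more uniform), so your route there is genuinely different.

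However, there is a gap in the singular cases as written. In the rank-$6$ case you claim that $W=\pi_V^{-1}(\pi_V(L))\cong\P^5$ always contains a ``$\P^2$-ruling fiber of $\cM_1$.'' This is not correct in general. Using the paper's normal form $Q_0=x_{11}(x_{22}+x_{33})-x_{12}x_{21}-x_{13}x_{31}$, one has $V\cap\P^5=\vide$, so $P_i\subset W$ forces $P_i\subset\Pi$, i.e.\ $\Pi=P_i$; for a general plane $\Pi\subset Q_1$, $W$ contains \emph{no} full $\P^2$ of the ruling. Direct computation bears this out: for $\Pi=\P(x_{21},x_{22},x_{31})$ one gets $\cM_1\cap W\cong\P^1\times\P^2$ (a $3$-fold, not a $\P^2$), while for $\Pi=\P(x_{11},x_{13},x_{21})$ one gets a quadric surface plus the vertex conic (so dimension $2$, but not a linear $\P^2$). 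What actually rescues your dimension argument is the weaker statement that $\dim(\cM_1\cap W)\ge 2$ so that a $\P^3$ in $W\cong\P^5$ must meet it — but this inequality is exactly the content you would need to prove, and your proposal asserts rather than establishes it (and asserts it via the wrong distinguished subvariety). The rank-$4$ minimal case is on firmer ground: for every line $\ell\subset Q_1$, the locus $\cM_1\cap\pi_V^{-1}(\ell)$ does contain a degree-$3$ Segre threefold $\P^2\times\P^1\subset\P^6$, so your cup-product argument goes through there. The remaining work to turn your plan into a proof is precisely the verification, for every plane in $Q_1$ in the rank-$6$ case, that $\cM_1\cap W$ has dimension at least $2$ (or, alternatively, to substitute a rulings argument as the paper does).
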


\begin{proof}
 The space of quadrics in $\P^8$ containing $\cM_1$ is the $\P^8$ spanned by the 9 maximal minors of elements of $\Hom(U_0, U_\infty)$, and the group $\GL(U_0) \times \GL(U_\infty)$
acts on it with only three orbits:
\begin{enumerate}
    \item[(a)]  quadrics of (maximal) rank 9, represented for example by $M^{1,1}+M^{2,2}+M^{3,3}$;
    \item[(b)] quadrics of rank 6, represented  for example by $M^{2,2}+M^{3,3}$;
\item[(c)] quadrics of rank 4, represented for example by $M^{3,3}$,
\end{enumerate}
where $M^{i,j}$ is the $(i,j)$-minor of an element $M=(x_{ij})_{1\le i,j\le3}$ of $\Hom(U_0, U_\infty)$ (in each case, it is enough to find one quadric containing $\cM_1$ of the correct rank).

Since the action of $\GL(U_0) \times \GL(U_\infty)$ fixes $\cM_1$, we can suppose that the quadric $Q_0$ is of one in the previous list.

In case (a), the Lefschetz hyperplane theorem shows that $H^{2i}(Q_0, \Z)\simeq H^{2i}(\PP^8, \Z) = \Z h^i$, where $h\subset \PP^8$ is a hyperplane section, for each $i\le 3$.\ Therefore,  $[\cM_1]=mh^3\vert_{Q_0}\in H^{6}(Q_0, \Z)$ for some  $m\in \Z_{>0}$.\ Thus, for each $\PP^3\subset Q_0$, we have 
$$[\PP^3]\cdot [\cM_1]=m (h\vert_{\P^3})^3=m\ne 0.
$$
Hence, each $\PP^3\subset Q_0$ meets $\cM_1$.\

  In case (b), the quadric 
$$Q_0 =  M^{2,2}+M^{3,3}=x_{11}(x_{22}+x_{33})-x_{12}x_{21}-x_{13}x_{31}$$ has rank 6, hence is a  cone over a smooth quadric 
$$Q_1 \subset \P^5=\P\begin{pmatrix}
   x_{11}& x_{12}&x_{13}\\x_{21}&x_{22}&0\\ x_{31}&0&0
\end{pmatrix} $$ with
vertex $$\P^2=(x_{11}=x_{22}+x_{33}=x_{12}=x_{21}=x_{13}=x_{31}=0) .$$
One sees that $\cM_1\cap \P^2$ is the smooth conic $x_{22}^2+x_{23}x_{32}=0$ and that $\cM_1\cap Q_1$ contains the planes 
$$P_1=\P(x_{11},x_{12},x_{13} )\quad\textnormal{and}\quad   P_2=\P(x_{11},x_{21},x_{31}) $$ that intersect at one point.

The Hilbert scheme $F_2(Q_1)$ of planes contained in $Q_1$ has 2 connected components, and two planes are in different components if and only if they intersect at one point (see \cite[p.~35]{gh}).\ Therefore, $P_1$ and $P_2$ are in two different components of $F_2(Q_1)$.

Consider now our $\P^3\subset Q_0$ disjoint from $ \cM_1 $.\ Its intersection with the vertex $\P^2$ does not meet the conic $\cM_1\cap \P^2$ hence is finite.\ Since the smooth quadric~$Q_1$ contains no 3 dimensional linear spaces, the 
 projection of $\P^3$ from the vertex $\P^2$ to $\P^5$, which is contained in $ Q_1$, is a plane contained in $ Q_1$.\ Therefore, it intersects nontrivially the $P_i$ that is not in the same component of $F_2(Q_1)$.\ Since $P_i$ is contained in $\cM_1$, we obtain that $\P^3$ meets $\cM_1$.

In case (c),   the quadric 
$$Q_0=M^{3,3}=x_{11}x_{22} -x_{12}x_{21}$$ is a cone over a smooth quadric $Q_1 \subset \P^3(x_{11},x_{12},x_{21},x_{22} )$ with
vertex 
$\P^4$.\ Hence, the projection from $\P^4$ to $\P^2$ of any linear subspace  $\P^3\subset Q_0$ is contained in a line on $Q_1$,
say defined by $x_{12}=x_{22}=0$.\ The preimage in $\P^8$ of this line is 
$$\P^6=\P\begin{pmatrix}
 x_{11}& 0&x_{13}\\
 x_{21}&0&x_{23} \\ 
 x_{31}&x_{32}&x_{33}
\end{pmatrix},$$
whose intersection with $\cM_1$ has dimension $3$.\ Therefore,   $\P^3\subset \P^6\subset Q_0$ must meet  $\cM_1$. 
 \end{proof}
To finish the proof of the theorem, we need to prove that (still in the case $k=4$), the image of $r'_K$ is a general hyperplane.\ This follows from the next lemma.\end{proof}

\begin{lemm}\label{lem:GeneralHyper}
    In the above setting, the point $\bp\in \P(\Sym^2\!K)$ defined by the hyperplane $\Im(r'_K)\subset H^0(\P(K), \mathcal{O}(2))$ has maximal rank.
\end{lemm}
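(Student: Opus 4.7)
The plan is to argue by contradiction: assume the point $\bp\in \P(\Sym^2 K)$ has rank $r\le 3$. The strategy is to translate this into a containment property of the hyperplane $\Im(r'_K)\subset \Sym^2 K^\vee$ and then derive a contradiction from the $k=3$ case of Proposition~\ref{prop23}(a).

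First, I would use the standard description of the rank of a symmetric tensor: viewed as a symmetric linear map $K^\vee\to K$, the tensor $\bp$ has image a subspace of dimension $r\le 3$, hence this image is contained in some hyperplane $H\subset K$; equivalently, $\bp\in \Sym^2 H\subset \Sym^2 K$. Under the canonical perfect pairing $\Sym^2 K\otimes \Sym^2 K^\vee\to \C$, this means exactly that the annihilator hyperplane of $\bp$ in $\Sym^2 K^\vee$---which by construction of $\bp$ is $\Im(r'_K)$---contains the $4$-dimensional subspace
$$\ker\!\bigl(\Sym^2 K^\vee\twoheadrightarrow \Sym^2 H^\vee\bigr),$$
that is, the space of quadrics on $\P(K)$ vanishing on the hyperplane $\P(H)\cong\P^2$.

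Dually, this containment is equivalent to saying that the composition
$$H^0\bigl(\P(\Hom(U_0,U_\infty)),I_{\cM_1}(2)\bigr)\xrightarrow{\,r'_K\,}\Sym^2 K^\vee\twoheadrightarrow \Sym^2 H^\vee$$
fails to be surjective. The key observation is that this composition is nothing but the restriction map $r'_H$ from quadrics containing $\cM_1$ to quadrics on $\P(H)$. Since $\P(H)\subset \P(K)$ and $\P(K)\cap \cM_1=\vide$, the plane $\P(H)\cong \P^2$ is itself disjoint from $\cM_1$. Applying the $k=3$ case of Proposition~\ref{prop23}(a) (i.e., \cite[Lemma~2.8]{ikkr}, whose argument only requires that the linear subspace be disjoint from~$\cM_1$, not that it come from an intersection with a Lagrangian), the map $r'_H$ is surjective onto $\Sym^2 H^\vee$. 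This contradicts the previous paragraph, forcing $\bp$ to have rank~$4$.

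There is no serious obstacle in this argument: the proof is a clean dualization of the rank condition followed by reduction to a case already established. The only mildly delicate point is to confirm that Proposition~\ref{prop23}(a) (or equivalently \cite[Lemma~2.8]{ikkr}) applies to an arbitrary three-dimensional linear subspace whose projectivization avoids $\cM_1$, and not merely to those arising as $A'\cap T_{U_0}$ for a Lagrangian $A'$; this is immediate from the proof of that result, which only uses disjointness from~$\cM_1$.
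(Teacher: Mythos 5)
Your argument takes a genuinely different and shorter route than the paper's. The paper studies the adjugate map $\Phi$ directly: it first rules out $\rank(\bp)\le 2$ by showing that $\Phi|_{\P(K)}$ is an embedding, and then rules out $\rank(\bp)=3$ via a lengthy case analysis on the line $\ell$ through $\Phi(x),\Phi(y),\Phi(z)$, ultimately reaching a contradiction from the fact that a morphism defined by quadrics sends curves to curves of even degree. Your argument instead dualizes the rank condition: $\rank(\bp)\le 3$ forces $\Im(r'_K)$ to contain the $4$-dimensional ideal of a $\P^2\subset\P(K)$, hence forces the restriction map $r'_H$ onto that $\P^2$ to have image of dimension at most $5<6$, contradicting the $k=3$ surjectivity asserted in Proposition~\ref{prop23}(a). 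This observation is clean and shows that ruling out rank $3$---the hard part of the paper's proof---follows formally from the $k\le 3$ case, which the paper itself treats as a black box via \cite[Lemma~2.8]{ikkr}. The dualization step is correct: from $\bp\in\Sym^2 H$ one does get $\ker\bigl(\Sym^2 K^\vee\twoheadrightarrow\Sym^2 H^\vee\bigr)\subset\operatorname{Ann}(\bp)=\Im(r'_K)$, and the composition is indeed $r'_H$.

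The one point you flag yourself does need a real justification, and ``immediate from the proof'' of \cite[Lemma~2.8]{ikkr} is not a complete argument on its own; Proposition~\ref{prop23}(a) as stated concerns $K=A'\cap T_{U_0}$, not an arbitrary $H$. A cleaner way to close the gap is to show that every $3$-dimensional $H\subset T_{U_0}$ with $\P(H)\cap\cC_{U_0}=\varnothing$ \emph{is} of this form: the Lagrangians $A'\supset H$ form a $28$-dimensional family $\LG(H^\perp/H)$, the locus of such $A'$ with a decomposable vector is at most $27$-dimensional (parametrize pairs $(A',[W])$ with $H\subset A'$ and $\wedge^3W\in A'$, which requires $[W]\in\Gr(3,V_6)\cap\P(H^\perp)$, a $6$-dimensional locus, each contributing a $21$-dimensional family of Lagrangians), and for generic $A'\supset H$ the two Lagrangians $A'/H$ and $T_{U_0}/H$ of $H^\perp/H$ meet in $0$, so $A'\cap T_{U_0}=H$ exactly. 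With that addendum, Proposition~\ref{prop23}(a) applies and your proof is a valid alternative. Absent it, the assertion about \cite{ikkr} remains a claim the reader must verify independently.
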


  \begin{proof}
  We follow the proof of~\cite[Proposition~2.5]{og} and consider the rational morphism \begin{equation}\label{defphi}
\begin{array}{rcl}
      \Phi\colon \P^8\isom \P(\Hom(U_0, U_\infty))& \dra & |I_{\cM_1}(2)|^\vee\simeq \P^8\\
    M = \begin{pmatrix}
        x_{11} & x_{12} & x_{13} \\
        x_{21} & x_{22} & x_{23} \\
        x_{31} & x_{32} & x_{33} \\
    \end{pmatrix}  & \longmapsto &   \begin{pmatrix}
        M^{1,1} & M^{1,2} & M^{1,3} \\
        M^{2,1} & M^{2,2} & M^{2,3} \\
        M^{3,1} & M^{3,2} & M^{3,3} \\
    \end{pmatrix}
 \end{array}      
\end{equation}
    induced by the linear system $|I_{\cM_1}(2)|\subset H^0(\P^8, \cO(2))$.  

   Using that $\det(M)$ can be expressed as a linear combination of minors of $M$,  one sees that the $(i,j)$-minor of $\Phi(M)$ is equal to 
    $$\Phi(M)^{i,j}= \det(M)\cdot x_{i,j}.$$ 
     It follows that $\Phi$ is a birational involution which
   is regular on $\P^8\smallsetminus \cM_1$ and an isomorphism on~$\P^8\smallsetminus \cM_2$, while $\cM_2\smallsetminus \cM_1$ is contracted onto $\cM_1$.\ 

   We show that $\Phi$ restricted to $\P(K)\subset \P^8\smallsetminus \cM_1$ is an embedding. 
   
   First, we note that the  fibers of $\Phi\colon \cM_2\smallsetminus \cM_1\to \cM_1$ are complements of    quadric surfaces in their span $\P^3$.\ Indeed, the group $  \GL(U_0)\times \GL(U_\infty)$   acts on $I_{\cM_1}(2)$ (see the proof of Theorem~\ref{theorem:ikkr}) and  transitively    on $\cM_1$.\ Therefore, all fibers of $\Phi$ are conjugate under this action.\ Now, the preimage of $\begin{pmatrix}
        1 & 0 & 0 \\
        0 & 0 & 0 \\
        0 & 0 & 0 \\
    \end{pmatrix}$ via $\Phi\colon  \cM_2\smallsetminus \cM_1\to  \cM_1$  is, in the set of   matrices $M$   all of whose minors but $M^{1,1}$ are 0, which is equal to 
    $$
    \P^3=\P\begin{pmatrix}
        0 & 0 & 0 \\
        0 & x_{22} & x_{23} \\
        0 & x_{32} & x_{33} \\
    \end{pmatrix},
    $$
the set of   those   of rank exactly 2.\ It is  defined in this $ \P^3$ by $ x_{22} x_{33}- x_{23} x_{32}\ne 0 $.

The morphism $\Phi\vert_{\P(K)}$ is an embedding outside $\cM_2$ and, at points of $\cM_2\smallsetminus  \cM_1$, its fiber is the  linear space $\P(K)\cap\P^3$, which must be contained in the complement of    a quadric surface, so it is also one reduced point.\ This proves our claim that $\Phi\vert_{\P(K)}$ is an embedding.\ But this restriction can be identified 
  with the composition    $$
       \P(K) \stackrel{\nu_2}{\lhra} \P(\Sym^2\!K) \dra \P(\Im(r'_K)^\vee) ,
    $$
    where $\nu_2$ is the second Veronese embedding and the second map is the projection from the point $\bp$ of $\P(\Sym^2\!K)$ defined by the hyperplane $ \Im(r'_K)\subset \Sym^2\!K^\vee$.\ The fact that it is an embedding means that $\bp$ is not in the second secant variety of $\nu_2(\P(K))$ and leaves only two possibilities:
    \begin{itemize}
     \item[(a)] $\rank(\bp)=3$;
        \item[(b)] $\rank(\bp)=4$ (the hyperplane $\Im(r'_K)$ is general).
   \end{itemize}
   
    We prove that  case (a) cannot happen.\ 
    Since any rank 3 symmetric matrix can be written as the sum of three  rank 1 symmetric matrices, this case   happens if and only if there exist three distinct points $x, y,z \in \P(K)$ such that $\bp = \nu_2(x) + \nu_2(y) + \nu_2(z)$, or equivalently, such that the points $\Phi(x)$, $\Phi(y)$, and $\Phi(z)$ are distinct and lie on a line $\ell$.
    
     We show that in this case, we have $\ell =\Phi(C)$ for some curve $C$ contained in $\P(K)$.\ This leads to a contradiction, since $\Phi$ is a regular embedding on $\P(K)$ which is defined by quadrics, hence the image of any curve in $\P(K)$ has even degree.
    
     We first consider the case $\ell\subset \cM_1$.\ Note that every line in $\cM_1$ is contained in a $\P^2\subset \cM_1$ that is given by matrices with the same $2$-dimensional kernel or the same $1$-dimensional image.\ Up to change of basis and transposition, we can suppose $\P^2 = \P\begin{pmatrix} y_1 & 0 & 0\\
    y_2 & 0 & 0 \\
    y_3 & 0 & 0
    \end{pmatrix}$  and $\ell\subset \P^2$ is defined by the equation $y_3 = 0$.\ Hence, the preimage of this $\P^2$ via $\Phi\vert_{\P^8\smallsetminus \cM_1}$ is the complement in
    $$ 
    \P^5=\P\begin{pmatrix}
        0 & x_{12} & x_{13} \\
        0 & x_{22} & x_{23} \\
        0 & x_{32} & x_{33} \\
    \end{pmatrix} \subset \cM_2
    $$ 
    of the   subvariety $\cM_1\cap \P^5$, and the preimage of  $ \ell $ is    defined by the quadratic equation $x_{12}   x_{23} -
         x_{22}   x_{13}=0$.\ Note that $\P(K)\cap \P^5$ is a linear space that contains the three distinct points $x$, $y$, $z$ but does not meet the codimension $2$ subvariety $\cM_1\cap \P^5$.\ Therefore, $\P(K)\cap \P^5$ is a line and, since it meets the quadric $\Phi^{-1}(\ell)$ in three distinct points, it is contained in it.\ In particular, $\ell$ is the image of this line contained in $\P(K)$, as wanted. 

{We now consider the case $\ell\not\subset \cM_1$.\ In this case, $\Phi^{-1}$ defines a rational morphism on $\ell$  whose image $C$ is an irreducible curve.\ Recall that, outside $\cM_2$, the composition $\Phi^{-1}\circ \Phi$ is the identity.\ Moreover, since $\Phi$ is an automorphism of $\P^8\setminus \cM_2$, we have   inclusions
$$\Phi(\P(K))\cap \cM_2 =  \Phi(\P(K)\cap \cM_2) \subset \Phi(\cM_2\setminus\cM_1) = \cM_1.$$
In particular, $\Phi(\P(K))\cap \cM_2 = \Phi(\P(K))\cap \cM_1$  and $\Phi^{-1}\Phi(w) = w$ for some $w\in\P(K)$ if and only if $\Phi(w)\notin \cM_1$.\ We consider the points $\Phi(x)$, $\Phi(y)$ and $\Phi(z)$ of $\ell$ and distinguish four cases.
\begin{itemize}
\item The three points are all in $\cM_1$.\ This case cannot actually happen: since $\cM_1$ is an intersection of quadrics, each line is either contained in $\cM_1$ or meets it in {at most} $2$ points.
    \item None of the three points are   in $\cM_1$, hence the curve $C$ contains $x$, $y$ and $z$.\ Since the morphism $\Phi^{-1}$ is defined by quadrics, $C $ is a line or a conic that meets the linear space $\P(K)$ in $x$, $y$,   $z$.\ Since all conics are planar, it follows that $C$ is contained in $\P(K)$  and, of course, $\ell =\Phi(C)$. 
    \item Exactly one of these points, say $\Phi(x)$, is contained in $\cM_1$.\ Hence, all quadric polynomials that define $\Phi^{-1}$ vanish on $\Phi(x)$, therefore the curve $C$ is a line that meets the linear space $\P(K)$ in the two distinct point $y$ and $z$, thus it is contained in it.
    \item Exactly two of these points are contained in $\cM_1$.\ In this case, all quadric polynomials that define $\Phi^{-1}$ vanish at two points of $\ell$, hence $\Phi$ is constant on $\ell$, which is absurd. 
\end{itemize}
Therefore, $\ell = \Phi(C)$ for some curve $C$ as wanted, and we obtain a contradiction, as explained above.}
 \end{proof}

\subsection{Local description of $\sZ^k_A$ and proof   of Theorem~\ref{theorem:ikkr}(d)}\label{localde}
    We are now ready to give a local description of the analytic germ of $\sZ^k_A$ at a point $[U_0]$ of $\sZ^\ell_A$ (with $\ell\ge k$) and to prove Theorem~\ref{theorem:ikkr}(d).\ Actually, we will prove, following~\cite[Claim~3.8]{og}, a more general result that will be needed later on.

 Keeping the notation~\eqref{deftu}, we define the   families 
$$
\cZ^{\ge k} \coloneqq\{([A], [U])\in \LG(\bw3V_6)\times \Gr(3, V_6)\mid \dim(A\cap T_U)\ge k\} 
$$ 
of degeneracy loci $\sZ_A^{\ge k}$ and we define $\cZ^{k}$   analogously.

As in~\cite[Claim~3.8]{og}, we study the local structure of $\cZ^{\ge k}$ at a point $([A_0], [U_0])$ of~$\cZ^{\ell}$ for $\ell\ge k$.  

\begin{prop}\label{prop:LocalStructureZ2} 
Let $A_0\subset \bw3V_6$ be a Lagrangian subspace with no decomposable vectors and 
 {let $([A_0],[U_0])$ be a point of $\cZ^{\ell}$, with $\ell\in\{0,\dots,4\}$.\ Set $K\coloneqq A_0\cap T_{U_0}$ and let $\cS_{\le i}\subset\Sym^2\!K^\vee$ be the subscheme of quadratic forms of rank $\le i$.\ 
 
 For any $k\in\{0,\dots,\ell\}$, the analytic germ of the  scheme $\cZ^{\ge k}$ at the  point $([A_0], [U_0])$ is isomorphic to  the product of
  a smooth germ $(M,0)$        and
    the germ   $(\cS_{\le \ell - k},0)$.
    
    Under this isomorphism, the germ of $\sZ^{\ge k}_{A_0}\subset \cZ^{\ge k}$ at $[U_0]$ corresponds to the germ of $M'\times(\Im(r_K)\cap \cS_{\le \ell-k})$ at $(0,0)$, where the morphism $r_K$ is defined in~\eqref{defrk} and $(M',0)\subset (M,0)$ is smooth of dimension $\max(0, 9-\ell(\ell +1)/2)$.
    
    {In particular, for $[A_0]\in \Gamma\setminus \Sigma$, the germ of $\sZ^{\ge2}_{A_0}$ at a point of $\sZ^4_{A_0}$is the germ of $ \Im(r_K)\cap \cS_{\le 2} $ at~$0$.}}
\end{prop}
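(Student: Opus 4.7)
The plan is to linearise both parameter spaces at the base point via the Lagrangian splitting $\bw3V_6=T_{U_0}\oplus T_{U_\infty}$ for a choice of complement $U_\infty\subset V_6$ to $U_0$ with $A_0\cap T_{U_\infty}=0$. Each Lagrangian $A$ transverse to $T_{U_\infty}$ becomes the graph of a quadratic form $q_A\in\Sym^2 T_{U_0}^\vee$, giving an analytic chart around $[A_0]$ in $\LG(\bw3V_6)$; similarly, the graph parametrisation identifies a neighborhood of $[U_0]$ in $\Gr(3,V_6)$ with $\Hom(U_0,U_\infty)$, through which the map $[U]\mapsto[T_U]$ becomes the analytic morphism $\Theta$ of \S\ref{subsec:TgGr36}. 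In this chart one computes $A\cap T_U=\ker(q_A-\Theta(U))$, so that the defining inequality $\dim(A\cap T_U)\ge k$ translates to the determinantal condition
\[
\rank\bigl(q_A-\Theta(U)\bigr)\le 10-k,
\]
with $q_{A_0}$ of corank $\ell$ and kernel $K$.

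Set $F(A,U)\coloneqq q_A-\Theta(U)$. Since the $A$-variation already surjects onto $\Sym^2 T_{U_0}^\vee$, $F$ is a smooth submersion near $(A_0,U_0)$. I then invoke the parametric splitting lemma for quadratic forms: after an analytic $\GL(T_{U_0})$-valued change of basis $g(q)$, every $q$ near $q_{A_0}$ is block-diagonalised as $q_{A_0}|_{K^c}\oplus q'(q)$ with $q_{A_0}|_{K^c}$ nondegenerate and $q'(q)\in\Sym^2\!K^\vee$ vanishing at $q_{A_0}$, and the rank of $q$ drops by exactly $\rank q'(q)$. This produces a smooth submersion $\rho\colon(\Sym^2 T_{U_0}^\vee,q_{A_0})\to(\Sym^2\!K^\vee,0)$ under which the rank condition becomes $\rho(q)\in\cS_{\ell-k}$. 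Pulling back via $F$, the germ of $\cZ^{\ge k}$ at $([A_0],[U_0])$ is the product of the smooth kernel $(M,0)$ of $\rho\circ F$ and the determinantal germ $(\cS_{\ell-k},0)$.

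For the sub-germ $\sZ^{\ge k}_{A_0}$, I restrict to $A=A_0$, so that only the $U$-variation survives. Since $d\rho|_{q_{A_0}}$ is the linear restriction $\Sym^2 T_{U_0}^\vee\to\Sym^2\!K^\vee$, the differential of $\rho\circ F|_{A=A_0}$ at $[U_0]$ equals $-r_K\circ d_{[U_0]}\Theta$; by Proposition~\ref{prop:tgtGr36} this composite has image $\Im(r_K)\subset\Sym^2\!K^\vee$ and kernel of dimension $9-\dim\Im(r_K)$. Splitting $\Hom(U_0,U_\infty)$ accordingly into (kernel)~$\oplus$~(complement) and applying the implicit function theorem yields that $\sZ^{\ge k}_{A_0}$ is locally $M'\times(\Im(r_K)\cap\cS_{\ell-k})$ with $M'$ smooth of dimension $9-\dim\Im(r_K)$. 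Proposition~\ref{prop23} then distinguishes the two regimes: when $\ell\in\{1,2,3\}$, $r_K$ is surjective so $\dim M'=9-\ell(\ell+1)/2$ and $\Im(r_K)\cap\cS_{\ell-k}=\cS_{\ell-k}$; when $\ell=4$, $r_K$ is injective onto a hyperplane, so $\dim M'=0$. In both cases $\dim M'=\max(0,9-\ell(\ell+1)/2)$, and the final clause of the proposition is the specialisation $(\ell,k)=(4,2)$.

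The principal technical step --- and the one I expect to be the main obstacle --- is the parametric splitting of quadratic forms together with its compatibility with the slice $A=A_0$. The splitting itself is a classical consequence of the $\GL$-action on symmetric matrices and parametric completion of the square; the compatibility requires a careful choice of $\rho$ so that its restriction to the image of $\Theta$ straightens onto $\Im(r_K)\subset\Sym^2\!K^\vee$, reducing the sub-germ $\sZ^{\ge k}_{A_0}$ to the hyperplane section $\Im(r_K)\cap\cS_{\ell-k}$. This is carried out in the analogous EPW sextic setting in \cite[Claim~3.8]{og}, and the remainder of the argument is a direct transcription using Propositions~\ref{prop:tgtGr36} and~\ref{prop23}.
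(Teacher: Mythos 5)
Your proof takes essentially the same route as the paper's. You linearise both moduli parameters via the Lagrangian splitting $\bw3V_6=T_{U_0}\oplus T_{U_\infty}$, express the incidence condition as the corank condition on $q_A-\Theta(U)$, reduce modulo the nondegenerate part of $q_{A_0}$, and observe that the tangent map of the resulting morphism restricted to $A=A_0$ is $r_K$, whose maximal rank (Proposition~\ref{prop23}) finishes the job. This is exactly what the paper does, with the only cosmetic difference being the packaging of the reduction step: the paper constructs the auxiliary bundle $\widetilde K$ of orthogonal complements of a fixed $J$, trivialises it, and restricts $\Psi$ to $K$; you invoke the ``parametric splitting'' of a nearby quadratic form into its nondegenerate $J$-block and a $\Sym^2\!K^\vee$-block. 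These are the same construction, and both agree with $\pi_K$ (resp.\ $r_K$) at the level of tangent maps, which is all that matters. The point you flag as the main obstacle --- compatibility of the splitting with the slice $A=A_0$ --- is handled in the paper precisely by the observation that $r_K$ has maximal rank, so that the restriction $\Psi_K([A_0],\cdot)$ can be straightened by the rank theorem; your proposal already contains this step.
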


\begin{proof}
Choose a 3-dimensional subspace $U_\infty$ of $V_6$ complementary to $U_0$ and such that $T_{U_\infty}$ is complementary to $A_0$ (and also to $T_{U_0}$).\ As in the proof of Proposition~\ref{prop:tgtGr36}, we consider   affine open neighborhoods  
 $\cU  \subset\Gr(3, V_6) $ 
of $[U_0]$ and 
 $\cV  \subset\LG(\bw3V_6) $ 
of $[A_0]$,  with identifications $\Hom(U_0, U_\infty)\simeq \cU$ and $\Sym^2\!T_{U_0}^\vee\simeq \cV$.

If  $([A], [U]) \in \cV \times \cU$, we denote by $q_{U}$ and $q_{A}$   the quadratic forms on~$T_{U_0}$ associated with the Lagrangian subspaces $T_U$ and $A $ (which are elements of $\cV$)  respectively.

  Consider the smooth morphism 
$$
 \Psi\colon \cV \times \cU \lra \Sym^2 \!T_{U_0}^\vee 
$$
sending $([A], [U])$ to the quadratic form $q_{U}-q_{A}$.\ It is then easy to check (see for example the beginning of the proof of~\cite[Lemma~2.9]{ikkr}) that the intersection $\cZ^{\ge k} \cap (\cV\times \cU )$ is equal to $ \Psi^{-1}(\Sigma_k)$, where $\Sigma_k\subset \Sym^2\! T_{U_0}^\vee$ is the corank~$k$ degeneracy locus.
 
 {Fix a subspace $J\subset T_{U_0}$ such that $K\oplus J = T_{U_0}$.\ The quadratic form $\Psi([A_0],[U_0])$ is nondegenerate on $J$.\ Consider the vector bundle $\widetilde K \to \cV\times \cU$ whose fiber
    $$\widetilde K_{([A],[ U])} \coloneqq \{\omega \in T_{U_0} \mid  \forall \alpha\in J\quad \widetilde\Psi([A], [U])(\omega, \alpha) = 0 \}\subset T_{U_0}$$ 
 over $([A], [U])$ is the orthogonal complement of $J$ with respect to the quadratic form $\Psi([A], [U])$.

Upon shrinking $\cV$ and $\cU$, we can suppose that $\widetilde K$ is trivial over $\cV\times \cU$ (with fiber $K$) and that $\Psi([A], [U])$ is nondegenerate on $J$ for all $[A]\in \cV$, $[U]\in \cU$.\  
Hence, the corank of $\Psi([A], [U])$ is equal to the corank of the restriction of $\Psi([A], [U])$ to $\widetilde K_{([A], [U])}\simeq K$.\  
Therefore, the degeneracy loci of $\Psi$ are equal to the degeneracy loci of the smooth morphism
$$
\Psi_K  \colon \cV\times \cU \xrightarrow{\ \Psi\ }  \Sym^2\!T_{U_0}^\vee \xrightarrow{\ \pi_K\ } \Sym^2\!K^\vee,
$$
where $\pi_K$ is the restriction morphism.

In particular, $\cZ^{\ge k} \cap (\cV\times \cU)= \Psi_K^{-1}(\cS_{\le \ell-k})$.\ This implies the first claim.\ 
Analogously, the scheme $\sZ^{\ge k}_{A_0}$ is the preimage of $\cS_{\le \ell -k}$ via the morphism
$$
\Psi_K([A_0], \cdot) \colon \cU \lra \Sym^2\!K^\vee.
$$
Note that the morphism $\Psi([A_0], \cdot)\colon\cU \to \Sym^2\!T_{U_0}^\vee$ is induced by the morphism $T$ from Section~\ref{subsec:TgGr36}, whose tangent map induces the isomorphism $\Theta$ from Proposition~\ref{prop:tgtGr36}.\ Therefore, the tangent map to $\Psi_K([A_0], \cdot)$ at $[U_0]$ is the morphism~$r_K$ of~\eqref{defrk}.\ Hence, analytically locally around $[U_0]\in \sZ^{\ell}_{A_0}$, the image of the subscheme $\sZ^{\ge k}_{A_0}\subset \cZ^{\ge k}$ can be identified with $\Im(r_K)\cap \cS_{\le \ell-k}\subset \cS_{\le \ell-k}$.

This proves that the germ of $\sZ^{\ge k}_{A_0} $ at $[U_0]$ is the product of the germ  $\Im(r_K)\cap \cS_{\le \ell-k}$ at $0$ with a smooth germ  whose dimension can be computed using the fact, proved in   Proposition~\ref{prop23}, that~$r_K$ has maximal rank. } 
\end{proof}

The following corollary proves in particular Item~(d) of Theorem~\ref{theorem:ikkr}.

\begin{coro}  Let $A\subset \bw3V_6$ be a Lagrangian subspace with no decomposable vectors and  let $[U_0]\in \sZ^k_A$.\ The scheme $\sZ^k_A$ is smooth at $[U_0]$, of dimension $9-\frac{k(k+1)}2$ when $k\in\{0,1,2,3\}$, and of dimension $0$ when $k=4$.
\end{coro}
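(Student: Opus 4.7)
The proof is a direct application of Proposition~\ref{prop:LocalStructureZ2} in the special case $\ell = k$, combined with the observation that the scheme $\cS_0\subset\Sym^2\!K^\vee$ of quadratic forms of rank $\le 0$ is the reduced origin.

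First I would reduce from $\sZ^k_A$ to $\sZ^{\ge k}_A$. By definition, $\sZ^k_A = \sZ^{\ge k}_A\setminus \sZ^{\ge k+1}_A$, and $\sZ^{\ge k+1}_A$ is closed in $\sZ^{\ge k}_A$ (it is cut out by the further vanishing of $(k+1)\times(k+1)$ minors). Hence $\sZ^k_A$ is an open subscheme of $\sZ^{\ge k}_A$, and smoothness and dimension of $\sZ^k_A$ at $[U_0]$ reduce to the same statements for $\sZ^{\ge k}_A$ at $[U_0]$. For the case $k=4$, we moreover use Theorem~\ref{theorem:ikkr}(e), which gives $\sZ^{\ge 5}_A=\varnothing$, so that $\sZ^4_A = \sZ^{\ge 4}_A$ near $[U_0]$.

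Next I would apply Proposition~\ref{prop:LocalStructureZ2} with $A_0 = A$, $\ell = k$, and the same integer $k$ as exponent. The proposition then identifies the analytic germ of $\sZ^{\ge k}_A$ at $[U_0]$ with the germ of $M'\times(\Im(r_K)\cap \cS_{\ell-k})$ at $(0,0)$, where $M'$ is smooth of dimension $\max(0,\,9-k(k+1)/2)$. Since $\ell-k=0$, we have $\cS_0 = \{0\}$ as a (reduced) point of $\Sym^2\!K^\vee$, so $\Im(r_K)\cap\cS_0 = \{0\}$ regardless of what $\Im(r_K)$ is. Thus the germ of $\sZ^{\ge k}_A$ at $[U_0]$ is isomorphic to the smooth germ $(M',0)$.

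Finally, a direct computation of $\max(0,\,9-k(k+1)/2)$ for $k\in\{0,1,2,3,4\}$ gives the values $9,8,6,3,0$ respectively. In particular, for $k\in\{0,1,2,3\}$ the exponent $9-k(k+1)/2$ is nonnegative and equals the claimed dimension $9-\tfrac{k(k+1)}{2}$, whereas for $k=4$ we obtain a single reduced point, i.e.\ dimension $0$. I do not anticipate any obstacle, as everything follows formally from the proposition once one notices that the factor $\Im(r_K)\cap\cS_{\ell-k}$ degenerates to a single point precisely when $\ell=k$.
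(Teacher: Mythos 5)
Your proof is correct and follows exactly the same route as the paper's: the paper's proof is the one-line observation that this is the case $k=\ell$ of Proposition~\ref{prop:LocalStructureZ2}, and you have simply spelled out the details (that $\cS_0=\{0\}$, so the germ is the smooth factor $M'$ of dimension $\max(0,9-k(k+1)/2)$).
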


\begin{proof}
    This is just the case $k=\ell$ of the proposition.
\end{proof}

 \section{Divisors in the Lagrangian Grassmannian $\LG(\bw3V_6)$}\label{sec2}

\subsection{The divisors $\Gamma$, $\Sigma$, and $\Delta$ in $\LG(\bw3V_6)$} \label{sec31}

 O'Grady introduced in \cite[Section~1]{og2} the divisors
 $$
 \begin{array}{rcl}
       \Sigma&\coloneqq& \{[A] \in \LG(\bw3V_6)\mid A \mbox{ contains a decomposable vector}\}, \\
     \Delta & \coloneqq& \{[A] \in \LG(\bw3V_6)\mid \sY_A^{\ge 3} \ne\vide\} 
 \end{array}
 $$
in $\LG(\bw3V_6)$ and proved in \cite[Proposition~3.1]{og2} and \cite[Proposition~2.2]{og} that they are irreducible and distinct.

 In \cite[Lemmas~3.6 and~3.7]{ikkr}, it is proven that 
 \begin{equation}\label{eqn:DefGamma}
     \Gamma\coloneqq \{[A] \in \LG(\bw3V_6)\mid \sZ_A^{\ge 4} \ne\vide\}
 \end{equation}
is a divisor in $ \LG(\bw3V_6)$ which is distinct from O'Grady's irreducible divisor $\Delta$.\ We prove here a stronger version this result.

\begin{prop}\label{prop:gamma}
    The loci  $\Gamma$, $\Sigma$, and $\Delta$ are mutually distinct irreducible divisors  in $\LG(\bw3V_6) $. 
\end{prop}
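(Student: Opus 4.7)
The plan is to reduce the proposition to two remaining statements. From the cited references, $\Sigma$ and $\Delta$ are already known to be distinct irreducible divisors in $\LG(\bw3V_6)$ (see \cite{og, og2}), and $\Gamma$ is known to be a divisor distinct from $\Delta$ (see \cite{ikkr}). It therefore suffices to prove that $\Gamma$ is irreducible and that $\Gamma \neq \Sigma$.

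For the irreducibility of $\Gamma$, I would exploit the incidence variety $\cZ^{\ge 4} \subset \LG(\bw3V_6) \times \Gr(3, V_6)$ from Section~2, whose image under the first projection $\pi_1$ is exactly $\Gamma$.\ The second projection $\pi_2 \colon \cZ^{\ge 4} \to \Gr(3, V_6)$ has, over a point $[U]$, fiber equal to the Schubert-type subvariety $\{[A] \in \LG(\bw3V_6) : \dim(A \cap T_U) \ge 4\}$, which is a special Schubert variety in the Lagrangian Grassmannian, hence irreducible of codimension $\binom{5}{2} = 10$ (and so of dimension $45$). Since $\Gr(3, V_6)$ is irreducible and all fibers of $\pi_2$ are irreducible of constant dimension (they are $\GL(V_6)$-translates of one another), $\cZ^{\ge 4}$ is irreducible of dimension $54$, and hence so is $\Gamma = \pi_1(\cZ^{\ge 4})$.

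For $\Gamma \neq \Sigma$, since both are irreducible divisors of the irreducible variety $\LG(\bw3V_6)$, it suffices to exhibit a single $[A] \in \Gamma \smallsetminus \Sigma$. I would fix a point $[U_0] \in \Gr(3, V_6)$ and choose a $4$-dimensional subspace $K \subset T_{U_0}$ with $\P(K) \cap \cC_{U_0} = \vide$; this is possible by a dimension count, since $\cC_{U_0}$ has dimension $5$ in $\P(T_{U_0}) = \P^9$, so that a generic $\P^3\subset \P^9$ avoids it. Any Lagrangian $A$ containing $K$ then automatically satisfies $[U_0] \in \sZ^{\ge 4}_A$, so $[A] \in \Gamma$; the Lagrangians containing $K$ form the irreducible $21$-dimensional variety $\LG(K^\perp/K)$.

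The main obstacle will be to verify that a general $A \in \LG(K^\perp/K)$ has no decomposable vector in $\bw3V_6$ at all, not just no decomposable vector in $T_{U_0}$. A dimension count makes this plausible (the locus of Lagrangians containing a decomposable vector is the divisor $\Sigma$ in $\LG(\bw3V_6)$, which one expects to cut out a proper subvariety of $\LG(K^\perp/K)$), but making this rigorous requires either an explicit construction of such an $A$ in coordinates or a transversality argument showing $\LG(K^\perp/K) \not\subset \Sigma$, for instance by deforming a Lagrangian $[A_0] \notin \Sigma$ along a suitable family to one still containing $K$. Producing such an $[A]$ finishes the proof.
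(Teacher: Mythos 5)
Your irreducibility argument for $\Gamma$ is correct and is a slight repackaging of the paper's: the paper works with the three-factor incidence $\widetilde\Gamma=\{([A],[U],[B]):B\subset A\cap T_U\}$, fibered over the $\Gr(4,10)$-bundle $\Gr(4,T_\cU)\to\Gr(3,V_6)$ with fibers the irreducible $21$-dimensional $\LG(B^\perp/B)$, giving $\dim\widetilde\Gamma=33+21=54$; you fiber $\cZ^{\ge 4}$ directly over $\Gr(3,V_6)$, observing that each fiber is the irreducible Schubert variety $\{[A]:\dim(A\cap T_U)\ge 4\}\subset\LG(\bw3V_6)$ of codimension $\binom{5}{2}=10$, giving $\dim\cZ^{\ge 4}=45+9=54$. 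Both are valid; the paper's passage through $\Gr(4,T_\cU)$ is morally a resolution of $\cZ^{\ge 4}$ that sidesteps citing irreducibility of Lagrangian Schubert varieties, while your route is a touch more direct.

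The distinctness argument, however, has the gap you yourself flag, and it is a genuine one. Showing $\Gamma\ne\Sigma$ requires a point $[A]\in\Gamma\smallsetminus\Sigma$, but since $\Gamma$ and $\Sigma$ are both irreducible divisors of the same dimension, a dimension count alone cannot rule out $\Gamma=\Sigma$; you must actually exhibit a Lagrangian $A$ containing a fixed $K$ (disjoint from $\cC_{U_0}$) with \emph{no} decomposable vector, and the proposed transversality/deformation argument is not carried out. The paper does \emph{not} prove distinctness by such a direct construction: it defers entirely to Proposition~\ref{prop:imagediv}, which computes, via Gushel--Mukai periods, the relation $\rho\circ\overline\wp_2\circ\mu_2=\overline\wp_3\circ\mu_3$, and the irreducibility of Heegner divisors (Proposition~\ref{prop31}) that the images of $\Sigma$, $\Delta$, $\Gamma$ under the extended period map $\wp_3$ are the three distinct Heegner divisors $^{[3]}\cD_{4,8}^{(2)}$, $^{[3]}\cD_{4,10}^{(2)}$, $^{[3]}\cD_{4,12}^{(2)}$. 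This is a fundamentally different mechanism: it shows at once that all three loci are mutually distinct \emph{and} that each meets the domain of $\wp_3$ nontrivially, which in particular yields the nonemptiness of $\Gamma\smallsetminus\Sigma$ that you were trying to establish by hand. So to complete your proof you would either need to supply the missing explicit Lagrangian, or fall back on the period-map identification as the paper does.
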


\begin{proof}
  Consider   the closed subvariety  
    $$\widetilde\Gamma  \coloneqq \{([A], [U], [B])\in\LG(\bw3V_6)\times \Gr(3, V_6)\times \Gr(4, \bw3V_6)\mid B\subset A\cap T_U \},$$
   where $T_U$ was defined in~\eqref{deftu}, and the chain of morphisms
        $$
    \begin{array}{rcccl}
        \widetilde\Gamma  &\xrightarrow{\ q\ } & \Gr(4, T_\cU) &\lra &\Gr(3, V_6)\\
        ([A], [U], [B]) &\longmapsto &([U],[B]) &\longmapsto &[U].
    \end{array}$$
     where $\Gr(4,T_\cU)$ is the $\Gr(4,10)$-bundle over $\Gr(3, V_6)$ whose fiber at $[U]$ is equal to $\Gr(4,T_{U})$. 
     
    The fiber of $q$ over $B\subset T_U$ is isomorphic to the set of Lagrangian subspaces $A$ such that $B\subset A$.\ Since $B\subset T_U$ is totally isotropic, this fiber is isomorphic to the Lagrangian Grassmannian $\LG(B^\perp/B)$, smooth irreducible of dimension 21.

    Since $\Gr(4, T_\cU)$ is smooth irreducible of dimension $33$, it follows that $\widetilde\Gamma $ is also smooth irreducible, of dimension $54$.\ Projecting $\widetilde\Gamma $ to $\LG(\bw3V_6)$, we obtain that its image $\Gamma$ is also irreducible.

    
For the fact that $\Gamma$, $\Sigma$, and $\Delta$ are mutually distinct, we refer to Proposition~\ref{prop:imagediv} (proved independently of the rest of the paper, of course).
\end{proof}

\subsection{The divisor $\Gamma$: parameter count.}\label{sec:ParamCount}

Following \cite[Section~2.1]{og}, we show that for $[A]\in \Gamma$ general, $\sZ^{\ge 4}_A$ consists of a single point.

\begin{lemm}\label{lemma:parameterCountGamma+}
   The subvariety 
    $$
    \Gamma_+ = \{[A]\in \LG(\bw3V_6)\mid |\sZ_A^{\ge 4}|> 1\}
    $$
    of $\LG(\bw3V_6)$ has codimension at least $2$.\ Therefore, for $[A]\in \Gamma$ general, the scheme $\sZ^{\ge 4}_A$ consists of a single smooth point, which is the only  singular point of the scheme $\widetilde \sZ^{\ge 2}_A$, defined in Equation~\eqref{eqn:EPWcube}. 
\end{lemm}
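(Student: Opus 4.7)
The plan is to follow the parameter count of~\cite[Section~2.1]{og}. Introduce the incidence variety
\begin{equation*}
\widetilde \Gamma_+ \coloneqq \{([A],[U_1],[U_2]) \in \LG(\bw3V_6) \times \Gr(3,V_6)^2 \mid [U_1] \neq [U_2],\ \dim(A \cap T_{U_i}) \geq 4,\ i=1,2\},
\end{equation*}
whose first projection surjects onto $\Gamma_+$. Since $\dim \LG(\bw3V_6)=55$, it suffices to prove $\dim \widetilde \Gamma_+ \leq 53$. To bound this dimension, I will enlarge $\widetilde\Gamma_+$ by picking $4$-dimensional subspaces $B_i\subset A\cap T_{U_i}$, forming
\begin{equation*}
\widetilde J \coloneqq \{([A],[U_1],[U_2],[B_1],[B_2]) \mid [U_1] \neq [U_2],\ B_i \subset A \cap T_{U_i},\ \dim B_i = 4\},
\end{equation*}
which dominates $\widetilde\Gamma_+$, and then project $\widetilde J$ onto the space of tuples $(U_1,U_2,B_1,B_2)$ with $B_1+B_2$ isotropic; the fiber of this projection is the Lagrangian Grassmannian $\LG(B^\perp/B)$ of Lagrangian completions of $B \coloneqq B_1+B_2$.

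The key computation takes place over the open stratum where $U_1\cap U_2=0$, which has dimension $18$. On this locus, the decomposition $V_6 = U_1 \oplus U_2$ induces $\bw3V_6 = T_{U_1} \oplus T_{U_2}$ as transverse Lagrangian subspaces, with the symplectic form identifying $T_{U_2} \simeq T_{U_1}^\vee$. For $B_1\in\Gr(4,T_{U_1})$ (a $24$-dimensional choice), the isotropy condition on $B_1+B_2$ forces $B_2 \subset B_1^\perp\cap T_{U_2}$, a $6$-dimensional subspace (an $8$-dimensional choice in $\Gr(4,6)$). Since $B_1\cap B_2\subset T_{U_1}\cap T_{U_2}=0$, the subspace $B=B_1\oplus B_2$ has dimension $8$, so $\LG(B^\perp/B)$ is the Lagrangian Grassmannian of a $4$-dimensional symplectic space, of dimension~$3$. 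Summing,
\begin{equation*}
18 + 24 + 8 + 3 \;=\; 53.
\end{equation*}
A stratified version of the same count over $\{\dim(U_1\cap U_2)=m\}$ for $m>0$---where the codimension-$m^2$ loss in the space of pairs is balanced by the larger intersection $T_{U_1}\cap T_{U_2}$ and correspondingly larger Lagrangian completions $\LG(B^\perp/B)$---shows that the nongeneric strata also contribute dimension at most $53$. Hence $\dim\widetilde J \leq 53$, so $\dim\Gamma_+\leq 53$ and $\codim_{\LG(\bw3V_6)} \Gamma_+ \geq 2$.

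For the final assertions, a general $[A]\in\Gamma$ lies outside $\Gamma_+$ (as $\Gamma$ is a divisor by Proposition~\ref{prop:gamma} but $\Gamma_+$ has codimension at least $2$), so $\sZ_A^{\geq 4}$ consists of a single point, smooth by Theorem~\ref{theorem:ikkr}(d). That this is the unique singular point of $\widetilde\sZ_A^{\geq 2}$ follows from the identification of its singular locus with $g^{-1}(\sZ_A^{4})$ (item~(2) of the introduction) together with the fact that the double cover $g$ is branched over $\sZ_A^{\geq 3}\supset \sZ_A^{4}$, so that the preimage of a point of $\sZ_A^{4}$ is a single ramification point. The main obstacle I expect is the careful verification that the nongeneric strata ($\dim(U_1\cap U_2)>0$) do not exceed the dimension bound $53$; this requires controlling the dimensions of $T_{U_1}\cap T_{U_2}$ and of the Lagrangian completions $\LG(B^\perp/B)$ against the codimension $m^2$ of each stratum in $\Gr(3,V_6)^2$.
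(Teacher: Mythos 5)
Your approach is essentially the same as the paper's: you fiber the incidence variety over the auxiliary data of $4$-dimensional isotropic subspaces $B_i \subset T_{U_i}$ (the paper records $K_i = A\cap T_{U_i}$ via a map $\eta$ to $\Gr(4,\bw3V_6)^2\times\Gr(3,V_6)^2$, which is the same thing outside $\Sigma$), identify the fibers of the $A$-forgetting projection with $\LG(B^\perp/B)$, and add up dimensions. Your computation $18+24+8+3=53$ on the open stratum $U_1\cap U_2 = 0$ is correct and matches the paper's.

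However, the treatment of the nongeneric strata is a genuine gap, and your heuristic --- that the codimension-$m^2$ loss in pairs is ``balanced'' by larger $T_{U_1}\cap T_{U_2}$ and larger $\LG(B^\perp/B)$ --- is misleading and cannot substitute for the actual count. The paper stratifies by both $\ell = \dim(U_1\cap U_2)$ and $k = \dim(K_1\cap K_2)$ and obtains the bound $53 - \ell^2 - \tfrac{3}{2}k\bigl(k - \tfrac{2d(\ell)-3}{3}\bigr)$, whose value at $(\ell,k)=(1,1)$ is \emph{again exactly} $53$. So the dimension is \emph{not} monotonically decreasing away from the generic stratum: after dropping to $52$ at $(\ell,k)=(1,0)$, it climbs back to $53$ at $(\ell,k)=(1,1)$. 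This means the stratified count is not a formality one can wave away as ``balanced''; one must actually carry out the parameter count for each $(\ell,k)$, as the paper does. You flag this as the ``main obstacle'' at the end, and indeed that verification is the bulk of the paper's proof of the lemma.

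Two smaller points. First, the paper's $\widetilde\Gamma_+$ uses $\dim(A\cap T_{U_i}) = 4$, not $\geq 4$, and it then observes that $\Gamma_+$ is contained in the union of $\Gamma\cap\Sigma$ and $\pi_+(\widetilde\Gamma_+)$; since you only quotient by $\Sigma$ implicitly (through $\sZ^{\geq 5}_A = \varnothing$, which requires $A$ to have no decomposable vector), you should state that the subset $\Gamma_+\cap\Sigma\subset\Gamma\cap\Sigma$ already has codimension $\geq 2$ because $\Gamma$ and $\Sigma$ are distinct irreducible divisors (Proposition~\ref{prop:gamma}/\ref{prop:imagediv}). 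Second, your justification of the final sentence invokes item~(2) of the introduction, which the paper proves later; the paper itself simply states the ``therefore'' clause here without proof, relying on the subsequent sections, so this forward reference is acceptable.
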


\begin{proof}
Consider the locally closed subvariety
\begin{align*}
    \widetilde\Gamma_+ &= \{([A], [U_1], [U_2]) \mid [U_1]\neq [U_2],\ \dim(A\cap T_{U_1})= \dim(A\cap T_{U_2})=4 \}
\end{align*}
of $\LG(\bw3V_6)\times \Gr(3,V_6)^2$.\ 
It follows from Theorem~\ref{theorem:ikkr}(e) that 
 $\Gamma_+$ is contained in the union of $\Gamma\cap\Sigma$ and  the image of the first projection $\pi_+\colon  \widetilde\Gamma_+\to \LG(\bw3V_6)$.\ Since $\Gamma$ and $\Sigma$ are distinct irreducible divisors by Proposition~\ref{prop:gamma}, it is therefore enough to show that $\widetilde\Gamma_+$ has dimension at most $53$, .\  

We consider the morphism
\begin{align*}
    \eta\colon \widetilde\Gamma_+ &\lra \Gr(4, \bw3V_6)^2\times  \Gr(3,V_6)^2\\
    ([A], [U_1], [U_2]) &\longmapsto  ([A\cap T_{U_1}], [A\cap T_{U_2}], [U_1], [U_2]).
\end{align*} 
Since $A$ is a Lagrangian subspace, the image of $\eta$ is contained in
$$
    N = \left\{([K_1], [K_2], [U_1], [U_2]) \;\middle| \; 
    \begin{aligned}
        &[U_1]\neq [U_2], &[K_1]\in \Gr(4, T_{U_1}), \quad [K_2]\in \Gr(4, T_{U_2}),\\
        &K_1\perp K_2, &K_1\cap T_{U_2} = K_1\cap K_2 = K_2\cap T_{U_1}
    \end{aligned}\\
       \right\}.
$$
%
Observe that  the fiber of an element $([K_1], [K_2], [U_1], [U_2])$ of $N$ is isomorphic to 
$$\{[A]\in \LG(\bw3V_6)\mid K_1 = A\cap T_{U_1},\ K_2 = A\cap T_{U_2} \},$$
 which, since $K_1+K_2$ is totally isotropic, is contained in
 $\LG((K_1+K_2)^\perp/(K_1+K_2))$.\  
 Therefore, the fiber $\eta^{-1}([K_1], [K_2], [U_1], [U_2])$ has dimension at most $ \frac{(2+k)(3+k)}2$, where $k = \dim(K_1\cap K_2)$.


We stratify $N$ according to $k = \dim(K_1\cap K_2)$ and $\ell = \dim(U_1\cap U_2)$, and denote by 
$$N_{k, \ell} = \{([K_1], [K_2], [U_1], [U_2])\in N\mid\ell = \dim(U_1\cap U_2), \ k = \dim(K_1\cap K_2)\}$$
the stratum corresponding to $k$ and $\ell$.
Clearly we have the inclusion $K_1\cap K_2 \subset T_{U_1}\cap T_{U_2}$, where the dimension $d(\ell)$ of $T_{U_1}\cap T_{U_2}$  is $0$ for $\ell = 0$, and $2(\ell+1)$ for $\ell\in\{1,2\}$.\ Moreover, the morphism $\eta$ has equidimensional fibers over $N_{k, \ell}$.\ Thus, the dimension of $\widetilde\Gamma_+$ is the maximum of $\dim(\eta^{-1}(N_{k, \ell}))$ for $0\le k\le d(\ell)$ and $0\le \ell\le 2$, namely
\begin{equation}
    \dim(\widetilde\Gamma_+) = \max_{0\le k\le d(\ell)} \frac{(2+k)(3+k)}2 + \dim(N_{k,\ell}).
\end{equation}

We   count parameters to determine the dimension of $N_{k,\ell}$.\ The space of pairs $([U_1], [U_2])\in  \Gr(3,V_6)^2$ whose intersection has dimension $ \ell$ is a Schubert cell of dimension $18-\ell^2$.\ Having fixed such a pair $([U_1], [U_2])$, the quadruple $([K_1], [K_2], [U_1], [U_2])$ is in $N_{k, \ell}$ if and only if $K_1\cap T_{U_2}$ has dimension~$k$ and 
$$
K_1\cap T_{U_2} \subset K_2\subset K_1^\perp \cap T_{U_2}\ , \quad K_2\cap T_{U_1} = K_1\cap T_{U_2}.
$$
 Hence, given $[K] = [K_1\cap T_{U_2}]$ in $\Gr(k, T_{U_1}\cap T_{U_2})$, and $[K_1]$ in $\Gr(4-k, T_{U_1}/K)$, the point~$[K_2]$ lies in $\Gr(4-k, ( K_1^\perp \cap T_{U_2})/K)$.\  Note that, since $K_1\cap T_{U_2}$ has dimension $k$,  the space $K_1^\perp \cap T_{U_2}$ has dimension $6+k$.\  Putting all together, we obtain \begin{align*}
    \dim(N_{k, \ell}) &\le 18 - \ell^2 + k (d(\ell)-k) + 6(4-k) + (4-k)(2+k)\\
    &=50 - 2(k + 2)k + d(\ell) k - \ell^2.
\end{align*}  
Hence, the dimension of $\eta^{-1}(N_{k,\ell})$ is at most $53 - \ell^2 - \frac{3}2 k ( k - \frac{2d(\ell) - 3}3)$, whose maximum for $0\le k\le \max\{d(\ell),4\}$ and $0\le \ell \le 2$ is 53.\ Therefore, $\widetilde \Gamma_+$ has dimension at most 53, as wanted. 
\end{proof}
\section{The double cover $\widetilde \sZ^{\ge 2}_A\to \sZ^{\ge2}_A$}\label{se4}

In \cite[Theorem~5.7(2)]{dk2}, the authors construct, for each   $[A]\in \LG(\bw3V_6) $ with no decomposable vectors, a canonical double cover
$$
g\colon \widetilde \sZ^{\ge 2}_A \lra \sZ^{\ge 2}_A
$$
branched over $\sZ^{\ge 3}_A$.\ The variety $\widetilde \sZ^{\ge 2}_A$ is integral, normal, and smooth away from $g^{-1}(\sZ^{4}_A)$. 

When moreover $[A]\notin \Gamma$, this double cover coincides with the   EPW cube of \cite{ikkr} (see \cite[Lemma~5.8]{dk2}); it is therefore a   smooth \hk\ variety of $\KKK^{[3]}$-type with a polarization $H_A$ of square $4$ and divisibility $2$.\ In this section, we study the singular sixfold $\widetilde \sZ^{\ge 2}_A$ when $[A]\in \Gamma\setminus \Sigma$ and describe its   blowup    along its singular locus  and some  smooth \hk\ resolutions.

We will study these double covers and  the desingularization of  $\widetilde\sZ^{\ge 2}_A$  in families.\ So we start with the families
$$
\cZ^{\ge k} \lra  \LG(\bw3V_6) 
$$ 
defined in Section~\ref{localde}.\ 

Let $[A_0]\in \Gamma\setminus \Sigma$ and let $\cV\subset \LG(\bw3V_6)\setminus \Sigma$ be a (sufficiently small classical) open neighborhood of $[A_0]$.\ We set $\cZ_{\cV}^{\ge k} \coloneqq \cZ^{\ge k} \cap (\cV\times \Gr(3, V_6))$.\ As in the case of   EPW sextics (see the proof of \cite[Theorem~5.2]{dk2}), there exists a unique double cover 
        $$f_\cV \colon \widetilde \cZ_\cV \lra  \cZ^{\ge 2}_\cV$$
branched along $\cZ^{\ge 3}_\cV$, 
whose restriction over $\sZ^{\ge 2}_A$ is the morphism $g\colon \widetilde \sZ^{\ge 2}_A \to \sZ^{\ge 2}_A$.\

Given  $[U_0]\in  \sZ_A^{\ge 4}$, we constructed 
 in the proof of Proposition~\ref{prop:LocalStructureZ2} a smooth morphism
 \begin{equation*}\label{eqn:morphToS_2}  
 \Psi_K \colon \cZ^{\ge 2}\cap (\cV\times\cU) \lra \cS_{\le 2},
\end{equation*}
where $\cU$ is a  neighborhood of $ [U_0]$ in $  \Gr(3, V_6)$, and $\cS_{\le 2}$ is the scheme of $(4\times 4)$-matrices of rank~$\le 2$.\ We prove that the double cover~$f_\cV$ is the pullback by $\Psi_K$ of a certain  ``universal double cover'' of $\widetilde\cS_{\le 2}\to \cS_{\le 2}$  studied in Section~\ref{sec:CanonicalDoubleCover}.\ In particular, we obtain a description of the blowup of $\widetilde \cZ^{\ge 2}$ along its singular locus~$f_\cV^{-1}(\cZ^{\ge 4}_\cV)$, from which we deduce a description of the blowup of $\widetilde \sZ^{\ge 2}_A$.

 \subsection{Local model of double cover}\label{sec:CanonicalDoubleCover}
In this section, we consider a ``canonical example'' of double cover.\ Let $V$ be a vector space of dimension $n$ and consider the universal quadratic form
$$
q\colon  V\otimes \cO_{\Sym^2\!V^\vee} \lra  \!V^\vee\otimes \cO_{\Sym^2\!V^\vee}.
$$
 Denote by $\cS_{\le i}\subset \Sym^2\!V^\vee$ the  rank $i$ degeneracy locus of $q$,
 so that \begin{itemize}
    \item $\cS_{\le 2}$ is the affine cone over the secant variety to the second Veronese embedding $\P(V^\vee)\subset \P(\Sym^2\!V^\vee)$,
    \item $\cS_{\le 1}$ is the affine cone over the same embedding $\P(V^\vee)\subset \P(\Sym^2\!V^\vee)$,
    \item $\cS_{\le 0}=\{0\}$.
\end{itemize} 
In this situation, \cite[Theorem~3.1]{dk2} provides a canonical double cover $\widetilde f\colon \widetilde \cS_{\le 2} \to \cS_{\le 2}$.
    
\begin{lemm}\label{lemma:CanonicDoubleCover}
    The affine variety $(V^\vee\otimes V^\vee)_1 = \{\mu \in V^\vee\otimes V^\vee \mid \rk(\mu)\le 1\}$ is the cone over the Segre variety $\P(V^\vee)\times \P(V^\vee)$.\ The map 
    \begin{align*}
        g_2 \colon (V^\vee\otimes V^\vee)_1 & \lra \cS_{\le 2}\\
                    \mu \quad &\longmapsto \mu+ {\iota(\mu)} ,
    \end{align*}
   where $\iota$ is the involution that exchanges the two factors,   is the canonical double cover $\widetilde f\colon \widetilde \cS_{\le 2} \to \cS_{\le 2}$.\ In particular, the unique singular point of the variety $\widetilde \cS_{\le 2}$ is $0$.
\end{lemm}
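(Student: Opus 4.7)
The plan is to identify $g_2$ with the canonical double cover $\widetilde f$ in three stages: describe the geometry of the source $(V^\vee\otimes V^\vee)_1$, verify directly that $g_2$ is a finite degree-$2$ cover of $\cS_2$ branched along $\cS_1$, and then invoke the uniqueness part of \cite[Theorem~3.1]{dk2} to conclude.

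First I would observe that $(V^\vee\otimes V^\vee)_1$ is the image of $V^\vee\times V^\vee\to V^\vee\otimes V^\vee$, $(\alpha,\beta)\mapsto \alpha\otimes\beta$, hence the affine cone over the Segre embedding $\P(V^\vee)\times\P(V^\vee)\hra \P(V^\vee\otimes V^\vee)$. Since Segre varieties are smooth and projectively normal, $(V^\vee\otimes V^\vee)_1$ is an integral, normal affine variety whose only singular point is the vertex $0$; transporting this across the isomorphism $g_2\isom \widetilde f$ will give the last assertion of the lemma.

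Next I would verify the map-theoretic properties of $g_2$. Surjectivity onto $\cS_2$ is immediate, since every symmetric tensor of rank $\le 2$ can be written as $\alpha\beta+\beta\alpha$ (taking $\alpha=\beta$ up to scalar in the rank-$1$ case). For finiteness, writing $\mu=(\mu_{ij})$ in coordinates and using the Segre relation $\mu_{ij}\mu_{ji}=\mu_{ii}\mu_{jj}$ on $(V^\vee\otimes V^\vee)_1$, together with $q_{ij}=\mu_{ij}+\mu_{ji}$ and $\mu_{ii}=q_{ii}/2$, one obtains the explicit integrality relation
\[
(\mu_{ij}-\mu_{ji})^2 = (\mu_{ij}+\mu_{ji})^2 - 4\mu_{ii}\mu_{jj} = q_{ij}^2-q_{ii}q_{jj},
\]
exhibiting each $\mu_{ij}$ as a root of a monic quadratic over the coordinate ring of $\cS_2$. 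A generic fiber over $q=\alpha\beta+\beta\alpha\in\cS_2$ with $\alpha,\beta$ linearly independent consists of the two distinct rank-one tensors $\alpha\otimes\beta$ and $\beta\otimes\alpha$, so $g_2$ is a degree-$2$ cover; the fiber over a rank-one form $\alpha^2\in\cS_1$ collapses to the single symmetric tensor $\alpha\otimes\alpha$, so $g_2$ is branched precisely along $\cS_1$.

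Finally I would apply the uniqueness part of \cite[Theorem~3.1]{dk2}, which asserts that the canonical double cover of a degeneracy locus of this kind, with integral normal total space and branched along the next stratum, is unique up to isomorphism over the base, yielding $g_2\isom\widetilde f$. The main anticipated obstacle is fitting $g_2$ into the precise framework of \cite[Theorem~3.1]{dk2}: that theorem characterizes $\widetilde f$ via a specific splitting $\widetilde f_*\cO_{\widetilde\cS_2}\isom \cO_{\cS_2}\oplus\cL$ whose squaring multiplication $\cL^{\otimes 2}\to \cO_{\cS_2}$ recovers the section cutting out $\cS_1$ inside $\cS_2$. To match this, I would identify $\cL$ with the $(-1)$-eigensheaf of the transposition involution $\mu\mapsto{}^t\mu$ acting on $(g_2)_*\cO_{(V^\vee\otimes V^\vee)_1}$, and verify—using the explicit relation $(\mu_{ij}-\mu_{ji})^2=q_{ij}^2-q_{ii}q_{jj}$ above—that its squaring map coincides with the defining section of $\cS_1$. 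This is a bookkeeping verification rather than a conceptual difficulty, but it is the only step that actually pins down the canonical structure, rather than merely some degree-$2$ cover branched along $\cS_1$.
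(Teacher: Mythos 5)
Your high-level strategy is sound: exhibit $g_2$ as a quotient by the transposition involution, observe that $(V^\vee\otimes V^\vee)_1$ is normal (being the affine cone over the Segre variety), and reduce to comparing the reflexive sheaf of $g_2$ with that of $\widetilde f$. The geometric description of the source, the finiteness/degree verification, and the identification of the reflexive sheaf of $g_2$ with the $(-1)$-eigensheaf $R_-$ of $(g_2)_*\cO$ all appear in the paper's proof. However, there is a genuine gap in the step that does the actual work.

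First, the ``uniqueness'' you invoke in your first pass is not what \cite[Theorem~3.1]{dk2} says; that theorem constructs a specific double cover via the reflexive sheaf $\det(\Im(q\vert_{\cS_2^o}))$, it does not assert that every integral normal double cover of $\cS_2$ étale over $\cS_2^0$ is isomorphic to it. You do flag this yourself at the end, but your proposed fix---verifying that the squaring map on $R_-$ ``coincides with the defining section of $\cS_1$''---is not the right target. The locus $\cS_1\subset\cS_2$ has codimension $\ge 2$, so it is not cut out by a single section, and there is no branch divisor whose equation a squaring map should recover; your explicit relation $(\mu_{ij}-\mu_{ji})^2 = q_{ij}^2-q_{ii}q_{jj}$ is a useful integrality identity but establishes nothing about the sheaf $\det(\Im q)$. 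What is actually required, and what the paper carries out, is a direct $R_+$-module isomorphism $R_-\cong\det\bigl(\Im(q\vert_{\cS_2^o})\bigr)$: the paper shows $R_-$ is generated by $[\bw2V]$ over $R_+$ (by an inductive identity), and then factors $\bw2 q$ as a surjection $\bw2 V\otimes R_+\to R_-$ followed by an injection $R_-\hra \bw2(V^\vee\otimes R_+)$, identifying $\Im(\bw2 q\vert_{\cS_2^o})$ with $R_-$. This computation is the heart of the lemma, and your proposal neither performs it nor offers a correct substitute for it.
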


\begin{proof}
    We follow the proof of \cite[Lemma~3.4]{dk2}.\ Note that $f$ is a double cover because it is the quotient for the action of the involution $\iota$ on $(V^\vee\otimes V^\vee)_1$; it is moreover \'etale over~$\cS_{\le 2}^0$.\ Thus, since $(V^\vee\otimes V^\vee)_1$ is normal, in order to show that $g_2$ and $\widetilde f$ are isomorphic, it is enough to show that their associated  reflexive sheaves are the same.

    Let $R$ be the ring of functions of the affine variety $(V^\vee\otimes V^\vee)_1$.\ If we denote by $S^\bullet(V\otimes V)$ the symmetric algebra of $V\otimes V$,   we can identify $R$ with the quotient $S^\bullet(V\otimes V)/I$, where $I$ is the ideal generated by $\{(v\otimes w)(v'\otimes w') - (v\otimes w')(v'\otimes w) \mid v,w,v',w'\in V\}$. {Note that there is an isomorphism 
    \begin{equation}\label{eqn:coordRing}
       R\simeq \bigoplus_{i=0}^\infty S^i(V)\otimes S^i(V)
    \end{equation}
    that sends the class of a monomial $(v_1\otimes w_1)\cdots (v_j\otimes w_j)\in S^j(V\otimes V)$ to $(v_1\cdots v_j)\otimes (w_1\cdots w_j)$.}

    Given a subset~$P$ of $S^\bullet(V\otimes V)$, we denote by $[P]$ its image in $^\bullet S(V\otimes V)/I$.

 The action of $\Z/2\Z$ on $(V^\vee\otimes V^\vee)_1$ induces an action on $R$  given by the involution $\iota$ that sends $[v\otimes w]$ to $[w\otimes v]$.\ We have a decomposition 
    \begin{equation}\label{eqn:RefSheafDoubleCover}
        R = R_+\oplus R_-,
    \end{equation}
    where $R_+$ is the invariant part and $R_-$ is the antiinvariant part with respect to this action.\ 
Under the isomorphism \eqref{eqn:coordRing}, their graded part are given by  
    $$
        R_+ = \bigoplus_{i=0}^\infty S^2(S^iV)\quad \text{and}\quad
        R_- = \bigoplus_{i=0}^\infty \bw2(S^iV).
    $$
     Note that $R_-$ is the vector space generated by $[\alpha - \iota(\alpha)]$, for all monomials $\alpha$ in $S^\bullet(V\otimes V)$.\ Any monomial $\alpha$ in $S^j(V\otimes V)$ can be written as $\alpha = (v\otimes w)\beta$, where $\beta\in S^{j-1}(V\otimes V)$.\ Thus, by induction on $j$ using the   equality
$$
     2(\alpha - \iota(\alpha)) = (v\otimes w - w\otimes v)(\beta + \iota(\beta)) + (v\otimes w + w\otimes v)(\beta - \iota(\beta)),
 $$
 we see that $R_-$ is generated by $[\bw2V]$ as an $R_+$-module.
    
     Then $\cS_{\le 2} = \Spec(R_+)$,
      the direct sum \eqref{eqn:RefSheafDoubleCover} provides the decomposition of $g_{2,*}\cO_{(V^\vee\otimes V^\vee)_1}$ into invariant and antiinvariant parts, and the reflexive sheaf associated to $g_2$ is the sheaf associated with the module $R_-$. 

     The restriction of $q$ to $\cS_{\le 2}$ is induced by the morphism 
    $$
    \begin{array}{rcl}
      q\colon  V\otimes R_+  &\lra &V^\vee \otimes R_+ = \Hom(V\otimes R_+, R_+)\\
        v\otimes 1 &\longmapsto &(w\otimes 1\mapsto \frac{1}{2}[v\otimes w + w\otimes v])
    \end{array}
    $$
   of   free $R_+$-modules.\ Set $\cS_{2}\coloneqq \cS_{\le 2}\setminus \cS_{\le 1}$.\ We show that the sheaf $  \det(\Im(q\vert_{\cS_{2}}))$ (which is, by the construction of $\widetilde f$ in \cite[Theorem~3.1]{dk2}, the reflexive sheaf associated to $\widetilde f$) is the sheaf associated to the $R_+$-module $R_-$.\ Indeed, over $\cS_{2}$, the image of $q$ is locally free of rank $2$, thus 
    $$
        \det(\Im(q\vert_{\cS_2})) = \bw2\Im( q\vert_{\cS_2}) = \Im\bigl(\bw2q\vert_{\cS_2}\bigr).
    $$
  The morphism $$\bw2q \colon \bw2V\otimes R_+ \lra \Hom(\bw2V\otimes R_+, R_+)$$ sends an element $(v_1\wedge v_2)\otimes 1\in \bw2V\otimes R_+$ to the dual form defined by
  $$
  (w_1\wedge w_2)\otimes 1 \longmapsto q(v_1, w_1)q(v_2, w_2) - q(v_1, w_2)q(v_2, w_1).
  $$ 
  By developing the latter expression, we obtain that $\bw2q$ factors (up to a constant) as 
  $$
    \begin{array}{rcccl}
        \bw2 V\otimes R_+  &\lra & R_-&\lra&\Hom(\bw2V\otimes R_+, R_+)\\
        (v_1\wedge v_2)\otimes 1 &\longmapsto& v_1\wedge v_2&\longmapsto &\bigl((w_1\wedge w_2)\otimes 1\mapsto -(w_1\wedge w_2)(v_1\wedge v_2)\bigr),
    \end{array}
    $$
    where we identify  $\bw2V$ with $(R_-)_1$.
    
    Note that the first morphism is surjective, since $R_-$ is generated by $\bw2V$ as an $R_+$-module, while the second morphism is injective.\ Thus $\Im\bigl(\bw2q\vert_{\cS_2}\bigr)$ is the sheaf associated to~$R_-$, as wanted.
\end{proof}


 Let $\rho\colon \widetilde T \to \widetilde\cS_{\le 2}$ be the blowup  of the unique singular point $0$ of $\widetilde \cS_{\le 2}$ and let $H\subset \Sym^2\!V^\vee$ be a general hyperplane.  

\begin{prop}\label{prop:BlowUpCoverS2}
    {The exceptional divisor $E$ of $\rho$ is isomorphic to $\P(V^\vee)\times\P(V^\vee)$, and the normal bundle of $E$ in $\widetilde T$ is isomorphic to $\cO_{\P(V^\vee)\times \P(V^\vee)}(-1, -1)$.
    
    Moreover, the intersection of  the strict transform of $g_2^{-1}(H)$ with $E$ is a general hyperplane section of $E$.}
\end{prop}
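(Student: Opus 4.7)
The plan is to identify $\widetilde\cS_2 = (V^\vee \otimes V^\vee)_1$ with the affine cone over the Segre embedding
$$
\P(V^\vee) \times \P(V^\vee) \lhra \P(V^\vee \otimes V^\vee),
$$
whose unique singular point is the vertex $0$ (consistent with Lemma~\ref{lemma:CanonicDoubleCover}). I would then invoke the classical description of the blowup of an affine cone at its vertex: for a projective variety $Y \subset \P^N$, the blowup of the affine cone $C(Y) \subset \A^{N+1}$ at the origin is the total space of the line bundle $\cO_{\P^N}(-1)\vert_Y$, with exceptional divisor the zero section (isomorphic to $Y$) and normal bundle equal to this line bundle itself. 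Applied to the Segre variety, using that $\cO_{\P(V^\vee \otimes V^\vee)}(-1)$ restricts to $\cO(-1,-1)$ on $\P(V^\vee) \times \P(V^\vee)$, this yields the first assertion of the proposition.

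For the second assertion, I would describe $g_2^{-1}(H)$ explicitly. The symmetrization map $V^\vee \otimes V^\vee \to \Sym^2\!V^\vee$, $\mu \mapsto \mu + {}^t\mu$, is a surjective linear map. A hyperplane $H \subset \Sym^2\!V^\vee$ is defined by a non-zero symmetric tensor $\phi \in \Sym^2\!V$, and its preimage is a hyperplane $\widetilde H \subset V^\vee \otimes V^\vee$ defined by $\phi$ viewed as an element of $V \otimes V$ under the natural inclusion $\Sym^2\!V \hookrightarrow V \otimes V$. Thus $g_2^{-1}(H) = \widetilde H \cap (V^\vee \otimes V^\vee)_1$ is the intersection of our affine cone with a hyperplane through the vertex, and a standard calculation in the blowup (where the exceptional divisor is the projectivization of the tangent cone at the origin) shows that the strict transform of $g_2^{-1}(H)$ meets $E$ in $\P(\widetilde H) \cap (\P(V^\vee) \times \P(V^\vee))$, i.e.\ in the $(1,1)$-divisor on the Segre variety cut out by $\phi$.

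The main subtlety — and it is minor — is that as $H$ ranges over hyperplanes in $\Sym^2\!V^\vee$, the defining tensor $\phi$ lies in the proper subspace $\Sym^2\!V \subset V \otimes V$, so the resulting hyperplane sections of the Segre do not form a dense family among all $(1,1)$-divisors.\ I would handle this by observing that for $\phi \in \Sym^2\!V$ non-degenerate — which holds generically — the corresponding $(1,1)$-divisor on $\P(V^\vee) \times \P(V^\vee)$ is smooth, hence isomorphic to the (unique up to isomorphism) smooth $(1,1)$-divisor, which is the incidence variety.\ This is enough for the intended use in the paper, where the statement is applied to identify the exceptional divisor of the blowup of $\widetilde\sZ^{\ge 2}_A$ at a point of $\sZ^4_A$ with $I\subset \P^3\times(\P^3)^\vee$.
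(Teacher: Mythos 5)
Your proof takes essentially the same route as the paper's: identify $\widetilde\cS_2$ with the affine cone over the Segre variety $\P(V^\vee)\times\P(V^\vee)$, use the standard description of the blowup of a cone at its vertex (total space of $\cO(-1)\vert_{\text{Segre}} = \cO(-1,-1)$), and compute $g_2^{-1}(H)$ as the intersection of the cone with a hyperplane through the vertex, whose strict transform meets the exceptional Segre in the $(1,1)$-divisor $\{x\cdot y = 0\}$. Your closing remark — that the $(1,1)$-forms arising from symmetric tensors $\phi\in\Sym^2 V\subset V\otimes V$ form a proper subspace, but that any non-degenerate such form gives a divisor projectively equivalent to the general $(1,1)$-divisor — spells out a point the paper leaves implicit and is a welcome clarification rather than a divergence.
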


\begin{proof}
    {The description in Lemma~\ref{lemma:CanonicDoubleCover} of $\widetilde\cS_{\le 2}$ as the cone over the Segre variety $\P(V^\vee)\times \P(V^\vee)$  implies the description of the exceptional divisor $E$ and its normal bundle.
    
    If we choose coordinates $(x_i)_{1\le i  \le n}$ on $ V^\vee$, which induce  coordinates $(s_{ij})_{1\le i\le j \le n}$ on $\Sym^2V^\vee$, then $g_2^{-1}(H)$ can be identified with the cone over the preimage of the hyperplane $s_{11}+\cdots+s_{nn}=0$ via the morphism
    \begin{align*}
        \P(V^\vee)\times  \P(V^\vee) &\lra  \P(\cS_{\le 2})\subset \P(\Sym^2V^\vee)  \\
        ([(x_i)], [(y_j)]) &\longmapsto [(x_iy_j+y_jx_i)]_{1\le i\le j \le n},
    \end{align*}
    which is given by the equation $x_1y_1+\cdots+x_ny_n=0$.\ Thus, the strict transform of $g_2^{-1}(H)$ intersects the exceptional divisor $\P(V^\vee)\times  \P(V^\vee)$ as a general hyperplane section.}
\end{proof}


\subsection{Singularities of $\widetilde \sZ^{\ge 2}_A$ for $[A]$ in $\Gamma\smallsetminus\Sigma$ and simultaneous resolution}\label{sect42}

 We now go back to the double cover
  $$  \widetilde \cZ_\cV \xrightarrow{\ f_\cV\ }  \cZ^{\ge 2}_\cV\lra\cV$$
  branched along $\cZ^{\ge 3}_\cV$ mentioned at the beginning of Section~\ref{se4} (recall that $\cV\subset \LG(\bw3V_6)\setminus \Sigma$ is a  sufficiently small classical  open neighborhood of a given point $[A_0]\in \Gamma\smallsetminus\Sigma$).\  The next theorem explains the geometry of $  \widetilde \cZ_\cV $; note that the situation is completely analogous (in dimension 6 instead of 4) to  the situation for double EPW sextics described in \cite[Claim~3.8]{og}. 

\begin{theo}\label{theo:blowUpFamily}
    With notation  as previously, we have the following.
    \begin{itemize}
        \item[\rm(a)]   The singular locus of $\widetilde \cZ_\cV $ is $f_\cV^{-1}(\cZ^{\ge 4}_\cV)$ and it is finite over $\cV$.
         \item[\rm(b)] The blowup  $$\rho_\cV \colon \widetilde \cX_\cV \lra \widetilde \cZ_\cV$$
      of    $f_\cV^{-1}(\cZ^{\ge 4}_\cV)$ in $\widetilde \cZ_\cV$ is smooth.\ 
       \item[\rm(c)] The morphism $f_\cV\circ \rho_\cV$ restricted to the exceptional divisor $E_\cV$ of $\rho_\cV$ induces a locally trivial fibration
        $$
            E_\cV \lra \cZ^{\ge 4}_\cV 
        $$
        with fibers $ \P^3\times \P^3$, and the   restriction of the normal bundle to $E_\cV$ in $\widetilde \cX_\cV$ to each  fiber  is $\cO(-1,-1)$.
          \item[\rm(d)] 
        For any $[A]\in\cV\cap\Gamma$, the strict transform $\widetilde\sX_{A}$ 
       of $\widetilde\sZ^{\ge 2}_A\subset \widetilde \cZ_\cV$ is smooth and intersects~$E_\cV$ in a general hyperplane section of $\P^3\times \P^3$. 
    \end{itemize}
\end{theo}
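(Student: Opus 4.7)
The strategy is to reduce the theorem to the analytic local model of Section~\ref{sec:CanonicalDoubleCover}, using the smooth morphism $\Psi_K$ from Proposition~\ref{prop:LocalStructureZ2} to transport the structure of the canonical double cover $g_2\colon \widetilde \cS_2\to \cS_2$ onto $\widetilde \cZ_\cV$. Fix a point $([A_0],[U_0])\in\cZ^{4}$ with $[A_0]\in\Gamma\smallsetminus\Sigma$ and set $K = A_0\cap T_{U_0}$, which has dimension~$4$. Proposition~\ref{prop:LocalStructureZ2} (applied with $\ell=4$), on a suitable analytic neighborhood $\cV\times\cU$ of $([A_0],[U_0])$, produces a smooth morphism
$$\Psi_K\colon \cV\times \cU\lra \Sym^2\!K^\vee$$
such that $\cZ^{\ge j}\cap (\cV\times\cU)= \Psi_K^{-1}(\cS_{4-j})$ for $j\in\{2,3,4\}$. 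The pullback along $\Psi_K$ of $g_2$ is a normal double cover of $\cZ^{\ge 2}\cap (\cV\times\cU)$ branched over $\cZ^{\ge 3}\cap (\cV\times\cU)$ whose associated reflexive sheaf is pulled back from the one characterizing $g_2$; the uniqueness of canonical double covers in \cite[Theorem~3.1]{dk2} therefore identifies it with $f_\cV$ on this neighborhood. Consequently, analytically locally around any point of $f_\cV^{-1}(\cZ^{\ge 4}_\cV)$, the space $\widetilde \cZ_\cV$ is isomorphic to a smooth factor times $\widetilde\cS_2$.

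Items (a), (b), and (c) are now essentially translations of Lemma~\ref{lemma:CanonicDoubleCover} and Proposition~\ref{prop:BlowUpCoverS2}. The unique singular point of $\widetilde\cS_2$ is its vertex, hence the singular locus of $\widetilde \cZ_\cV$ is $f_\cV^{-1}(\cZ^{\ge 4}_\cV)$; this locus is finite over $\cV$ because $\cZ^{\ge 4}_\cV\to \cV$ has finite fibers by Theorem~\ref{theorem:ikkr}(d) and $f_\cV$ is finite, giving (a). Blowing up converts each local factor $\widetilde\cS_2$ into the total space of $\cO_{\P^3\times\P^3}(-1,-1)$, so $\widetilde \cX_\cV$ is smooth, and the exceptional divisor $E_\cV$ is an analytically locally trivial $\P^3\times\P^3$-bundle over $\cZ^{\ge 4}_\cV$ with the claimed normal bundle on each fiber; this proves (b) and (c).

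For (d), the inclusion $\sZ^{\ge 2}_A\hookrightarrow \cZ^{\ge 2}_\cV$ corresponds, under the local model, to the linear hyperplane section cut out by $\Im(r_K)\cap \cS_2$ (by the last part of Proposition~\ref{prop:LocalStructureZ2}), where $\Im(r_K)\subset \Sym^2\!K^\vee$ is a hyperplane of maximal rank by Lemma~\ref{lem:GeneralHyper}. Applying Proposition~\ref{prop:BlowUpCoverS2} then yields that $\widetilde \sX_A$ meets $E_\cV$ in a general hyperplane section of $\P^3\times \P^3$. Smoothness of $\widetilde \sX_A$ follows from the local model: away from $E_\cV$ it coincides with the smooth locus of $\widetilde \sZ^{\ge 2}_A$, while along $E_\cV$ its local model is a smooth factor times the strict transform of $g_2^{-1}(H\cap \cS_2)$, which is smooth because its intersection with the exceptional Segre is a smooth divisor and the blowup of the unique singular vertex resolves the only singularity.

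The main subtlety I expect is the careful application of the uniqueness statement of \cite[Theorem~3.1]{dk2} in this relative setting---verifying that the pulled-back double cover is indeed the canonical one branched over $\cZ^{\ge 3}_\cV$, so that $f_\cV$ truly agrees with it on the neighborhood---together with checking that the local analytic identifications glue coherently, so that the smoothness of $\widetilde \cX_\cV$ and the global bundle structure $E_\cV\to \cZ^{\ge 4}_\cV$ with fiber $\P^3\times\P^3$ are genuine global assertions rather than merely pointwise ones.
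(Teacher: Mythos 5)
Your proof takes essentially the same route as the paper's: reduce everything to the analytic local model via the smooth morphism $\Psi_K$ of Proposition~\ref{prop:LocalStructureZ2}, identify $f_\cV$ with the pullback of the canonical double cover $g_2\colon\widetilde\cS_2\to\cS_2$ by uniqueness, and then translate the conclusions of Lemma~\ref{lemma:CanonicDoubleCover} and Proposition~\ref{prop:BlowUpCoverS2}. The subtle point you flag about invoking \cite[Theorem~3.1]{dk2} to match the pulled-back cover with $f_\cV$ on the neighborhood is indeed the crux, and your reasoning there is sound.

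There is one small but genuine gap in your treatment of item~(a). Your local model places a smooth factor times $\widetilde\cS_2$ only around points of $f_\cV^{-1}(\cZ^{\ge 4}_\cV)$, from which you correctly conclude that every such point is singular. But the assertion ``the singular locus is $f_\cV^{-1}(\cZ^{\ge 4}_\cV)$'' also requires the reverse containment: that $\widetilde\cZ_\cV$ is smooth at points lying over $\cZ^{\ge 2}_\cV\smallsetminus \cZ^{\ge 4}_\cV$. Near a point of $\cZ^3_\cV$ the local model of $\cZ^{\ge 2}_\cV$ is a smooth factor times $\cS_1\subset\Sym^2K^\vee$ with $\dim K=3$, and the branch locus $\cZ^{\ge 3}_\cV$ then has codimension $3$; smoothness of the associated double cover there is not immediate from the Segre-cone analysis of Section~\ref{sec:CanonicalDoubleCover}. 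The paper closes this by noting that over any open set disjoint from $\cZ^{\ge 4}_\cV$, uniqueness forces $\widetilde\cZ_\cV$ to agree with the double cover constructed in \cite{dk2}, whose smoothness there is \cite[Corollary~4.8]{dk2}. Adding that citation completes (a); the rest of your argument stands.
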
  

\begin{proof}
For any $([A],[U])\not\in \cZ^{\ge 4}_\cV$, consider an open neighborhood $\cV'\times \cU\subset \cV\times \Gr(3, V_6)$ of $([A], [U])$ such that  $\cZ^{\ge 4}\cap (\cV'\times \cU)=\varnothing$.\ By uniqueness of the double cover constructed in \cite{dk2}, the variety $\widetilde \cZ_\cV\cap (\cV'\times\cU)$ is the double cover of $\cZ^{\ge 2}\cap (\cV'\times \cU)$, hence is smooth by  \cite[Corollary~4.8]{dk2}.\  
Therefore, the singular locus of $\widetilde \cZ_\cV$ is contained in $f^{-1}(\cZ^{\ge 4}_\cV)$. 

 Let us work now around a point $([A_0], [U_0])$ of $\cZ^{\ge 4}_\cV$.\  By Proposition~\ref{prop:LocalStructureZ2} (with $k =2$ and $\ell = 4$),  the analytic germ of $\cZ^{\ge 2}_\cV$ at that point is the product of a smooth germ   and the germ of $\cS_{\le 2}$ at $0$, where $\cS_{\le 2}\subset\Sym^2(A_0\cap T_{U_0})^\vee$ is the subscheme of quadratic forms of rank $ \le 2$.\ In particular, locally, we have a diagram  
$$
    \begin{tikzcd}
    \widetilde \cZ_\cV \ar[r] \ar[d, "f_\cV"'] & \widetilde \cS_{\le 2} \ar[d, "g_2"]\\
    \cZ^{\ge 2}_\cV  \ar[r, " \Psi_K"] &\cS_{\le 2},
\end{tikzcd}
    $$
 where $\Psi_K$ is smooth and $g_2$ is the canonical double cover of $\cS_{\le 2}$ (see Lemma~\ref{lemma:CanonicDoubleCover}).\ Moreover, the variety $\widetilde \sZ^{\ge 2}_{A}$ is the product of a smooth germ with $ g_2^{-1}(\Im(r_K)\cap \cS_{\le 2})$, where $\Im(r_K)\subset \Sym^2(A_0\cap T_{U_0})^\vee$ is a general hyperplane (Lemma~\ref{lem:GeneralHyper}). 

Therefore, it is enough to show (a), (b), (c), and   (d) for  the scheme $ \widetilde \cS_{\le 2} $  described in Lemma~\ref{lemma:CanonicDoubleCover}) and the preimage of a general hyperplane section of $\cS_{\le 2}$.\ 
This was done in Section~\ref{sec:CanonicalDoubleCover}. 
\end{proof}

Given $[A_0]\in \Gamma\setminus \Sigma$ with $\sZ^{\ge 4}_{A_0}=\{z_1,\dots,z_r\}$, we have shown that the blowup of $\widetilde\sZ^{\ge2}_{A_0}$ along it singular locus $\sZ^{\ge 4}_{A_0}$ is 
$$
\tau\colon \widetilde \sX_{A_0} \lra \widetilde\sZ^{\ge2}_{A_0},
$$
with exceptional divisor $E=E_1\sqcup \cdots\sqcup E_r$, the disjoint union of $r$ copies of the incidence variety $I\subset \P^3\times (\P^3)^\vee$ (this is   a general hyperplane section), and $\cO(E)\vert_{E_i}=\cO_{E_i}(-1, -1) $.\\

The situation is therefore exactly as in \cite[Section~3.2]{og}, which we follow without repeating the proofs.\ For each $i\in\{1,\dots, r\}$,   choose a projection $E_i\to \P^3$.\ We call that a choice of projections $\eps$
 for $\sZ^{\ge 2}_{A_0}$.\ 

 \begin{prop}\label{prop:contraction}
        Let $[A_0]\in \Gamma\setminus \Sigma$ and let $\eps$ be a choice of $\P^3$-fibrations for $\sZ^{\ge 2}_{A_0}$.\ Then, there exists a      classical  open neighborhood $\cV$ of $[A_0]$ in $ \LG(\bw3V_6)\setminus \Sigma$ such that
        \begin{itemize}
        \item[\rm(a)]  There is a factorization
 $$
        \rho_\cV \colon \widetilde \cX_\cV\xrightarrow{\ d_\eps\ }  \cX_\cV^\eps \xrightarrow{\ c_\eps\ } \widetilde \cZ^{\ge 2}_\cV,
    $$
    where $\cX_\cV^\eps$  is a complex manifold  and $d_\eps$ induces on each $E_i\subset \widetilde \sX_{A_0}\subset \widetilde \cX_\cV$, the chosen projection $E_i\to \P^3$,
     \item[\rm(b)] The composition 
 $$
        \varphi_\cV \colon \cX_\cV^\eps \xrightarrow{\ c_\eps\ } \widetilde \cZ^{\ge 2}_\cV \lra \cV 
    $$
    is smooth  and
     \begin{itemize}
        \item the fiber of $[A]\in \cV\smallsetminus \Gamma$ is the EPW cube $\widetilde\sZ^{\ge2}_{A}$,
        \item the fiber $\sX_{A}^\eps$ of $[A]\in \cV\cap \Gamma$ is a small resolution of the singular EPW cube $\widetilde\sZ^{\ge2}_{A}$, with exceptional locus a disjoint union of $\P^3$ (one for each singular point of $\widetilde\sZ^{\ge2}_{A}$).
         \end{itemize}
    \end{itemize}
    \end{prop}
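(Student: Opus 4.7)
My plan is to follow closely the approach of~\cite[Section~3.2]{og}, producing $\cX_\cV^\eps$ by a local-to-global contraction of one of the two $\P^3$-rulings of the exceptional divisor $E_\cV\subset \widetilde\cX_\cV$.

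\textbf{Local model and its small resolutions.}
First, I would observe that by Theorem~\ref{theo:blowUpFamily}, Proposition~\ref{prop:LocalStructureZ2}, and Lemma~\ref{lemma:CanonicDoubleCover}, each singular point $\widetilde z_i = g^{-1}(z_i)$ admits a classical open neighborhood $U_i\subset \widetilde\cZ_\cV$ analytically isomorphic to a product $M_i\times \widetilde\cS_2$, where $M_i$ is a smooth germ and $\widetilde\cS_2$ is the affine cone over the Segre variety $\P(V^\vee)\times \P(V^\vee)$, with $V$ a four-dimensional vector space.\ Correspondingly, $\widetilde\cX_\cV\vert_{U_i}\cong M_i\times \Bl_0\widetilde\cS_2$, and $E_\cV\vert_{U_i}$ is $M_i\times \P(V^\vee)\times \P(V^\vee)$ with normal bundle $\cO(-1,-1)$ on each Segre fiber (Proposition~\ref{prop:BlowUpCoverS2}).\ The cone $\widetilde\cS_2$ carries two classical small analytic resolutions $\widetilde\cS_2^\pm\to \widetilde\cS_2$, realized as the total spaces of the rank-$4$ vector bundles $V^\vee\otimes \cO_{\P(V^\vee)}(-1)$ over each of the two $\P(V^\vee)$-factors; each fits into a factorization $\Bl_0\widetilde\cS_2\to\widetilde\cS_2^\pm\to\widetilde\cS_2$, where the first arrow contracts one of the two $\P^3$-rulings of the exceptional Segre variety onto the zero section of $\widetilde\cS_2^\pm$.\ The choice $\eps$ selects, via the local isomorphisms, one of $\widetilde\cS_2^\pm$ for each $i\in \{1,\dots,r\}$.

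\textbf{Global construction of $\cX_\cV^\eps$.}
Next I would globalize, using Theorem~\ref{theo:blowUpFamily}(c): after shrinking $\cV$, the scheme $\cZ^{\ge 4}_\cV$ decomposes into $r$ connected components (each containing one $z_i$), and $E_\cV$ is a locally trivial $\P^3\times \P^3$-bundle over $\cZ^{\ge 4}_\cV$.\ The choice $\eps$ therefore extends to a global $\P^3$-fibration $\pi_\eps\colon E_\cV\to B_\eps$, where $B_\eps$ is a smooth $\P^3$-bundle over $\cZ^{\ge 4}_\cV$.\ Since the normal bundle of $E_\cV$ in $\widetilde\cX_\cV$ restricts to $\cO_{\P^3}(-1)$ on each fiber of $\pi_\eps$ (from the $\cO(-1,-1)$ description), the Fujiki--Nakano contractibility criterion yields a complex manifold $\cX_\cV^\eps$ together with a holomorphic map $d_\eps\colon \widetilde\cX_\cV\to \cX_\cV^\eps$, which is biholomorphic off $E_\cV$ and identifies $E_\cV$ with $B_\eps$.\ The morphism $\rho_\cV$ collapses every fiber of $\pi_\eps$ (contained in a fiber of $E_\cV\to \cZ^{\ge 4}_\cV$) to a point of $f_\cV^{-1}(\cZ^{\ge 4}_\cV)$, hence factors through $d_\eps$, defining the desired $c_\eps\colon \cX_\cV^\eps\to \widetilde\cZ^{\ge 2}_\cV$.

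\textbf{Verification of (b) and main obstacle.}
Smoothness of $\cX_\cV^\eps$ is a local assertion and follows from the product description $M_i\times \widetilde\cS_2^\eps$ with $\widetilde\cS_2^\eps$ a vector bundle over $\P^3$.\ Away from the image of $E_\cV$, $c_\eps$ is an isomorphism, so the fiber of $\varphi_\cV$ over $[A]\in \cV\smallsetminus \Gamma$ is the smooth EPW cube $\widetilde\sZ^{\ge 2}_A$; over $[A]\in \cV\cap \Gamma$ it is the small resolution $\sX_A^\eps$ of $\widetilde\sZ^{\ge 2}_A$, with exceptional locus a disjoint union of $\P^3$'s (one per singular point).\ Smoothness of $\varphi_\cV$ then follows from miracle flatness applied to the map from the smooth total space $\cX_\cV^\eps$ to the smooth base $\cV$, whose fibers are equidimensional of dimension~$6$.\ The main obstacle I anticipate is the global coherence of the contraction: while each local small resolution is classical, the global $\P^3$-fibration $\pi_\eps$ must extend consistently from the $r$ points $z_i$ above $[A_0]$ to all of $\cZ^{\ge 4}_\cV$.\ This is precisely where Theorem~\ref{theo:blowUpFamily}(c) is essential, reducing the problem to an application of Fujiki--Nakano in the analytic setting.
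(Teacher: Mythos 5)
The proposal is correct and follows essentially the same route the paper intends: the paper explicitly states ``The situation is therefore exactly as in [O'G2, Section~3.2], which we follow without repeating the proofs,'' and O'Grady's argument there is precisely the one you reconstruct — local product description at the singular points, identification of the two small analytic resolutions of the cone over $\P^3\times\P^3$ as vector bundles over each $\P^3$-factor, globalization via the locally trivial $\P^3\times\P^3$-structure of $E_\cV\to\cZ^{\ge 4}_\cV$ from Theorem~\ref{theo:blowUpFamily}(c), and contraction by the Fujiki--Nakano criterion. One point you could make more explicit is the smoothness of the individual fibers $\sX_A^\eps$ for $[A]\in\cV\cap\Gamma$: miracle flatness only yields flatness of $\varphi_\cV$, and smoothness of the morphism additionally requires the fibers to be smooth, which follows by applying Fujiki--Nakano once more to the strict transform $\widetilde\sX_A$ (smooth by Theorem~\ref{theo:blowUpFamily}(d)) and the $\P^2$-fibration $E_i\cong I\to\P^3$ obtained by restricting the chosen projection to the hyperplane section $I\subset\P^3\times\P^3$.
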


Given another choice  $\eps'$ of $\P^3$-fibrations, there exists a commutative diagram
$$
\begin{tikzcd}
    & \widetilde \sX_{A_0}  \ar[ld,"d_\eps"'] \ar[rd,"d_{\eps'}"] & \\ 
    \sX_{A_0}^\eps \ar[rd, "c_\eps"']  \ar[rr, dashed] & &  \sX_{A_0}^{\eps'} \ar[ld,"c_{\eps'}"] \\ 
    &\widetilde \sZ^{\ge 2}_{A_0}, 
\end{tikzcd} $$
where the horizontal dashed arrow is a birational morphism which is the Mukai flop of the   exceptional $\P^3$ for which $\eps_i\ne \eps'_i$.\ Note that we do not claim that $\sX_{A_0}^\eps$ is projective.\ In Section~\ref{sec:ProjResolutionZ2A}, we will show that at least one~$\sX_{A_0}^\eps$ (for a suitable choice of $\eps$)  is indeed projective.

\section{Projective resolutions of $\widetilde\sZ^{\ge2}_A$}\label{sec:ProjResolutionZ2A}

Given a Lagrangian $[A]\in \Gamma\setminus \Sigma$, we described in Section~\ref{sect42} the blowup $\tau\colon \widetilde \sX_A\to \widetilde \sZ^{\ge 2}_A$ of  the singular locus of the EPW cube $\widetilde \sZ^{\ge 2}_A$ and, {\em in the analytic category,} some small resolutions $h_\eps\colon \sX_A^\eps\to \widetilde \sZ^{\ge 2}_A$ together with factorizations
$$
\tau\colon \widetilde\sX_A\xrightarrow{\ d_\eps\ } \sX_A^\eps \xrightarrow{\ c_\eps\ } \widetilde \sZ^{\ge 2}_A. 
$$

The aim of this section is to show that there exists $\eps$ such that the contraction $c_\eps$ is projective.\ We do so by constructing a smooth projective \hk\ variety $\sY_A$ with a small contraction $  \sY_A \to \widetilde \sZ^{\ge 2}_A$  and then showing that the blowup map $\tau$ factors though it.\ Our construction will use properties of the period map for EPW cubes which we now explain.

\subsection{Period maps for EPW cubes and double EPW sextics}

Smooth EPW cubes are \hk\ sixfolds of $\KKK^{[3]}$-type with a polarization of   Beauville--Bogomolov square 4 and divisibility 2.\ There is a
19-dimensional quasi-projective irreducible coarse moduli space ${}^{[3]}\cM_4^{(2)}$ for these polarized sixfolds (see \cite[Theorem~3.5]{deb}), hence a {\em moduli morphism}
$$\mu_3\colon\LG(\bw3V_6)\smallsetminus(\Gamma\cup \Sigma)\lra {}^{[3]}\cM_4^{(2)}
$$
that sends a  Lagrangian $[A]\notin \Gamma\cup \Sigma$ to the  point of the moduli space corresponding to the smooth EPW cube $\widetilde\sZ_A^{\ge2}$ with its canonical polarization.

There is also a 19-dimensional quasi-projective period domain ${}^{[3]}\cP_4^{(2)}$ and a {\em period map}
$$ \wp_3\colon {}^{[3]}\cM_4^{(2)}\lhra {}^{[3]}\cP_4^{(2)} $$
(see Section~\ref{secA1} for more details) which is injective by the Torelli theorem (\cite{huy, mar, ver}).

The situation for double EPW sextics is completely analogous and we describe it only briefly.\ Given a Lagrangian subspace $A\subset \bw3V_6$ not in $\Delta\cup \Sigma$ (see Section~\ref{sec2} for the definition of the divisors $\Delta$  and~$\Sigma$), O'Grady constructed in \cite{og} a smooth double cover of a certain sextic hypersurface in $\P(V_6)$.\ This double cover is a \hk\ fourfold with a canonical polarization of Beauville--Bogomolov square 2 and divisibility 1.\ So, as above, we have a moduli morphism 
$$\mu_2\colon\LG(\bw3V_6)\smallsetminus(\Delta\cup \Sigma)\lra {}^{[2]}\cM_2^{(1)}
$$
 and an injective period map
$$ \wp_2\colon {}^{[2]}\cM_2^{(1)}\lhra {}^{[2]}\cP_2^{(1)}. $$
O'Grady proved in \cite[Proposition~4.8]{og6} that the  composition $  \wp_2 \circ \mu_2$ extends to a regular morphism  
\begin{equation}\label{defp2}
\overline\wp_2\colon \LG(\bw3V_6)\smallsetminus  \Sigma'\lra {}^{[2]}\cP_2^{(1)}, 
\end{equation}
where $\Sigma'\varsubsetneq \Sigma$ is a proper closed subset.

There is a degree-2 finite map
\begin{equation*}\label{defrho}
    \rho\colon {}^{[2]}\cP_2^{(1)}\lra {}^{[3]}\cP_4^{(2)}
\end{equation*}
defined in Equation~\eqref{eqn:def_rho} and the main   theorem of \cite[Corollary~6.1]{kkm} says that 
 \begin{equation}\label{ekkm}
    \rho\circ \wp_2\circ\mu_2=\wp_3\circ \mu_3 
\end{equation}
on $\LG(\bw3V_6)\smallsetminus(\Delta\cup\Gamma\cup \Sigma)$.\ This implies that the  composition $   \wp_3 \circ \mu_3$ extends to a regular morphism  
\begin{equation}\label{defp3}
\overline\wp_3\coloneqq \rho\circ \overline\wp_2\colon \LG(\bw3V_6)\smallsetminus  \Sigma'\lra {}^{[3]}\cP_4^{(2)}. 
\end{equation}

\subsection{Projective resolutions of singular EPW cubes.}\label{sec:ProjRes}
We show here that one of the analytic contractions $c_\eps$ of Proposition~\ref{prop:contraction} is projective.

\begin{prop}\label{prop:ProjSmallRes}
    Let $[A]\in\Gamma\setminus \Sigma$.\ 
    There exists a smooth quasi-polarized projective \hk\ variety $(\sY,H)$ such that  the big and nef line bundle $H$ defines a small contraction 
    $c \colon \sY_A \to \widetilde\sZ^{\ge 2}_A$.
\end{prop}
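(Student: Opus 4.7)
My plan is to construct $(\sY_A, H)$ as the central fiber of a one-parameter smoothing of polarized EPW cubes, using the extension of the period map $\wp_3$ to $\Gamma \smallsetminus \Sigma$ combined with the surjectivity of the period map for \hk\ sixfolds of $\KKK^{[3]}$-type. First, I would choose an analytic disk $D \subset \LG(\bw3V_6) \smallsetminus \Sigma$ through $[A]$ with $D \cap \Gamma = \{[A]\}$, which is possible since $\Gamma$ and $\Sigma$ are distinct irreducible divisors (Proposition~\ref{prop:gamma}). Setting $D^* = D \smallsetminus \{[A]\}$, the restriction $\widetilde\cZ^{\ge 2}_{D^*} \to D^*$ is a smooth family of EPW cubes, each carrying its canonical polarization of Beauville--Bogomolov square $4$ and divisibility $2$. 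The associated classifying morphism $D^* \to {}^{[3]}\cM_4^{(2)}$ composed with $\overline{\wp}_3$ agrees with the restriction to $D^*$ of the globally defined period map $\wp_3$ from \eqref{defp3}, and so extends to all of $D$.

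The key step is to use this extension, together with surjectivity of the period map and Verbitsky's global Torelli theorem, to produce a smooth proper family $(\sY, H) \to D$ of polarized \hk\ sixfolds extending the family of smooth EPW cubes over $D^*$ (possibly after a finite base change), whose central fiber $(\sY_A, H_0)$ realizes the period $\wp_3([A])$. Here $H_0$ is nef as a limit of classes ample on the nearby fibers, while Fujiki's relation $H_0^6 = c \cdot q(H_0)^3$ combined with $q(H_0) = 4 > 0$ shows that $H_0$ is big; I set $H := H_0$. By the base-point-free theorem, a sufficiently large multiple $|mH|$ is base-point-free and induces a birational morphism $c \colon \sY_A \to \overline\sY_A$. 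To identify $\overline\sY_A$ with $\widetilde\sZ^{\ge 2}_A$, I would argue that the relative morphism induced by $|mH|$ on $\sY \to D$ agrees, over $D^*$, with the projective embedding of $\widetilde\cZ^{\ge 2}_{D^*}$ given by the canonical polarization of the smooth EPW cubes; the central fibers then match by flatness and properness.

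Smallness of $c$ follows from Wierzba's theorem: positive-dimensional fibers of a projective birational contraction from a \hk\ sixfold are isotropic, hence of dimension at most~$3$, and since they can only lie over the finite singular locus $g^{-1}(\sZ^4_A)$ (Theorem~\ref{theo:blowUpFamily}(a)), the exceptional locus of $c$ has codimension at least~$3$ in $\sY_A$. The main obstacle I expect is the central construction: extending the polarized family across $t = 0$ so as to produce a smooth \hk\ central fiber carrying a big and nef polarization. The extended period map merely provides a candidate Hodge structure at $[A]$; producing an actual smooth total space with smooth central fiber relies on surjectivity of the period map combined with Huybrechts' theorem that two \hk\ manifolds with the same period are birational, which allows one to birationally modify any a priori singular or non-smooth naïve limit and obtain the desired smooth model $\sY_A$.
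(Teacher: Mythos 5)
Your strategy is broadly the same as the paper's --- use the extended period map $\wp_3$, surjectivity of the (quasi-)polarized period map, and global Torelli to produce a smooth projective $\sY_A$ with a big and nef class of square $4$ and divisibility $2$, then use Kawamata base-point-freeness to get a contraction and match its image with $\widetilde\sZ^{\ge 2}_A$ --- but the execution diverges at two points, one of which is a genuine gap in your version.

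The gap is exactly where you flag it: producing a smooth \hk\ central fiber over $t=0$ carrying a big and nef $H_0$. Your idea is to take the family of smooth EPW cubes over $D^*$ and ``fill it in'' via a semistable-reduction type argument, then clean up the limit using Huybrechts' birationality result. This is circular as stated: nefness of $H_0$ ``as a limit of classes ample on nearby fibers'' requires a fixed smooth total space through $t=0$, which is precisely what you are trying to build; and Huybrechts' theorem does not by itself control how the quasi-polarization transforms under the birational modification of the central fiber. The paper short-circuits this entirely by packaging the needed existence statement as Lemma~\ref{lemm:dm} (proved as in \cite[Theorem~6.1]{dm}): for \emph{any} period point in ${}^{[3]}\cP_4^{(2)}$, there is a smooth projective \hk\ sixfold with a big and nef $H$ of the right square and divisibility. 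Once $(\sY_A,H)$ is produced this way, the paper runs the logic in the opposite direction from yours: it takes the \emph{universal deformation} of the pair $(\sY_A,H)$, lifts a curve $B$ through $\wp_3([A])$ into $\LG(\bw3V_6)$ as in \cite{og6}, and then compares the resulting family with $\widetilde\cZ^{\ge2}_B$ using Torelli over $B\setminus\{0\}$ and \emph{separatedness of the moduli space} to conclude the central fibers agree. Your appeal to ``flatness and properness'' to match central fibers is not enough on its own --- two flat proper families that agree away from a point need not have isomorphic special fibers; the separatedness of the relevant moduli functor (\cite[Theorem~2.1]{bouc}) is the missing ingredient, and is precisely what the paper invokes.

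Your smallness argument is different from the paper's and is valid: Wierzba's bound on fiber dimensions plus the fact that the exceptional locus lies over the finite set $g^{-1}(\sZ^4_A)$ gives codimension $\geq 3$. The paper instead observes that $\widetilde\sZ^{\ge 2}_A$ is not locally factorial at its singular points (being analytically the cone over the incidence variety $I\subset\P^3\times(\P^3)^\vee$), which rules out a divisorial contraction more directly. Either route works once the existence of the contraction is established.
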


\begin{proof}
We use the following lemma,   whose proof goes as   the proof of \cite[Theorem~6.1]{dm}.

 \begin{lemm}\label{lemm:dm}
     Given a point $x\in {}^{[3]}\cP_4^{(2)}$, there exists a smooth projective \hk\ sixfold~$Y$ with a big and nef line bundle $H$ of square $4$ and divisibility $2$ with period $x$.
 \end{lemm}

Let $(\sY_A,H)$ be the quasi-polarized \hk\ sixfold associated to the period point $x\coloneqq \overline\wp_3([A])$ given by the lemma.

Consider the local universal deformation $f\colon (\cY,\cH) \to (B,0)$ of the pair $(\sY_A,H)$.\ For~$b$ general in~$B$, the line bundle $\cH_b$ is ample on  $\cY_b$ and  the pair  $(\cY_b,\cH_b)$ defines an element of the moduli space ${}^{[3]}\cM_4^{(2)}$.\ Up to shrinking $B$ to a smooth curve containing $0$, we can suppose that for each $b\in B$, the point $\wp_3([\cY_b])$ is contained in the image of $\overline\wp_3 $ and is, for all $b\neq 0$, the period of a (smooth) EPW cube.

Since the line bundle $\cH$ is $f$-big and $f$-nef, the relative Kawamata--Viehweg  vanishing    theorem (\cite[Theorem 1-2-5 and Remark 1-2-6]{kkm}) implies $\R^{>0}f_*\cH=0$.\ Furthermore, by the relative Kawamata  base-point-free theorem (\cite[Theorem 3-1-1 and Remark 3-1-2]{kkm}), the $\cO_B$-algebra $\bigoplus_{m\ge 0}f_*\cH^m$ is finitely generated and there is a $B$-morphism $\varphi\colon  \cY \to \cZ'$ to its projective spectrum such that the  line bundle $ \cH$ is   the pullback of a relatively ample line bundle on   $\cZ'$.\ The morphism $\phi$   then induces   isomorphisms $\cY_b\isomto \cZ'_b$ for $b\in B\setminus\{0\}$, and   a projective contraction $  \sY_A \to \sZ'$ on   central fibres.\  

Consider the families
\begin{equation}\label{eqn:diag}
   \begin{tikzcd}
    \widetilde\cZ_{B}^{\ge 2} \ar[dr]& & \cZ'\ar[dl]\\ 
    & B,
\end{tikzcd} 
\end{equation}
 where the morphism $\widetilde\cZ_{B}^{\ge 2}\to B$, with central fiber $\widetilde\sZ^{\ge 2}_A$, is obtained by lifting the curve $B $ to $\LG(\bw3V_6)$ using the period map~$\wp_3$ as in \cite{og6}, and comes with a relatively ample line bundle (which is the pullback of the hyperplane section of $\sZ^{\ge 2}_A\subset \Gr(3, V_6)$ on each fiber).\ 

Over each point of $B\setminus\{0\}$, the fibers of the two morphisms in the diagram~\eqref{eqn:diag} are isomorphic polarized \hk\  manifolds with the same period map hence, by the Torelli theorem, there exists an isomorphism between the two families that respects the polarizations.\ By separatedness of the moduli space of polarized varieties with trivial canonical bundle (see for example \cite[Theorem~2.1]{bouc}), the central fibers $\widetilde\sZ^{\ge 2}_A$ and $\sZ'$ of these two morphisms are isomorphic.

Therefore, we have a projective birational contraction $c \colon \sY_A \to  \widetilde\sZ^{\ge 2}_A$, defined by the sections of a sufficiently large multiple $mH$ of $H$.\ Taking the top self-intersections of both sides of the equality   $c^*H_A=mH$, we obtain  $m=1$, so in fact, $H$ defines $c$.\

  In particular, $\widetilde \sZ^{\ge 2}_A$ is isomorphic outside a closed subset of codimension $\ge 2$ to an open subset of $\sY_A$, which has trivial canonical bundle.\ It follows that its smooth locus $(\widetilde \sZ^{\ge 2}_A)_{\mathrm{sm}}$ also has trivial canonical bundle.

It remains to show that $c$ is small.\ For that, we first   show that the image of the exceptional locus   of $c$ is the singular locus of $\widetilde \sZ^{\ge 2}_A$.\ Consider the restriction  
$$
c' \colon c^{-1}\bigl( (\widetilde \sZ^{\ge 2}_A)_{\mathrm{sm}}\bigr)\lra (\widetilde \sZ^{\ge 2}_A)_{\mathrm{sm}} 
$$
of $c$ to the preimage of the smooth locus of $\widetilde \sZ^{\ge 2}_A$.\ It is a birational morphism between smooth varieties, hence
$$
K_{c^{-1}((\widetilde \sZ^{\ge 2}_A)_{\mathrm{sm}})} \lin K_{(\widetilde \sZ^{\ge 2}_A)_{\mathrm{sm}}} + \mathrm{Exc}(c').
$$
Since the canonical divisors $K_{c^{-1}((\widetilde \sZ^{\ge 2}_A)_{\mathrm{sm}})}$ and $K_{(\widetilde \sZ^{\ge 2}_A)_{\mathrm{sm}}}$ are trivial, we obtain that $c'$ is an isomorphism.\ Therefore, $c$ is an isomorphism over $(\widetilde \sZ^{\ge 2}_A)_{\mathrm{sm}}$.\ This implies that the image of the exceptional locus of $c$ is the finite set of singular points of $\widetilde \sZ^{\ge 2}_A$, and therefore the contraction is small by \cite[Proposition~(1.4)]{nam}.
 \end{proof} 

 Given $[A]\in \Gamma\setminus\Sigma$, we now prove that the blowup $\tau\colon  \widetilde \sX_A\to \widetilde \sZ^{\ge 2}_A$ of $\widetilde \sZ^{\ge 2}_A$ along its finite singular locus $\sZ^4_A$ factors through the small contraction $c$ constructed above.

\begin{theo}\label{theo:projContraction}
    Let $[A]\in \Gamma\setminus \Sigma$ be a Lagrangian subspace of $\bw3V_6$.\ The resolution $\tau\colon  \widetilde \sX_A\to \widetilde \sZ^{\ge 2}_A$ obtained by blowing up the finite singular locus of the EPW cube $\widetilde\sZ^{\ge 2}_A$ factors through the small contraction $c$ constructed in Proposition~\ref{prop:ProjSmallRes}:
$$
\begin{tikzcd}
    & \widetilde \sX_A  \ar[ld, "d"'] \ar[dd, "\tau"] & \\ 
    \sY_A \ar[rd, "c"']  & &  \\ 
    &\widetilde \sZ^{\ge 2}_A.
\end{tikzcd} 
$$
The morphism $d\colon\widetilde \sX_A\to \sY_A$ contracts each component $E_i\subset \P^3\times \P^3$ of the exceptional locus of~$\tau$  to a copy of $\P^3$ and the small contraction $c\colon\sY_A\to  \widetilde \sZ^{\ge 2}_A$ contracts each such $\P^3$ to a singular point of $\widetilde \sZ^{\ge 2}_A$.\  

Moreover, there exists another projective variety $\sY'_A$ and a diagram
$$
\begin{tikzcd}
    & \widetilde \sX_A  \ar[ld, "d"'] \ar[rd, "d'"] & \\ 
    \sY_A \ar[rd, "c"']  \ar[rr, dashed] & & \sY'_A \ar[ld, "c'"] \\ 
    &\widetilde \sZ^{\ge 2}_A, 
\end{tikzcd} $$
where $\sY_A\dra \sY'_A$ is the flop of every $\P^3$ in the exceptional locus of $c$.
\end{theo}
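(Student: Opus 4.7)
The plan is to apply the classification results of \cite{kp23} on projective contractions of smooth hyper-Kähler varieties to the small contraction $c\colon \sY_A\to \widetilde\sZ^{\ge2}_A$ obtained in Proposition~\ref{prop:ProjSmallRes}. Since the analytic germ of each singular point $g^{-1}(z_i)\in \widetilde\sZ^{\ge2}_A$ is the cone over the incidence variety $I\subset \P^3\times (\P^3)^\vee$ (from Proposition~\ref{prop:LocalStructureZ2} combined with Section~\ref{sec:CanonicalDoubleCover}), and since $c$ is small with $r$ isolated singular points of this type in its image, the exceptional locus of $c$ should be identified as a disjoint union of $r$ smooth projective threefolds, each mapping to a point of $\sZ^4_A$. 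The results of \cite{kp23} should further force each such component to be a $\P^3$ with normal bundle $\Omega^1_{\P^3}$, since this is the unique local model for a projective small contraction with this type of singularity (it is precisely the Mukai-flop situation in dimension~$6$).

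Next, I would construct $d\colon \widetilde\sX_A\to \sY_A$ as the blowup of $\sY_A$ along these $r$ copies of $\P^3$. By the identification $\P(\Omega^1_{\P^3})\isom I$, the exceptional divisor of this blowup is a disjoint union of $r$ copies of $I$, canonically isomorphic to the exceptional divisor of $\tau$. To show that this blowup of $\sY_A$ is actually $\widetilde\sX_A$ and not merely analytically so, I would compare with the local analytic picture from Section~\ref{sect42}: both the blowup of $\sY_A$ along its $\P^3$'s and the blowup $\widetilde\sX_A$ of $\widetilde\sZ^{\ge2}_A$ along its singular locus are projective, smooth, with a morphism to $\widetilde\sZ^{\ge2}_A$ whose analytic local model near each $z_i$ is the blowup of the cone over $I$ at its vertex. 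The uniqueness of this blowup (as a smooth projective model with exceptional divisor a Cartier $I$ in the fibred product) gives a canonical isomorphism. The claim that $d$ contracts $E_i$ via the chosen projection $I\to \P^3$ is then immediate from the description $\P(\Omega^1_{\P^3})\to \P^3$.

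For the existence of $\sY'_A$, the plan is to perform the Mukai flop of $\sY_A$ along all $r$ copies of $\P^3$ simultaneously: blow up each $\P^3$ (recovering $\widetilde\sX_A$, whose exceptional divisors are the $I$'s), then contract each $I$ via the opposite projection $I\to (\P^3)^\vee$, using the other choice $\eps'$ of fibrations from Proposition~\ref{prop:contraction}. This yields a smooth (analytic) resolution $\sY'_A$, and the same argument via \cite{kp23} applied to the opposite family of fibrations (or directly, via Huybrechts' theorem that every birational model of a projective hyper-Kähler variety with a big and nef class remains projective) shows $\sY'_A$ is projective with its own small contraction $c'\colon \sY'_A\to \widetilde\sZ^{\ge2}_A$. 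The commutative diagram then follows from the construction.

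The main obstacle is extracting from \cite{kp23} exactly the statement needed—namely that a projective small contraction of a smooth hyper-Kähler sixfold onto a singularity analytically isomorphic to the cone over $I$ must contract a $\P^3$ with normal bundle $\Omega^1_{\P^3}$—and then gluing the local identification of blowups into a global isomorphism $\widetilde\sX_A\isom \mathrm{Bl}_{\sqcup \P^3}\sY_A$. Once this classification and identification are in place, the rest of the theorem is essentially a formal consequence of the standard Mukai flop picture.
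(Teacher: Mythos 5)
Your proposal takes a genuinely different route from the paper, and in a direction that leaves two substantive gaps that the paper is specifically designed to avoid.

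First, you want to begin by using \cite{kp23} to conclude that the exceptional locus of $c\colon \sY_A\to\widetilde\sZ^{\ge 2}_A$ is a disjoint union of copies of $\P^3$ with normal bundle $\Omega^1_{\P^3}$. But the input the paper actually extracts from \cite{kp23} (Corollary~A.2 and Equation~(1.1)) is of a different nature: it classifies small normal projective resolutions in terms of classes in $\Cl(\widetilde\sZ^{\ge 2}_A)/\Pic(\widetilde\sZ^{\ge 2}_A)$, and gives an injection $\Cl(\sZ_i)/\Pic(\sZ_i)\hookrightarrow \Pic(E_i)/\Z (E_i|_{E_i})\simeq\Z$; it does not by itself say anything geometric about the fibers of $c$. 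The paper never needs to identify the exceptional locus of $c$ as a $\P^3$ at the outset; instead it proves that the class $[L|_{E_i}]\in\Pic(E_i)$ determined by the Weil class behind $c$ is of the form $\cO_{E_i}(a_i,b_i)$ with $a_i\ne b_i$, which is the divisor-theoretic input needed for the next step. You would need a separate argument (e.g., a direct local study of small projective resolutions of the cone over $I$) to obtain the geometric identification you invoke.

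Second, even granting that $c$ contracts disjoint $\P^3$'s, your plan to recognize $\widetilde\sX_A$ as the blowup of $\sY_A$ along these $\P^3$'s by a ``uniqueness of the blowup'' argument has a real gap. The composition $\mathrm{Bl}_{\sqcup\P^3}\sY_A\to\widetilde\sZ^{\ge 2}_A$ is a projective birational morphism, hence the blowup of some ideal sheaf supported on $\sZ^4_A$, but there is no a priori reason this ideal is the one whose blowup produces $\widetilde\sX_A$. Matching analytic germs near each $z_i$ and then concluding a global algebraic isomorphism requires a comparison of ideal sheaves in the analytic and algebraic categories (a GAGA-type step) that you only gesture at. The paper sidesteps both issues entirely by building the morphism $d\colon\widetilde\sX_A\to\sY_A$ directly: starting from the divisor $L$ on $\widetilde\sX_A$ determined by the Weil class of $c$, it forms $L_1=L+\sum p_iE_i$ restricting to a positive multiple of $\cO(1,0)$ on each $E_i$, shows via Lemma~\ref{lemma:GlobGen} that $L_1$ plus an ample pullback is globally generated, and applies Ishii's contraction criterion (\cite[Theorem~1, Corollary~2]{ish}) to produce a projective contraction $d_+\colon\widetilde\sX_A\to\sY_+$ factoring through $\tau$; it then uses the two-element classification of small projective resolutions of each $\sZ_i$ from \cite[Appendix~A]{kp23} to identify $\sY_+$ or $\sY_-$ with $\sY_A$ locally and hence globally. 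This route is both more constructive and avoids the two gaps above. Your Mukai-flop picture for $\sY'_A$ is consistent with the paper's use of \cite[Corollary~A.4]{kp23}, but without the prior identification it remains a sketch.
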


We will need the following lemma. 

\begin{lemm}\label{lemma:GlobGen}
   Let $D$ be a divisor on $\widetilde \sX_A$ 
   such that the line bundle $\cO_{E_i}(D\vert_{E_i})$ has nonzero sections for each component $E_i$ of $E$.\ 
    For any   sufficiently ample divisor~$H$ on $\widetilde \sZ^{\ge 2}_A$, the line bundle $\cO_{\widetilde \sX_A}(D+\tau^*H)$ is generated by its global sections. 
\end{lemm}

\begin{proof}
    Consider the divisor $\tau_*D$ on $\widetilde \sZ^{\ge 2}_A$.\ For $H$ sufficiently ample, $\tau_*D+H$ is globally generated on $\widetilde \sZ^{\ge 2}_A$.\ Write $\tau^*(\tau_*D)=D+D'$, where the support of $D'$ is   contained in $E$.\ For $H$ sufficiently ample, we may also suppose that the linear system $|\tau^*H-D'|$ has no base points outside of~$E$.
    
    The first condition on $H$ implies that, for any $x\notin E$, there exists an effective divisor $F$ on $\widetilde \sZ^{\ge 2}_A$ linearly equivalent to $\tau_*D+H$, whose support does not contain $\tau(x)$.\ We then have  $\tau^*F=D+D'+\tau^*H$, hence
    $D +2\tau^*H\lin \tau^*F+\tau^*H -D'$.\ The second condition on $H$ implies that $x$ is not in the base locus of $D +2\tau^*H$.

    It remains to show that $D +2\tau^*H$ has no base-points on $E$.\ The divisor $(D+2\tau^*H )\vert_{E_i}=D\vert_{E_i}$ is effective by hypothesis hence is globally generated on $E_i$ (like every effective divisor on~$E_i$).\ Therefore,  
    it is  enough to show that  we can lift the sections of $(D+2\tau^*H)\vert_E$ to $\widetilde \sX_A$.\ For that,   the vanishing 
     $$H^1(\widetilde \sX_A, D+2\tau^*H -E) = 0 $$
is enough.\ Using the Leray spectral sequence, it is enough to show that
$$H^1\bigl(\widetilde \sZ^{\ge 2}_A, (\tau_*\cO_{\widetilde \sX_A}(D-E))(2\tau^*H )\bigr)=H^0\bigl(\widetilde \sZ^{\ge 2}_A, (R^1\tau_*\cO_{\widetilde \sX_A}(D-E))(2\tau^*H )\bigr)=0.$$
The vanishing of the $H^1$   holds for $H$ sufficiently ample (\cite[Proposition~5.3]{hart}).\ For the vanishing of the $H^0$, we need to show $ R^1\tau_*\cO_{\widetilde \sX_A}(D-E)=0$.\ This follows from
  the theorem on formal functions (\cite[Theorem~11.1]{hart}), noticing that $H^1(E,D  -mE) = 0$ for all $m\ge 0$ (because $E\vert_{E_i}=\cO(-1,-1)$ and, again, $D\vert_{E_i}$ is effective).\end{proof}

\begin{proof}[Proof of the theorem]
By \cite[Corollary~A.2]{kp23},   small normal partial resolutions of $\widetilde\sZ^{\ge 2}_A$ correspond to nontrivial classes in $\Cl(\widetilde\sZ^{\ge 2}_A)/\Pic(\widetilde\sZ^{\ge 2}_A)$ modulo multiplication by positive integers.\ In particular, the existence of $c$ implies that there exists a Weil divisor class $[D]$ on $\widetilde\sZ^{\ge 2}_A$ which is not Cartier at any singular point    (and such that $\sY_A$ is the blowup of $[D]$).\

As before, we denote by $z_1, \dots, z_r$ the points of~$\sZ^4_A$ and by $E_i$ the component of the exceptional divisor $E$ of $\tau$ over the point $z_i$.

\noindent{\it Construction of a divisor~$L$ on~$\widetilde \sX_A$ whose restriction   to each $E_i$ is of the form $\cO_{E_i}(a_i,b_i)$ with $a_i\neq b_i$.} The blowup $\tau$ induces an isomorphism $\Cl(\widetilde \sZ^{\ge 2}_A)\simeq \Pic(\widetilde \sX_A)/\bigoplus_i \Z E_i$.\ Let~$L$ be a divisor on~$\widetilde \sX_A$ whose class corresponds to the Weil divisor class $[D]\in \Cl(\widetilde \sZ^{\ge 2}_A)$ and whose support does not contain any $E_i$.\

Choose a neighborhood $\sZ_i\subset \widetilde \sZ^{\ge 2}_A$  of $g^{-1}(z_i)$ that     contains no other singular points  and set $\sX_i\coloneqq \widetilde\sX_A\times_{\widetilde \sZ^{\ge 2}_A} \sZ_i$ and $\sY_i \coloneqq \sY_A\times_{\widetilde \sZ^{\ge 2}_A} \sZ_i$.\ Then,  $\sX_i\to \sZ_i$ is the blowup of the unique singular point $g^{-1}(z_i)$ of $\sZ_i$  and $c_i\colon \sY_i\to \sZ_i$ is a small projective resolution. 

Again by \cite[Corollary~A.2]{kp23}, the contraction $c_i$ corresponds to the nontrivial class of the Weil divisor $D_i\coloneqq D\cap \sZ_i$ in $\Cl(\sZ_i)/\Pic(\sZ_i)$.\ As in \cite[Equation~(1.1)]{kp23}, there is an injection 
$$
j_i\colon \Cl(\sZ_i)/\Pic(\sZ_i)\lhra \Pic(E_i)/\Z (E_i\vert_{E_i}) \simeq \Z,
$$
where the last isomorphism follows from the Lefschetz hyperplane theorem.\ 

Therefore, $\Cl(\sZ_i)/\Pic(\sZ_i)\simeq \Z$ has rank $1$ and is generated by the class of $D_i$.\ Since $j_i$ is injective, the divisor class $j_i([D_i])=[L\vert_{E_i}]$ is not a multiple of $[E_i|_{E_i}] \simeq \cO(-1,-1)$, hence is of the form $\cO_{E_i}(a_i, b_i)$ with $a_i\neq b_i$.

\noindent{\it Construction of smooth projective varieties $\sY_+$ and $\sY_-$ and divisorial contractions $d_\pm\colon \widetilde \sX_A\to \sY_\pm$.} 
 Let $L$ be the divisor constructed above.\ There exist integers $p_i$ and $q_i$ such that $L+p_iE_i$ and $-L+ q_iE_i$ restrict  on $E_i$ to $\cO_{E_i}(c_i, 0)$ and $\cO_{E_i}(0,c_i)$, with $c_i>0$. 

 We set $L_1\coloneqq L+p_1E_1+\cdots+p_rE_r$ and $L_2\coloneqq -L+q_1E_1+\cdots+q_rE_r$.\  These divisors restrict to   positive multiples of $\cO(1, 0)$ and $\cO(0, 1)$ on each $E_i$.\  
By Lemma~\ref{lemma:GlobGen}, after adding to $L_1$ and $L_2$   the pullback of a sufficiently ample divisor on $\widetilde \sZ^{\ge 2}_A$, we can suppose that they are globally generated.\ Therefore, by \cite[Theorem 1 and Corollary 2]{ish}, they define   morphisms $d_\pm\colon {\widetilde \sX_A}\to \sY_\pm$, with $\sY_\pm$ smooth projective, that map each $E_i\subset \P^3\times\P^3$ to either one of the two $\P^3$ and fit into factorizations
$$
\tau\colon \widetilde \sX_A\xrightarrow{\ d_\pm\ } \sY_\pm \xrightarrow{\ c_\pm\ } \widetilde \sZ^{\ge 2}_A,
$$
where $c_+$ and $c_-$ are small projective contractions that are not isomorphic over each $\sZ_i$.

\noindent{\it End of the proof.} Because of the isomorphism $\Cl(\sZ_i)/\Pic(\sZ_i)\simeq \Z$,  the results of \cite[Appendix~A]{kp23} imply that $\sZ_i$ has exactly two projective small resolutions, namely $\sY_i^{+}=\Bl_{[D_i]}(\sZ_i)$ and $\sY_i^{-}=\Bl_{[-D_i]}(\sZ_i)$, and $\sY_i$ is one of those.\ 
Over each $\sZ_i$, the contractions $c_+$ and $c_-$ are two distinct normal small projective resolutions of $\sZ_i$, hence they must coincide with $\sY_i^{+}$ and $\sY_i^{-}$.\ Since $\sY_i$ is one of those, we have a morphism $\sX_i\to \sY_i$.\ In this way, we extend the rational map $d\coloneqq  c^{-1}\circ \tau \colon  \widetilde \sX_A\dra \sY_A$   to each $\sX_i$, hence to $\widetilde \sX_A$,  as a regular morphism.

Finally, $\sY_A$ is the blowup of, say, $[D]$, and \cite[Corollary A.4.]{kp23} shows that   the blowup~$\sY'_A$ of $[-D]$ is the flop of $\sY_A$, and the same argument shows that there exists a morphism $d'\colon \widetilde \sX_A \to \sY'_A$.
\end{proof}

 \section{No associated K3 surface}

As mentioned earlier, O'Grady obtained in \cite{og}  results similar to our Theorem~\ref{theo:projContraction}, but for double EPW sextics: when $[A]\in \Delta\smallsetminus \Sigma$, the   double EPW sextic associated to~$A$ is singular with   small projective resolutions.\ In addition, he constructs a smooth K3 surface $S_A \subset \P^6$ and proves that   the    double EPW sextic is birationally isomorphic to the Hilbert square $S_A^{[2]}$.

In our case, we prove below that, on the contrary,   when $[A]\in \Gamma\smallsetminus \Sigma$ is very general,  the singular  EPW cube $\widetilde \sZ^{\ge2}_A$ is {\em not} birationally isomorphic to any smooth moduli space of  stable sheaves on a K3 surface.\ For this, we use the determination of the image by the period map of the divisor $\Gamma$ obtained in Proposition~\ref{prop:imagediv} and results from \cite{PirOrtiz}.


\begin{prop}\label{prop:NoAssociatedK3}
When $[A]\in \Gamma\smallsetminus\Sigma$ is very general, the singular EPW cube  $\widetilde\sZ^{\ge2}_A$
 is not  birationally isomorphic to any smooth moduli space of stable sheaves on a K3 surface.
\end{prop}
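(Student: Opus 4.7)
My plan is to use Addington's numerical criterion, in the form extended to $K3^{[n]}$-type by Markman and Bayer--Macrì: a hyper-Kähler manifold $X$ of $K3^{[n]}$-type is birational to a smooth moduli space of stable sheaves on some K3 surface if and only if the algebraic part $\widetilde\Lambda(X)_{\mathrm{alg}}$ of its Mukai lattice contains a primitive isotropic vector of divisibility one (equivalently, contains a hyperbolic plane $U$). By Proposition~\ref{prop:ProjSmallRes} and Theorem~\ref{theo:projContraction}, the singular EPW cube $\widetilde\sZ^{\ge 2}_A$ is birational to the smooth projective hyper-Kähler sixfold $\sY_A$ of $K3^{[3]}$-type, and since birational hyper-Kähler manifolds have isomorphic Hodge structures on $H^2$ (Huybrechts), the problem reduces to verifying that the criterion fails for $\sY_A$ when $[A]$ is very general in $\Gamma\smallsetminus\Sigma$.

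First I would pin down $\Pic(\sY_A)$ explicitly. By Proposition~\ref{prop:imagediv}, the period $\wp_3([A])$ lies in the Heegner divisor ${}^{[3]}\cD_{4,12}^{(2)}$; ``very general'' means in no smaller Heegner locus, which forces $\Pic(\sY_A)$ to have rank exactly $2$. It is generated by the EPW-cube polarization $H$ (of square $4$ and divisibility $2$) together with an extra class $\beta$ coming from the Heegner condition. Case~(c) of the proof of Proposition~\ref{prop31} pins down the arithmetic of $\beta$: one has $\beta^2=-12$, $\div(\beta)=2$, and class $\beta_{*}=(1,1)$ in $A_{h^\perp}\simeq(\Z/2\Z)^2$. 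Since $\beta\in H^\perp$, the intersection form on $\Pic(\sY_A)$ is $\mathrm{diag}(4,-12)$, and Nikulin's gluing formulae then determine the discriminant form on $\Pic(\sY_A)$ and on its orthogonal complement $T(\sY_A)$ inside $\Lambda_{K3^{[3]}}$.

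From $\Pic(\sY_A)$ one builds the rank-$3$ algebraic Mukai lattice $\widetilde\Lambda(\sY_A)_{\mathrm{alg}}$ by adjoining, in the canonical way, the algebraic Mukai vector $v_0$ of square $2(n-1)=4$; this rank-$3$ lattice is determined up to isometry by the data above. The final step is the finite arithmetic check: enumerate primitive vectors of square $0$ in $\widetilde\Lambda(\sY_A)_{\mathrm{alg}}$ and verify that none has divisibility one. A convenient route is via discriminant forms: a primitive isotropic vector of divisibility~$1$ would split off a hyperbolic plane $U$, leaving a rank-$1$ orthogonal complement whose discriminant form would have to match that of $\widetilde\Lambda(\sY_A)_{\mathrm{alg}}$ (up to sign); one shows this is incompatible with the $2$-adic discriminant form computed above.

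The main obstacle is this last arithmetic verification: it is a finite but delicate check on a small discriminant group, requiring careful bookkeeping of the extension $\Pic(\sY_A)\hookrightarrow\widetilde\Lambda(\sY_A)_{\mathrm{alg}}$ and of the Mukai-vector contribution. Because this check depends only on the Heegner divisor ${}^{[3]}\cD_{4,12}^{(2)}$ and not on the particular very general point, it yields the conclusion for all very general $[A]\in\Gamma\smallsetminus\Sigma$ simultaneously.
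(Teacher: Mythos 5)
Your strategy is genuinely different from the paper's. You reach for the Addington/Markman/Bayer--Macr\`i criterion (existence of a primitive isotropic class of divisibility one, equivalently a copy of $U$, in the rank-$3$ algebraic Mukai lattice $\widetilde\Lambda(\sY_A)_{\mathrm{alg}}$), whereas the paper invokes \cite[Theorem~3.7]{PirOrtiz}, which characterizes birationality to $M_v(S,H)$ directly in terms of Hodge isometry of transcendental lattices, then computes $T(\sY_A)=\beta^\perp\subset h^\perp$ explicitly in the coordinates of~\eqref{eqn:latticeForK3[3]sq4div2} and verifies there is no isotropic vector of divisibility~$1$ orthogonal to $E_8(-1)^2$ --- hence $T(\sY_A)\not\cong N\oplus U\oplus\Z(-2e)$. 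Both criteria ultimately test the same arithmetic obstruction, and your route would work; but the paper's is notably lighter on bookkeeping, because $\beta^\perp$ is computed directly inside $h^\perp$, bypassing both the primitive hull of the Picard lattice in $\Lambda_{\KKK^{[3]}}$ and the gluing into the full Mukai lattice.

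However, there is a concrete error in your setup that you would need to repair. You assert that, since $\beta\in H^\perp$, the intersection form on $\Pic(\sY_A)$ is $\mathrm{diag}(4,-12)$. That lattice has discriminant $48$, whereas the saturated rank-$2$ Picard lattice corresponding to the Heegner divisor ${}^{[3]}\cD_{4,12}^{(2)}$ has discriminant $12$: as the paper states, $K$ is a discriminant-$12$ overlattice of $\Z h\oplus\Z\beta$ of index $2$, with Gram matrix $\left(\begin{smallmatrix}4&2\\2&-2\end{smallmatrix}\right)$ in a suitable basis (\emph{e.g.}\ $h$ and $(h+\beta)/2$). Since your whole verification proceeds by matching $2$-adic discriminant forms, starting from the wrong form on $\Pic(\sY_A)$ would derail the computation. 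You flag the extension $\Pic(\sY_A)\hookrightarrow\widetilde\Lambda(\sY_A)_{\mathrm{alg}}$ as delicate, but the delicacy already arises one step earlier, in passing from $\Z h+\Z\beta$ to its saturation in $\Lambda_{\KKK^{[3]}}$ --- exactly the subtlety the paper sidesteps by working on the transcendental side.
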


\begin{proof}
   By \cite[Theorem~3.7]{PirOrtiz}, a smooth \hk\ variety $X$ of $\KKK^{[3]}$-type is birationally isomorphic to a smooth moduli space $M_v(S, H)$ for some K3 surface $S$ if and only if $X$ and~$S$ have Hodge isometric transcendental lattices. We will determine these lattices when $X$  is a smooth model of a very general   singular EPW cube  $\widetilde\sZ^{\ge2}_A$ as in the proposition and when $S$ is a (necessarily very general) polarized K3 surface, and prove that they cannot be isomorphic.

  As shown in Proposition~\ref{prop:ProjSmallRes}, when $[A]\in \Gamma\smallsetminus\Sigma$ is very general, the singular EPW cube~$\widetilde\sZ^{\ge2}_A$
 is birationally isomorphic to a smooth quasi-polarized projective \hk\ variety $(\sY_A,H)$ of K3$^{[3]}$-type.

 Let $\Lambda_{\KKK^{[3]}} = \Lambda_{\KKK}\oplus \Z(-4)$ be the lattice associated with \hk\  manifolds of K3$^{[3]}$-type  and let $h$ be a vector of square 4 and divisibility $2$ in $\Lambda_{\KKK^{[3]}}$.\  By Proposition~\ref{prop:imagediv}, the period point of $\sY_A$ is a very general point of the Heegner divisor ${^{[3]}\cD_{4,12}^{(2)}}$, hence the  transcendental lattice of $\sY_A$ is isometric to $K^\perp\subset \Lambda$, where $K\subset \Lambda$ is a discriminant $12$ lattice that contains~$h$.\

    As explained in the proof of Proposition~\ref{prop31}, the lattice $h^\perp$ is of the form $M \oplus \Z k \oplus \Z\ell$, where $M = N\oplus U$ is unimodular and $k$ and $\ell$ have square $-2$, and  the lattice $K^\perp$ is equal to $\beta^\perp\subset h^\perp$ for some vector $\beta\in h^\perp$ of square $12$, divisibility $2$ and class $\beta_*=(1,1)\in A_{h^\perp}$ (up to isometries of $O(h^\perp)$, this vector is unique).\  Therefore, we can take $\beta = 2m + k + l$ with $m = u - v$, where $(u,v)$ is a basis of $U\subset M$.\ The transcendental lattice of $X$ is equal to 
    $$
    T = N \oplus \begin{pmatrix}
            2 & 0 & 1\\
            0 & -4 & -2\\
            1 & -2 & -2
            \end{pmatrix},
    $$
    where a basis of the nonunimodular part is given by $(u+v, k-\ell, v + k)$.
    
    The transcendental lattice of a very general polarized K3 surface of degree $2e$  is isomorphic to $N \oplus U \oplus \Z(-2e)$.\ We show that the lattice $T$ defined above is not of this form  by proving that there exist  no vectors $w\in T$ of divisibility $1$ and square $0$ orthogonal to $N$.\ Such a vector $w$ would be of the form $x(u+v)+y(k-\ell) + z(v+k)$.\ We compute its divisibility and square in terms of $x,y,z$:
    $$
        \div(w) = (2x + z, -4y-2z, x-2y-2z)
    $$
    and 
    $$
        w^2 = 2x^2 -4y^2 -2z^2 + 2xz - 4yz = 2(x^2 -z^2 +xz -2(y^2+yz)).
    $$
    We now impose $w^2 = 0$ and $\div(w) = 1$ and $w^2 = 0$.\ The first condition    implies
    $$
    x^2 -z^2 +xz -2(y^2+yz) = 0.
    $$
    In particular, $x^2 +z^2 + xz  $ is even, hence $x$ and $z$ are both even, which is absurd since $w$ has divisibility 1.
\end{proof}

\begin{rema}
   More generally,  \cite[Corollary~4.4]{bmw} shows  that for $[A]\in \Gamma\smallsetminus\Sigma$ very general, the singular EPW cube  $\widetilde\sZ^{\ge2}_A$
 is not  birationally isomorphic to any moduli space of \emph{twisted} sheaves on a K3 surface. 
\end{rema}

\appendix
\section{Divisors in the period domain ${}^{[3]}\cP_{4}^{(2)}$}\label{appendix}

In this appendix, we define Heegner divisors in period domains and prove some of their properties in cases that concern us. We then   analyze the images in the period domains of the  divisors $\Delta$, $\Gamma$, and $\Sigma$ defined in Section~\ref{sec2} and prove that they are   Heegner divisors. 

 \subsection{Heegner divisors in the period domains ${}^{[2]}\cP_{2}^{(1)}$  and ${}^{[3]}\cP_{4}^{(2)}$} \label{secA1}

We start by introducing some notation, following \cite[Sections~3.9 and 3.10]{deb}. 

Let $\Lambda_{K3^{[m]}}$ be the lattice associated to \hk\ manifolds of $\KKK^{[m]}$-type, and let $\tau$ be a polarization type, namely the $O(\Lambda_{K3^{[m]}})$-orbit of a vector $h\in \Lambda_{K3^{[m]}}$. The period domain of polarized \hk\ manifolds of $\KKK^{[m]}$-type and polarization type $\tau$ is the quotient
$${^{[m]}\cP_\tau} \coloneqq \cD_{h^\perp}/\widehat O(\Lambda_{K3^{[m]}}, h),$$
where $\cD_{h^\perp}\subset \P(h^\perp\otimes \C)$ is the open set of $x\in \P(h^\perp\otimes \C)$ such that $x^2=0$ and $x\cdot \bar x >0$, and $\widehat O(\Lambda_{K3^{[m]}}, h)$ is the group of isometries of $\Lambda_{K3^{[m]}}$ that fix the vector $h$ and act as $\pm \id$ on the discriminant group of $\Lambda_{K3^{[m]}}$. 

When the divisibility  $\div(h)$  of  $h$ is equal to $1$ or $2$, the polarization type $\tau$ of $h$ is uniquely determined by  $h^2$ and $\div(h)$ (see \cite[Remark~3.23]{deb}). Therefore, we  denote by ${}^{[2]}\cP_{2}^{(1)}$ the period space of polarized \hk\ manifolds of $\KKK^{[2]}$-type with polarization of square 2 and divisibility 1, and  by ${}^{[3]}\cP_{4}^{(2)}$ the period space of polarized \hk\ manifolds of $\KKK^{[3]}$-type with polarization of square 4 and divisibility 2.

There exists a finite degree 2 map 
\begin{equation}\label{eqn:def_rho}
    \rho\colon {}^{[2]}\cP_{2}^{(1)}\to {}^{[3]}\cP_{4}^{(2)}.
\end{equation} Indeed,   the period spaces ${}^{[2]}\cP_{2}^{(1)}$ and ${}^{[3]}\cP_{4}^{(2)}$ are obtained as quotients of the same domain $\cD$ by two groups of isometries $\Gamma_2$ and $\Gamma_3$, such that $\Gamma_2$ has index $2$ in $\Gamma_3$ (\cite[Proposition~6.3]{riz}).

\begin{defi}[Heegner divisors] Let $K\subset \Lambda_{K3^{[m]}}$ be a rank-2 lattice of real signature $(1,1)$ that contains $h$.
    The \emph{Heegner divisor} ${^{[m]}\cD_{\tau, K}}$ of ${^{[m]}\cP_\tau}$ is the image   of the hyperplane section $\P(K^\perp\otimes \C)\cap \cD_{h^\perp}$ in the quotient $\cD_{h^\perp}/\widehat O(\Lambda_{K3^{[m]}}, h)$. Moreover, for $d>0$, we set 
    $$
        {^{[m]}\cD_{\tau, d}} \coloneqq \bigcup_{K, \ \disc(K^\perp) = - d} {^{[m]}\cD_{\tau, K}},
    $$
    where $K$ runs over the rank-2 sublattices of $\Lambda_{K3^{[m]}}$ with real signature $(1,1)$ that contain $h$.
    Note that the Heegner divisor ${^{[m]}\cD_{\tau, K}}$   only depends on $\beta^\perp$, where $\beta$ is any nonzero vector in $K \cap h^\perp$. 
\end{defi}

 Following the notation of \cite{dm}, we  denote by $^{[2]}\cD_{2,d}^{(1)}$ the Heegner divisors in the period space ${}^{[2]}\cP_{2}^{(1)}$, and  by $^{[3]}\cD_{4,d}^{(2)}$ the Heegner divisors in the period space ${}^{[3]}\cP_{4}^{(2)}$.
 We compare  these Heegner divisors using the double cover $ \rho\colon {}^{[2]}\cP_2^{(1)}\to {}^{[3]}\cP_4^{(2)}$ defined in~\eqref{eqn:def_rho}.  

Heegner divisors in  ${}^{[2]}\cP_2^{(1)}$ are described in \cite[Proposition~3.29]{deb}: the divisor $^{[2]}\cD_{2,d}^{(1)}$ is nonempty if and only $d=2e$ and $e\not\equiv 3\pmod{4}$. Moreover, if $e$ is even, the divisor $^{[2]}\cD_{2,2e}^{(1)}$ is irreducible, while it has 2 components for $e\equiv 1\pmod{4}$.\ We give a similar description of Heegner divisors in  ${}^{[3]}\cP_4^{(2)}$.

\begin{prop}\label{prop31}
Let $d$ be a positive integer.\ The  Heegner divisor  $^{[3]}\cD_{4, d}^{(2)}\subset {}^{[3]}\cP_4^{(2)}$ is nonempty if and only if $d$ is an even number $2e$ such that $e\not\equiv 3\pmod{4}$.\ When this is the case, $^{[3]}\cD_{4, 2e}^{(2)}$ is irreducible and   is the image  by $\rho$ of the Heegner divisor~$^{[2]}\cD_{2, 2e}^{(1)}\subset {}^{[2]}\cP_2^{(1)}$.
 \end{prop}
 
\begin{proof}  
Recall that ${}^{[3]}\cP_4^{(2)} = \cD_{h^\perp}/\widehat O(\Lambda_{\KKK^{[3]}},h)$, where $h\in \Lambda_{\KKK^{[3]}}$ has square $4$ and divisibility $2$.\ As remarked in \cite[Section~6.1]{riz}, the lattice $h^\perp$ decomposes as 
\begin{equation}\label{eqn:latticeForK3[3]sq4div2}
    h^\perp = M \oplus \Z k \oplus \Z\ell,
\end{equation} 
 where $k$ and $\ell$ are vectors of square $-2$ and $M$ is a unimodular lattice. The discriminant group~$A_{h^\perp}$ is isomorphic to $\Z/2\Z\times \Z/2\Z$.
Moreover the group $\widehat O(\Lambda_{\KKK^{[3]}},h)$ is equal to the whole group of isometries $O(h^\perp)$. Finally, the group $O(h^\perp)$ is generated by its subgroup $\widetilde O(h^\perp)$ of isometries that acts trivially on the discriminant group $A_{h^\perp}$, and  the reflection $r$   with respect to $k+\ell$, which exchanges $k$ and $\ell$ and is the identity on $M$.

Let $K$ be a rank-2 sublattice of $\Lambda_{\KKK^{[3]}}$ of signature (1,1) that contains $h$, and denote by~$\beta$ a primitive generator of $K\cap h^\perp$. We determine when the Heegner divisor defined by $K$ is contained in $^{[3]}\cD_{4, d}^{(2)}$. Since $\beta$ is in $h^\perp$, we can write, with respect to the decomposition \eqref{eqn:latticeForK3[3]sq4div2},
$$
\beta =am+bk + c\ell,
$$
where  $m\in M$ is a primitive vector and $(a,b,c)=1$. Let $2t\coloneqq m^2$.  We   compute $\beta^2 = 2ta^2-2b^2-2c^2$ and $\div(\beta) = (a,2b,2c) = (a,2)$. In particular,~$\beta$ has divisibility $2$ if and only if $2\mid a$, and in this case, either $b$ or $c$ must be odd.

Using \cite[Lemma~7.2]{GHS13}, we   compute the discriminant 
\begin{equation}\label{eqn:disc}
 \disc(\beta^\perp) = \frac{-\beta^2\disc(h^\perp)}{\div(\beta)^2}= \frac4{\div(\beta)^2} \cdot 2(-ta^2 + b^2 + c^2),   
\end{equation}
of the lattice   $\beta^\perp\subset h^\perp$,
where we used $\disc(h^\perp)=4$. Since $\div(\beta)\mid 2$, it follows that $\disc(\beta^\perp)$ is equal to $-\beta^2$ when $2\mid a$, and to $-4\beta^2$ when $2\nmid a$. In particular, since $\beta^2$ is even, $\disc(\beta^\perp)$ is an even number $2e$.

Moreover, $e$ cannot be equal to $3$ modulo $4$. Indeed, if   $\disc(\beta^\perp)\equiv 6\pmod{8}$,   then   $a$ must be even and $\disc(\beta^\perp) = -\beta^2 = -2ta^2 +b^2+c^2$, which is never equal to $6$ modulo 8.

Conversely, we now show that for any $e\not\equiv 3\pmod{4}$, there exists a unique $O(h^\perp)$-orbit in $h^\perp$ of vectors $\beta$ such that $\beta^\perp$ has discriminant $2e$. Using \cite[Lemma~3.3]{riz}, this implies that $^{[3]}\cD_{4, 2e}^{(2)}$ is irreducible.

We start by studying the orbit of a primitive vector $\beta\in h^\perp$ of square $\beta^2$ and class $\beta_* = [\beta/\div(\beta)] = (x,y)$ in $A_{h^\perp}=\Z/2\Z\times \Z/2\Z$.\ It is equal to
$$
O(h^\perp)\beta = \widetilde O(h^\perp)\beta \cup \widetilde O(h^\perp)r(\beta).
$$
By Eichler's Lemma, the orbit $\widetilde O(h^\perp)\beta$ is the set of vectors $v$ in $h^\perp$ of square $\beta^2$ and class $v_* \in A_{h^\perp}$ equal to $\beta_*$. Hence, since $r$ acts on $A_{h^\perp}$ by exchanging the components, the orbit $O(h^\perp)\beta$
is the set of primitive vectors $v\in h^\perp$ with square $\beta^2$ and class $v_* \in A_{h^\perp}$  equal to $(x,y)$ or $(y,x)$.\ Therefore, there are at most three $O(h^\perp)$-orbits of primitive vectors of given square, respectively of class $(0,0)$, $(0,1)$ or $(1,0)$, and $(1,1)$, in the discriminant group.

We compute the discriminant of $\beta^\perp$ in each case, using Equation~\eqref{eqn:disc}:
\begin{enumerate}[label = (\alph*)]
\item $\div(\beta)=1$ hence $\beta_* = (0,0)$; in this case, $\disc(\beta^\perp) = -4\beta^2 = 2e$ for some $e\equiv 0\pmod{4}$.
\item  $\div(\beta)=2$ and $\beta_* \in \{ (0,1), (1,0)\}$; in this case, $a$ is even and exactly one among  $b$ and~$c$ is odd.\ Therefore,  $\disc(\beta^\perp) = -\beta^2 = 2e$ for $e=-ta^2 + b^2 + c^2\equiv 1\pmod{4}$.
\item $\div(\beta) = 2 $ and $\beta_* = (1,1)$; in this case, $a$ is even, and $b$ and $c$ are odd, hence $\disc(\beta^\perp) = -\beta^2 = 2e$ for $e=-ta^2 + b^2 + c^2\equiv 2\pmod{4}$.
\end{enumerate}
In particular, $\disc(\beta^\perp)$ determines $\beta^2$ and $\beta_*\in \Z/2\Z\times \Z/2\Z$ up to coordinates swap, hence the $O(h^\perp)$-orbit of $\beta$.

{Finally, to compare Heegner divisors of $^{[2]}\cP_{2}^{(1)}$ and $^{[3]}\cP_{4}^{(2)}$, recall from \cite[Proposition~6.3]{riz} that $^{[2]}\cP_{2}^{(1)} = \cD_{h^\perp}/\widetilde O(h^\perp)$ and the morphism $\rho$ is induced by the action of the reflection $r$ on $^{[2]}\cP_{2}^{(1)}$. Hence,  ${}^{[3]}\cD_{4, 2e}^{(2)}$ is the image of $^{[2]}\cD_{2, 2e}^{(1)}$ by $\rho$.}
\end{proof}

 \begin{rema}\label{rema:HegDivisorAssociatedToDiv}
     The previous proof also shows that the primitive vectors $\beta\in h^\perp$ associated to a Heegner divisor $ {^{[3]}\cD_{4,2e}^{(2)}}$ (where   $e\not\equiv 3\pmod{4}$) satisfy the following: 
 \begin{itemize}
         \item if $e\equiv 0\pmod 4$, the vectors $\beta$ have divisibility $1$ and square $e/2$;
         \item if $e\equiv 1,2 \pmod{4}$, the vectors $\beta$ have divisibility $2$ and square $2e$.
  \end{itemize}
 \end{rema}

 \subsection{Image by $\overline\wp_3$ of the divisors $\Delta$, $\Gamma$, and $\Sigma$}\label{sec:images}
 
In this section, we prove that the images of the divisors $\Delta$, $\Gamma$, and $\Sigma$ defined in Section~\ref{sec31}  by the period map 
\begin{equation*} 
\overline\wp_3\colon \LG(\bw3V_6)\smallsetminus  \Sigma'\lra {}^{[3]}\cP_4^{(2)}. 
\end{equation*}
defined in~\eqref{defp3}   are distinct  Heegner divisors.\ For this, we will use Gushel--Mukai varieties and their periods (for   definitions, the reader may consult~\cite{dk1}). 

\begin{prop}\label{prop:imagediv}
    One has
     \begin{equation*}\label{div3}
    \overline{\overline\wp_3(\Delta\smallsetminus \Sigma') }  = {}^{[3]}\cD_{4,10}^{(2)} \quad,\quad
    \overline{\overline\wp_3(\Gamma\smallsetminus \Sigma') }  = {}^{[3]}\cD_{4,12}^{(2)}
    \quad, \quad
        \overline{ \overline\wp_3(\Sigma\smallsetminus \Sigma')}    =  {}^{[3]}\cD_{4,8}^{(2)} 
. 
\end{equation*}
In particular, the divisors $\Delta$, $\Gamma$, and $\Sigma$ are mutually distinct.
\end{prop}

\begin{proof}
    The   images of these divisors in~${}^{[2]}\cP_{2}^{(1)}$ by the period map~$\overline\wp_2$  were determined in \cite[Remark~5.29]{dk1} and they are Heegner divisors:
\begin{equation*}\label{div}
\begin{array}{rcccl}
   \overline{\overline\wp_2(\Delta\smallsetminus \Sigma') } & =& {}^{[2]}\cD_{2,10}^{\prime\prime(1)} &=&\overline{\{\mbox{periods of GM 4folds containing a $\sigma$-plane}\}}\\
     & & &=&\overline{\{\mbox{periods of GM sixfolds containing a $\P^3$}\}},\\
   \overline{\overline\wp_2(\Gamma\smallsetminus \Sigma') } & =& {}^{[2]}\cD_{2,12}^{(1)} &=&\overline{\{\mbox{periods of GM 4folds containing a $\tau$-plane}\}},\\
    \overline{ \overline\wp_2(\Sigma\smallsetminus \Sigma')}  & =& {}^{[2]}\cD_{2,8}^{(1)} &=&\overline{\{\mbox{periods of nodal GM 4folds}\}} 
\end{array}
\end{equation*}
(where $ {}^{[2]}\cD_{2,10}^{\prime\prime(1)}$ is one of the components of the reducible Heegner divisor ${}^{[2]}\cD_{2,10}^{(1)}$).\ To conclude, it is then enough to apply Proposition~\ref{prop31}, the relation~\eqref{ekkm}, and the fact that the Heegner divisors ${}^{[3]}\cD_{4,10}^{(2)}$, $ {}^{[3]}\cD_{4,12}^{(2)}$, and $ {}^{[3]}\cD_{4,8}^{(2)} $ are mutually distinct.
\end{proof}

\end{document}